\documentclass [11pt,reqno]{amsart}
\usepackage {amsmath,amssymb,epsfig,enumerate,verbatim}
\usepackage[all]{xy}



\newcommand{\C}{\mathbf{C}}

\newcommand{\Q}{\mathbf{Q}}
\newcommand{\R}{\mathbf{R}}

\newcommand{\Z}{\mathbf{Z}}
\newcommand{\N}{\mathbf{N}}
\renewcommand{\P}{\mathbf{P}}
\renewcommand{\H}{\mathbf{H}}

\newcommand{\fP}{\mathfrak{P}}

\newcommand{\fX}{\mathfrak{X}}

\newcommand{\fD}{\mathfrak{D}}

\newcommand{\hF}{\hat{F}}

\newcommand{\cA}{\mathcal{A}}

\newcommand{\cE}{\mathcal{E}}
\newcommand{\cF}{\mathcal{F}}

\newcommand{\cO}{\mathcal{O}}
\newcommand{\cV}{\mathcal{V}}
\newcommand{\cL}{\mathcal{L}}
\newcommand{\hcV}{\hat{\mathcal{V}}}
\newcommand{\hX}{\hat{X}}
\newcommand{\hcVqm}{\hat{\mathcal{V}}_{\mathrm{qm},X}}
\newcommand{\cVqm}{\mathcal{V}_{\mathrm{qm},X}}

\newcommand{\Pic}{\mathrm{Pic}\,}

\newcommand{\Ltwo}{\mathrm{L}^2}

\newcommand{\al}{\alpha}
\renewcommand{\a}{\alpha}


\newcommand{\mV}{\mathrm{V}}

\newcommand{\CNS}{\mathrm{C.NS}}
\newcommand{\NS}{\mathrm{NS}}
\newcommand{\Div}{\mathrm{Div}}
\newcommand{\e}{\varepsilon}

\newcommand{\om}{\omega}
\newcommand{\Om}{\Omega}

\newcommand{\fm}{\mathfrak{m}}

\newcommand{\la}{\lambda}

\renewcommand{\=}{:=}

\DeclareMathOperator{\ord}{ord}
\DeclareMathOperator{\pic}{Pic}

\numberwithin{equation}{section}       

\newtheorem{prop} {Proposition} [section]
\newtheorem{thm}[prop] {Theorem} 
\newtheorem{defi}[prop] {Definition}
\newtheorem{lem}[prop] {Lemma}
\newtheorem{cor}[prop]{Corollary}
\newtheorem{prop-def}[prop]{Proposition-Definition}

\newtheorem*{defi*}{Definition} 
\newtheorem*{thmA}{Theorem A} 
\newtheorem*{thmB}{Theorem B} 
\newtheorem*{corC}{Corollary C} 
\newtheorem*{thmD}{Theorem D} 
\newtheorem*{corE}{Corollary E} 
\theoremstyle{remark}

\newtheorem{rmk}[prop]{Remark}
\newtheorem*{rmk*}{Remark}
\newtheorem{qst}{Question}
 
\newtheorem{conj}{Conjecture}

\title{Holomorphic self-maps of singular rational surfaces}

\date{\today}

\author{Charles Favre}
\address{CNRS-Universit{\'e} Paris 7\\
  Institut de Math{\'e}matiques\\
  F-75251 Paris Cedex 05\\
  France\\
fax: +33 1 44 27 78 52
}
\email{favre@math.jussieu.fr}

\subjclass[2000]{Primary: 14J17, Secondary: 32H02}

\begin{document}

\begin{abstract}
We give a new proof of the classification of normal singular surface germs admitting non-invertible holomorphic self-maps and due to J.~Wahl.
We then draw an analogy between the birational classification of singular holomorphic foliations on surfaces, and the dynamics of holomorphic maps. Following this analogy, we introduce the notion of minimal holomorphic model for holomorphic maps. We give sufficient conditions  which ensure the uniqueness of such a model. 
\end{abstract}

\maketitle

\tableofcontents

%
%
%
%
\newpage
\section*{Introduction}

The study of dynamics of holomorphic maps over the complex projective space $\mathbb{P}^n(\C)$ has received a lot of attention since the beginning of the 90's (see the monographs~\cite{etats,japs} or the survey~\cite{sibony} and the references therein). It has been progressively realized that other varieties admit holomorphic maps with positive topological  entropy exhibiting complicated dynamics. For instance, many interesting examples  of invertible maps on surfaces have been recently described and studied~\cite{bedford-kim1,bedford-kim2,diller,cantat-k3,HV,K3mcm,projmcm,takenawa,zhang2}. Meanwhile, after the seminal work of Paranjape and Srinivas~\cite{paran}, an intense activity  grew up in classifying  projective  varieties admitting holomorphic maps~\cite{amerik,vandeven,beauville,cantat-homo,cantat-anosov,cantat,dabija,fuji,fuji1,fuji2,gurjar,naka-zhang,naka,zhang1}. In all the aforementioned works though, except for important contributions of Nakayama and D.-Q. Zhang~\cite{naka-zhang2,zhang3} and the recent thorough classification of endomorphisms on singular surfaces done by Nakayama~\cite{naka2}, varieties were assumed to be \emph{smooth}.
There exist however many examples of non-invertible maps that are holomorphic on a singular space, such as linear maps acting on quotients of Abelian surfaces, or holomorphic maps of weighted projective spaces, see~\cite{naka2} or~\S\ref{s:holo} in this paper for more examples. It is thus natural to try analyzing \emph{holomorphic maps acting on arbitrary singular compact varieties}.

The dynamics of dominant rational maps on $\mathbb{P}^n(\C)$ is also a subject growing rapidly.
A prominent feature of these maps is the existence of an indeterminacy set where they becomes singular, and which makes the analysis of the global dynamics extremely delicate. In particular,
a key step in extending results from the holomorphic to the rational case usually involves controlling the dynamics near this indeterminacy set. One possibility is to impose restrictions on the map near its indeterminacy set, see~\cite{DS1,DS2,dTV,GS}. On the other hand, if we insist on treating the case of an arbitrary rational map $F$, then one is naturally led to construct adapted models in which
$F$ has a relatively small indeterminacy set\footnote{the condition is usually stated in terms of the action of the map on the cohomology of the space and referred to as (algebraic) stability}. Constructing an adapted model is most of the time quite hard, even in concrete examples~\cite{bedford-kim3}. And we have at present very few general results proving the existence of an adapted model, see~\cite{diller-favre,polysmall}.
In any case, it has become clear that \emph{studying the action of a rational map in different models is
of great importance}, see~\cite{deggrowth,HPV,zhang3}.

\smallskip

In the present paper, we shall restrict ourselves to rational maps defined over $\mathbb{P}^2(\C)$ and give a description of the set of models in which a rational map becomes holomorphic. Our results hint at the existence of a birational classification of rational dynamical systems ``\`a la Mori''.
$$
\diamond
$$
In order to motivate our results, we shall first draw an analogy between the classification of holomorphic singular foliations on projective surfaces as developped in~\cite{brunella,mcquillan,mendes}, and the dynamics of rational maps\footnote{it is however not necessary to have any acquaintance with holomorphic foliations to understand our results}.

To make this analogy precise, we need to reformulate slightly some well-known results in the theory of foliations using
the terminology of~\cite{cantat-bir,deggrowth}. We refer to~\S\ref{s:WC} for precise definitions.
Following~\cite{manin}, one considers the space of all classes in the Neron-Severi space of all birational models of $\mathbb{P}^2$. Formally, one can do that in two ways, considering projective limits of classes under push-forward of morphisms, or injective limit of classes under pull-back. In the former case, one obtains the space $\NS(\fP)$ of so-called Weil classes; in the latter case, one gets the space $\CNS(\fP)$ of Cartier classes. Both spaces are infinite dimensional real vector spaces with $\CNS(\fP) \subset \NS(\fP)$. One can intersect a Weil class with a Cartier class and define the notion of pseudo-effective (we write $\al \ge0$ in this case) and nef class.
For later reference, we recall the existence of an intermediate space $\CNS(\fP) \subset \Ltwo(\fP) \subset \NS(\fP)$ on which the natural intersection form on $\CNS(\fP)$ extends, and which is complete for it.

The birational analysis of a foliated projective surface $(X,\cF)$ developed in~\cite{brunella,mcquillan,mendes} is based on the positivity properties of its cotangent line bundle that we denote by $(K_\cF)_X$. Looking at its induced class in all birational models of $X$, we get a Weil class $K_\cF \in \NS(\fP)$.
To define basic birational invariants of $\cF$ such as its Kodaira dimension, one needs to start with a good birational model $(X,\cF)$: formally speaking the foliation needs to have \emph{canonical} singularities (see~\cite{mcquillan})\footnote{by Seidenberg's theorem such a model always exists}. In terms of Weil classes, this means $(K_\cF)_X \le K_\cF$, or  in other words for any nef Cartier class $\al \in \NS(\fP)$,  one has $(K_\cF)_X \cdot \al \le K_\cF\cdot \al$.

It is natural to try to study the birational class of a rational dynamical system $F: \mathbb{P}^2 \dashrightarrow \mathbb{P}^2$ in terms of the positivity properties of the action of $F$ by \emph{pull-back}\footnote{the action of $F$ by pull-back allows one to recover the critical set of $F$ in all birational models} on $\CNS(\fP)$.
By analogy with the foliated case, we may say that $X$ is a good birational model for $F$ if its action  by pull-back on $\NS(X)$ is less positive than its action on $\CNS(\fP)$. In other words, $X$ is a good model if for any nef Cartier class $\al$ one has
$$
F^\sharp \al \= (F^* \al)_X \le F^* \al \text{ in } \CNS(\fP)~.
$$
It is a general fact that $F^\sharp \al \ge F^* \al$ for arbitrary nef Cartier 
classes~\cite[Proposition~1.13]{deggrowth}, so in a good birational model one has $F^\sharp = F^*$ on nef classes hence everywhere on $\NS(X)$.
This is equivalent to saying that $F$ induces a holomorphic map on $X$.
\begin{defi*}
Suppose $F: \mathbb{P}^2 \dashrightarrow \mathbb{P}^2$ is a dominant rational map. 
A (possibly singular) rational surface $X$ is a \emph{holomorphic model} for $F$ if the lift 
of $F$ to $X$ is holomorphic.
\end{defi*}
In the foliated case, once one has found a model with canonical singularities, one can construct many others by considering smooth models dominating it. One is thus lead to the question of
comparing these models, and constructing minimal or canonical models. 
It is a theorem of Brunella~\cite{brunella-minimal} that most foliations admit a unique smooth and minimal model with canonical singularities\footnote{exceptions include rational fibrations, Riccati foliations, and an isolated example}.
In another direction, a theorem of McQuillan~\cite{mcquillan} states that there always exists a model $X$ with cyclic quotient singularities in which $(K_\cF)_X$ becomes nef. One can then go further and either blow-down $X$ to a model with at most quotient or simple elliptic singularities in which $(K_\cF)_X$ becomes an ample line bundle; or prove that $X$ admits an invariant cycle of rational curves contracting to a
cusp singularity.\footnote{we refer to Section~\ref{s:special} for a definition of cusp and simple elliptic singularities}

In the dynamical case, the situation is quite different. Even a simple map like $(x,y) \mapsto (x,xy)$ does not admit any holomorphic model.
We are thus led to the following questions.
\begin{qst}\label{q1}
How do we detect when a rational map $F$ admits a holomorphic model?
\end{qst}
\begin{qst}\label{q2}
Suppose $F$ admits a holomorphic model. Is it possible to find one with only special singularities (like quotient singularities?)
\end{qst}
\begin{qst}\label{q3}
Describe all holomorphic models of a fixed rational map. 
In particular, does there always exist a minimal model? If yes, is it unique?
\end{qst}
Our aim is to give partial answers to these questions. We note that the mere existence of a holomorphic model is particularly restrictive for a rational map $F$. And as mentioned above, a (partial) classification of non-invertible maps admitting a holomorphic model has been carried out by Nakayama in~\cite{naka,naka2}.
One of the ingredients is the control of the singularity of a holomorphic model.
In the invertible case,  it is known though that there exists positive dimensional families of automorphisms on rational surfaces with positive entropy~\cite{bedford-kim1}.
$$\diamond$$
Before presenting our main results, we introduce some notations. If $F: \mathbb{P}^2 \dashrightarrow \mathbb{P}^2$ is a dominant rational map, we denote by $\deg(F)$ the degree of $F^{-1} L$ for a generic projective line $L$; and by $e$ the topological degree of $F$ that is the cardinality of $F^{-1} \{ p \}$ for a generic point $p\in \mathbb{P}^2$.
The sequence $\deg(F^n)$ is sub-multiplicative, thus $\la \= \lim_n \deg(F^n)^{1/n}$ exists. It is called the asymptotic degree of $F$. By Bezout's theorem, it satisfies $\la^2 \ge e$.

When $e < \la^2$, then there exist two unique nef classes $\theta_*, \theta^*$ in $\Ltwo(\fP)$ such that $F^* \theta^* = \la \theta^*$, and $F_* \theta_* = \la \theta_*$, see~\cite{deggrowth}. When $e=\la^2$, the situation is unclear, except in the birational case (see~\cite{diller-favre}) and in the polynomial case (see~\cite{polysmall}). Recall that a skew product is a rational map of $\mathbb{P}^1 \times \mathbb{P}^1$ preserving one of its natural rational fibrations.

Our answer to Question~\ref{q1} can be summarized in the following theorem.
\begin{thmA}\label{thm1}
Let $F: \mathbb{P}^2 \dashrightarrow \mathbb{P}^2$ be any dominant rational map. Suppose that $e< \la^2$, and that $F$ is not birationally conjugated to a skew product.
Then $F$ admits a holomorphic model iff both classes $\theta_*$ and $\theta^*$ are Cartier.
\end{thmA}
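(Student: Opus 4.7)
The plan is to treat the two directions separately. For the forward direction, assume $X$ is a holomorphic model for $F$. Since $F:X\to X$ is then a genuine morphism, the pullback $F^*$ is a well-defined endomorphism of $\NS(X)$ satisfying $(F^n)^*=(F^*)^n$, so its spectral radius on $\NS(X)$ must equal $\la$. The pullback by a morphism preserves the nef cone, and a Perron--Frobenius argument on this cone produces a nef class $\alpha_X\in\NS(X)$ with $F^*\alpha_X=\la\,\alpha_X$. Mapping $\alpha_X$ into $\Ltwo(\fP)$ via the natural embedding $\NS(X)\hookrightarrow\CNS(\fP)$, the uniqueness statement recalled just before the theorem forces $\alpha_X$ to be a positive multiple of $\theta^*$, so $\theta^*$ is Cartier. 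An identical argument applied to $F_*:\NS(X)\to\NS(X)$ shows $\theta_*\in\CNS(\fP)$.

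For the converse, assume $\theta^*,\theta_*\in\CNS(\fP)$. I would start by choosing a common birational model $Y\to\mathbb{P}^2$ on which both classes live in $\NS(Y)$. Since $F^*\theta^*=\la\,\theta^*$ is already Cartier on $Y$, the general inequality $F^\sharp\ge F^*$ on nef classes recalled in the introduction collapses to equality on $\theta^*$, yielding $F^\sharp\theta^*=\la\,\theta^*$ in $\NS(Y)$; analogously for $\theta_*$. Note also that $e<\la^2$ forces $\theta^*\cdot\theta^*>0$, so $\theta^*$ is big and nef on $Y$.

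Next I would contract the irreducible curves $C\subset Y$ with $\theta^*\cdot C=0$. By the Hodge index theorem these curves span a negative-definite sublattice of $\NS(Y)$, so each connected component of their union can be contracted to produce a normal projective rational surface $X$ on which $\theta^*$ descends to a big and nef class. The main claim is then that $F$ lifts to a holomorphic self-map of $X$. I would argue by contradiction: a persistent indeterminacy point of $F$ on $X$ would, after one further blow-up, create a curve $E$ contracted by $F$ to a point with $E\cdot F^*\theta^*>0$, contradicting $F^\sharp\theta^*=F^*\theta^*$ together with the fact that $F^\sharp$ by construction ignores divisors in the critical set mapping to indeterminacy points. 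A symmetric argument with $\theta_*$ eliminates obstructions coming from curves collapsed by $F$, and together the two Cartier conditions leave no room for further indeterminacy --- this is the precise reason both $\theta^*$ \emph{and} $\theta_*$ are needed.

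The hardest step is verifying that the contraction above genuinely produces a \emph{projective} surface $X$ with singularities admissible in the paper's framework, and that $F$ extends holomorphically across them. In particular one must check that each connected component of vertical curves contracts to a rational singularity, and organize the intersection-theoretic computation in the plane $\langle\theta^*,\theta_*\rangle\subset\Ltwo(\fP)$, which is hyperbolic for the pairing (since $e<\la^2$) and jointly invariant under $F^*$ and $F_*$. The hypothesis that $F$ is not a skew product intervenes at this stage to ensure that this invariant plane is genuinely two-dimensional and to rule out fibration-type obstructions to contracting $Y$ to a surface on which $F$ is everywhere defined.
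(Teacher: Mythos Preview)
Your forward direction is essentially fine and close to the paper's Proposition~\ref{p:cartiert*}: the paper uses the convergence $\la^{-n}F^{n*}\theta\to(\theta\cdot\theta_*)\theta^*$ from Theorem~\ref{t:funda} rather than an abstract Perron--Frobenius argument, but either route works once you know $F^*,F_*$ preserve $\NS(X)$.

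The converse direction, however, contains a fatal error. You assert that $e<\la^2$ forces $(\theta^*)^2>0$; in fact the opposite holds. From $F^*\theta^*=\la\theta^*$ together with~\eqref{e:trans} and~\eqref{e:pp2} one computes
\[
\la^2(\theta^*)^2=(F^*\theta^*)^2=\theta^*\cdot F_*F^*\theta^*=e\,(\theta^*)^2,
\]
so $(\la^2-e)(\theta^*)^2=0$ and hence $(\theta^*)^2=0$ (this is stated right after Theorem~\ref{t:funda}). Thus $\theta^*$ is nef but \emph{never} big, and your contraction argument collapses: by Hodge the intersection form on the span $H^*$ of curves orthogonal to $\theta^*$ is only negative \emph{semi}-definite, with kernel generated by $\theta^*$ itself whenever $\theta^*\in H^*_\R$. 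In that case no contraction to a surface is possible.

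This is precisely where the real work of the paper lies (Theorem~\ref{t:contract}). One must split into two cases. If $\theta^*\notin H^*_\R$ the contraction does go through and your holomorphicity argument is close to the paper's Case~1. If $\theta^*\in H^*_\R$, the paper shows $\la$ and $e/\la$ are integers, and then either infinitely many curves are orthogonal to $\theta^*$, in which case some multiple of $\theta^*$ moves in a pencil and defines an $F$-invariant fibration; or only finitely many are, in which case a delicate analysis using Sakai's classification of Halphen-type rational surfaces and the $\theta_*$-invariance rules the situation out entirely. Your proposal contains none of this, and your appeal to a ``hyperbolic plane $\langle\theta^*,\theta_*\rangle$'' is based on the same mistaken sign of $(\theta^*)^2$.

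Finally, the role of the non-skew-product hypothesis is not what you describe. It enters only in the fibration case: if $\theta^*$ is the class of a fiber and $F$ is not a skew product, the generic fiber has genus $\ge1$, the fibration is shown to be isotrivial (Lemma~\ref{lem:isotrivial}), and one then replaces each singular fiber by a local quotient model $X(h,n)$ on which $F$ becomes holomorphic (Proposition~\ref{p:isotrivial}). The hypothesis has nothing to do with the dimension of $\langle\theta^*,\theta_*\rangle$.
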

This result does not hold for skew product, see Section~\ref{s:skew}.

The theorem  was proved in the birational case in~\cite{diller-favre}, and under the assumption $e < \la$ in~\cite{DDG}. The case $e\ge \la$ is much harder, but our proof follows~\cite{DDG} with some arguments taken from~\cite{DJS,cantat}. It  is based on the analysis of the action of $F$ on $\Ltwo(\fP)$, and on the work of Sakai on rational surfaces of Halphen type~\cite{sakai}.
In the case $e=\la^2$, this action is much more complicated to describe. We thus content ourselves to state the following
\begin{conj}\label{c1}
Suppose $e = \la^2$. Then either $F$ is holomorphic in some model, or $F$ preserves a rational fibration, or $F$ is conjugated to a monomial map.
\end{conj}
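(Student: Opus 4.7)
The plan is to mimic the strategy of Theorem~A and of~\cite{diller-favre,polysmall}: construct $F$-invariant nef classes $\theta^*,\theta_*\in \Ltwo(\fP)$, examine whether they are Cartier, and push the resulting dichotomy to an algebraic conclusion. The first step is potential-theoretic. In the setting of Theorem~A one has $e<\la^2$ and the classes $\la^{-n}F^{n*}[L]$ converge geometrically to $\theta^*$ thanks to a spectral gap on $\Ltwo(\fP)$. In the critical regime $e=\la^2$ that gap disappears, so one must replace geometric convergence by Cesaro convergence: set $\theta_N \= N^{-1}\sum_{n=0}^{N-1}\la^{-n}F^{n*}[L]$ and extract a weak-$\Ltwo$ limit, the non-triviality of which would rely on the sharp lower bound $(F^{n*}[L])^2\ge e^n$ combined with $e=\la^2$. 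A symmetric construction yields $\theta_*$.

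The second step is the dichotomy based on whether $\theta^*$ and $\theta_*$ are Cartier. If both are, then the proof of Theorem~A should carry over essentially unchanged: the negative part of $\theta^*-(\theta^*)_X$ is a finite $F^*$-invariant cluster of divisors that can be contracted, and iterating this construction (together with an appeal to Sakai's description of rational surfaces of Halphen type, which is the natural home for the critical case $e=\la^2$) produces a holomorphic model. The genuinely new case is when $\theta^*$ fails to be Cartier; then its ``defect'' lives on an $F^*$-invariant set of non-divisorial valuations in the Riemann--Zariski space of $\mathbb{P}^2$, and this set must be analyzed to extract algebraic information on $F$.

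The third step is the analysis of these invariant non-divisorial valuations. Following the valuative tree formalism of Favre--Jonsson and the techniques of~\cite{polysmall}, I expect that any such valuation is either of \emph{rational quasi-monomial} type, in which case it corresponds to an invariant pencil of rational curves and $F$ preserves a rational fibration; or of \emph{irrational quasi-monomial} type at two analytically transverse branches through a (possibly periodic) point of $F$, in which case $F$ is locally conjugate to a toric map $(x,y)\mapsto(x^ay^b,x^cy^d)$. In the second case one must globalize this local rigidity: promote the formal invariant branches to algebraic curves and extend the local monomial conjugacy to a global birational conjugacy on $\mathbb{P}^2$.

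I expect the main obstacle to be precisely this last globalization. The condition $e=\la^2$ is what forces the local monomial weights $(a,b;c,d)$ to match the global dynamical invariants and should prevent the invariant branches from being transcendental; but turning this heuristic into a rigorous algebraic statement seems to require combining a Diophantine-type analysis of the eigenvalues of the weight matrix with a control of the indeterminacy behavior of $F^n$ along the branches. A secondary difficulty, already absent in~\cite{diller-favre} but visible in~\cite{polysmall}, is the asymmetry between $\theta^*$ and $\theta_*$: it is conceivable that only one of them fails to be Cartier, and handling this case likely requires an intertwining argument beyond what currently exists in the literature.
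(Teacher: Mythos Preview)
The statement you are attempting to prove is labeled in the paper as a \emph{Conjecture}, not a theorem. The paper gives no proof; immediately after stating it, the author only remarks that ``the conjecture is true in the birational case by~\cite{diller-favre} and in the polynomial case by~\cite{polysmall}'', and that the classification of holomorphic maps with $e=\la^2$ on rational surfaces is still open. There is therefore no proof in the paper to compare your proposal against.

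Your outline is a sensible plan of attack and is in the spirit of the methods the paper uses for Theorem~A, but you yourself correctly identify that it is incomplete. Two points deserve emphasis. First, the construction of $\theta^*,\theta_*$ by Ces\`aro averaging is not established in the paper for $e=\la^2$: Theorem~\ref{t:funda} (which underlies Theorem~A) explicitly assumes $e<\la^2$, and the paper states that ``when $e=\la^2$, the situation is unclear, except in the birational case and in the polynomial case''. So already your first step would be new mathematics rather than a transcription of existing arguments. Second, the globalization step you flag---promoting a locally monomial description at an invariant irrational quasi-monomial valuation to a global birational conjugacy to a monomial map---is precisely the heart of the conjecture and is not addressed by any result in this paper; the partial results in~\cite{polysmall} use the polynomial hypothesis in an essential way (the valuation space at infinity is a tree, and the line at infinity provides a canonical totally invariant divisor). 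In short, your proposal is a reasonable roadmap, but it is a roadmap toward an open problem, not a proof, and the paper does not claim otherwise.
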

The conjecture is true in the birational case by~\cite{diller-favre} and in the polynomial case by~\cite{polysmall}. In the latter case, the model in which $F^2$ becomes holomorphic is a weighted projective space. Note also that classification of holomorphic maps with $e= \la^2$ on rational surfaces is still open, see~\cite[\S 9]{naka2}.

\medskip

Next, we deal with Question~\ref{q2}.
The classification of normal surface germs $(X,0)$ which admit \emph{finite} holomorphic self-maps that are not invertible was obtained by J.~Wahl~\cite{wahl}. We give here an independent and more dynamical proof of his result.
The notion of klt (for Kawamata log-terminal) and lc (for log-canonical) singularities is recalled in Section~\ref{s:singular}. In dimension $2$, klt singularities are quotient singularities, and lc singularities correspond to minimal elliptic singularities, cusp singularities and their finite quotients, see~\cite{kawamata}.
\begin{thmB}[\cite{wahl}]\label{thm3}
Suppose $(X,0)$, $(Y,0)$ are two normal surface germs and $F: (X,0) \to (Y,0)$ is a finite holomorphic germ
which is not invertible. Denote by $JF$ the Weil divisor defined by the vanishing of
the Jacobian determinant of $F$.
\begin{enumerate}
\item
Suppose $(X,0)$ is klt. Then $(Y,0)$ is klt.
\item
Suppose $(X,0)$ is lc. Then $(Y,0)$ is lc.
\item
Suppose $X =Y$ and $JF$ is non empty. Then $(X,0)$ is klt.
\item
Suppose $X =Y$  and $JF$ is empty. Then $(X,0)$ is lc but not klt.
\end{enumerate}
\end{thmB}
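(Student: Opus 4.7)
These follow from a log-discrepancy formula under finite maps. The plan: take a log resolution $\pi_Y:\tilde Y\to Y$, lift through the normalization of $X\times_Y\tilde Y$ (with further resolution if necessary) to a compatible $\pi_X:\tilde X\to X$ together with a finite morphism $\tilde F:\tilde X\to\tilde Y$. Combining the ramification identity $K_{\tilde X}=\tilde F^*K_{\tilde Y}+R_{\tilde F}$ (with $R_{\tilde F}\ge 0$) with the discrepancy expansions of $K_{\tilde X}, K_{\tilde Y}$ and the relation $\tilde F^*\pi_Y^*K_Y=\pi_X^*(K_X-R_F)$, one obtains for every exceptional divisor $E$ over $X$ mapping to $E'$ over $Y$ with ramification index $e_E$:
\[
e_E\bigl(1+a_Y(E')\bigr)=\bigl(1+a_X(E)\bigr)+v_E(R_F),
\]
where $v_E(R_F)\ge 0$ is the order of $R_F$ along $E$. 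Since $\tilde F$ is finite and surjective, every $E'$ over $Y$ has a preimage $E$ on $\tilde X$; the klt ($1+a>0$) resp.\ lc ($1+a\ge 0$) condition for $X$ then transfers to $Y$.

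\textbf{Part (4).} Here $X=Y$ and $R_F=0$, so the formula reduces to $v(E)=e_E\,v(\tilde F(E))$ with $v:=1+a_X$. To rule out $X$ klt (in particular smooth), I would use the topology of the link: a klt normal surface germ has link $L=S^3/G$ with $G$ finite, and an étale self-map of $X\setminus\{0\}$ of degree $d>1$ would produce a self-covering $L\to L$ of degree $d$, embedding the finite group $G=\pi_1(L)$ as a proper subgroup of itself of finite index — impossible. To show that $X$ must be lc, suppose some divisor $E_0$ on a high model satisfies $v(E_0)<0$; the orbit $E_n=\tilde F^n(E_0)$ retains $v(E_n)=v(E_0)/\prod_{i<n}e_{E_i}<0$. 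Working on a suitable $\tilde F$-invariant model, the finiteness of divisors with $v<0$ and bounded absolute value forces eventual periodicity, and the periodic form $(e^p-1)v(E)=0$ then forces either $v(E)=0$ (contradiction) or $e^p=1$ along the orbit, which one excludes by combining with the structure of non-lc surface singularities.

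\textbf{Part (3).} Now $R_F\ne 0$ and $X=Y$, so the formula reads $e_E v(\tilde F(E))=v(E)+v_E(R_F)$. Assume for contradiction that $X$ is not klt. If $X$ is lc but not klt, lc places (with $v(E)=0$) exist and form a finite set on the minimal resolution, so $\tilde F$ generates a periodic orbit $E_0\to\cdots\to E_p=E_0$ of lc places. Applying the formula for $\tilde F^p$ along the orbit and using the identity $R_{F^n}=\sum_{i<n}(F^i)^*R_F\ge R_F$, one derives $v_E(R_F)=0$ for every $E$ in the orbit, and a straightforward extension gives $v_E(R_F)=0$ for every exceptional $E$ of the minimal resolution. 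But for lc-not-klt normal surface germs the exceptional fiber of the minimal resolution consists entirely of lc places (a simple elliptic curve, or a cycle of rational curves); since $R_F$ is a nonzero effective Weil divisor through $0$, its divisorial valuation must be strictly positive along at least one such component — contradiction. The case $X$ not lc is handled by the same propagation-of-negativity argument as in part (4). The main obstacle throughout is the careful dynamical analysis of the $\tilde F$-orbits on the set of divisorial valuations and its interaction with the ramification divisor $R_F$, which requires choosing resolutions adapted to the dynamics and exploiting the finiteness of exceptional divisors with given log discrepancy.
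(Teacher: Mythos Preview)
Your treatment of parts (1) and (2) is essentially the paper's argument phrased divisorially: the identity $e_E(1+a_Y(E'))=(1+a_X(E))+v_E(R_F)$ is the Jacobian formula $A(F_*\nu)=A(\nu)+\nu(JF)$ specialized to $\nu=\ord_E$, and surjectivity of $F_*$ on valuations transfers the sign condition.

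Your argument for part (3), however, is both far more complicated than necessary and contains a genuine error. The paper's proof is one line: set $A_0=\min_{\cV_X}A$ and $N_0=\min_{\cV_X}\nu(JF)>0$ (positivity because $JF$ is a nonzero effective Weil divisor), iterate the Jacobian formula to get $A(F^n_*\nu)\ge A_0+nN_0$, and conclude by surjectivity of $F^n_*$. Your route instead assumes $X$ lc but not klt and tries to force $v_E(R_F)=0$ along all exceptional components of the minimal resolution. The claim that ``for lc-not-klt normal surface germs the exceptional fiber of the minimal resolution consists entirely of lc places'' is false: it holds for simple elliptic and cusp singularities, but lc singularities also include their finite quotients, whose minimal-resolution dual graph is a tree in which only the branch point(s) are lc places (this is exactly case (3) of Proposition~\ref{p:class}). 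So your contradiction does not go through in those cases.

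Part (4) has more serious gaps. First, there is in general no ``$\tilde F$-invariant model'': a finite non-invertible self-map of a singularity need not lift holomorphically to any resolution (the whole point of Theorem~B is to constrain when this can happen). Second, the finiteness claim ``finitely many divisors with $v<0$ and bounded absolute value'' is not justified, and even granting eventual periodicity with all ramification indices equal to $1$ along the orbit, you give no argument to exclude this. The paper avoids orbit-chasing entirely: it works on the compact set $\cA_0=\{A=A_0\}$ of normalized valuations where the minimum is attained, uses that $F_\bullet^{-1}(\cA_0)\subset\cA_0$ with $c(F,\cdot)\equiv 1$ there, and then exploits the dual divisors $Z_\nu$ together with the uniform comparison $c_0 Z_{\nu_0}\le Z_\nu\le c_1 Z_{\nu_0}$ on $\cA_0$ (Proposition~\ref{p:depend}) and the relations $F_*F^*=e$ and $F^*Z\cdot W=Z\cdot F_*W$ to produce a two-sided inequality forcing $e=1$. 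This intersection-theoretic step is the genuinely new idea and is what your sketch is missing.
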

Our method is based on looking at the action of $F$ on the space of all valuations on the local ring $\cO_X$, and on the thinness function\footnote{in the terminology of~\cite{ambro}, it is called the log-discrepancy, see Remark~\ref{rem: ambro} below} which loosely speaking measures the order of vanishing of a holomorphic two-form along exceptional prime divisors in any model. Our approach follows closely~\cite{valtree,eigenval}. Most of the material extends to higher dimension (see~\cite{hiro}).
In particular, the first three statements above hold  for finite maps between $\Q$-Gorenstein singularities of any dimension.
However the fourth statement relies on deeper properties of the valuation space that are not yet available in higher dimensions.
Of course, one would also like to remove the $\Q$-Gorenstein assumption in general. We refer to~\cite{defernex} for some ideas for dealing with this difficulty in a related context.
\begin{rmk*}
If one removes the assumption that $F$ is finite, the result clearly no longer holds. For any germ of singular surfaces $(X,0)$, $(Y,0)$ there exists a non finite holomorphic map $F: (X,0) \to (Y,0)$ with maximal generic rank. To see that, pick any resolution $\pi: \hat{Y} \to Y$, any point $p\in\pi^{-1} \{ 0 \}$
and a local biholomorphism $\phi$ from $(\C^2,0)$ to a local neighborhood of $p$ in $\hat{Y}$. Take any
map $h$ of maximal rank from $X$ to $(\C^2,0)$, and set $F = \pi \circ \phi\circ h$.
\end{rmk*}
As a consequence of Theorem~B above, we shall prove
\begin{corC}\label{thm2}
 Suppose $F: \mathbb{P}^2 \dashrightarrow \mathbb{P}^2$ is a dominant rational map
which is holomorphic in some (possibly singular) model of $\mathbb{P}^2$. Then there exists a holomorphic model $X$ for $F$ with quotient or cusp singularities.
\end{corC}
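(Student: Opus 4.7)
The plan is to start from any holomorphic model $Y$ provided by hypothesis and to birationally modify it, using Theorem~B as the local input, until only quotient or cusp singularities remain.

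First, I would arrange that every singularity of $Y$ is log-canonical. Let $N\subseteq Y$ denote the (finite) non-lc locus. At every $y\in Y$ where the germ $F\colon(Y,y)\to(Y,F(y))$ is finite, Theorem~B(2) gives that $(Y,y)$ lc implies $(Y,F(y))$ lc; equivalently $F^{-1}(N)\subseteq N$ outside the finite union of $F$-contracted curves. Combined with $|N|<\infty$ and the surjectivity of $F$, every $p\in N$ off a contracted curve must be $F$-periodic of some period $k$. After replacing $k$ by a multiple so that the ramification divisor of $F$ (nonempty since $e>1$) meets the orbit of $p$, the germ $F^{k}\colon(Y,p)\to(Y,p)$ is a finite, non-invertible self-map germ, and Theorem~B(3)--(4) forces $(Y,p)$ to be lc, contradicting $p\in N$. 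Residual non-lc points lying on $F$-contracted curves are removed by explicitly contracting those curves, producing a holomorphic model of $F$ whose singularities are all log-canonical.

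Next I would invoke Kawamata's dimension-two classification: the lc singularities are quotient (klt), simple elliptic, cusp, or finite quotients of the last two. To conclude, one must eliminate simple elliptic singularities (and their small quotients that are neither quotient nor cusp). At a periodic simple elliptic singularity $p$ of period $k$, the lift of $F^{k}$ to the minimal resolution $\pi\colon\tilde Y\to Y$ sends the exceptional smooth elliptic curve $E_{p}$ to itself and acts there as an endomorphism. An $\tilde F^{k}$-equivariant birational modification of $\tilde Y$---built by blowing up suitable torsion points of $E_{p}$ with respect to the group structure compatible with $\tilde F^{k}|_{E_{p}}$ and then contracting the resulting $\tilde F^{k}$-invariant cycle of rational curves---produces a new model $X$ in which $p$ is replaced by a cusp singularity; iterating for each remaining offending singularity, which there are only finitely many of, yields the desired holomorphic model with only quotient and cusp singularities.

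The main obstacle is the second step: turning simple elliptic singularities into cusps by an $F$-equivariant modification is delicate because one has to (i) verify that the birational modification preserves the holomorphicity of $F$ (which is not automatic for arbitrary blow-ups of a holomorphic model) and (ii) understand the endomorphism induced by $\tilde F^{k}$ on the exceptional elliptic curve in enough detail to arrange the blow-ups and contractions compatibly. The first step also has a subsidiary technical point: guaranteeing that $F^{k}$ is locally non-invertible at each periodic non-lc point forces iterating $F$ so that its ramification divisor meets every relevant orbit, which uses $e>1$ and the finiteness of the singular locus.
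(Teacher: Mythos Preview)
Your first step has the right shape, but the non-invertibility argument is flawed. The claim that one can replace $k$ by a multiple so that the ramification divisor meets the orbit of $p$ is unjustified: $JF$ is a curve and the orbit is a finite set of points, and nothing forces them to intersect. The correct fix is simpler: once you know $F^{-1}(N)\subseteq N$ and every point of $N$ is periodic, pass to an iterate making all of $N$ fixed; then for $p\in N$ each preimage lies in $N$, is fixed, and maps to $p$, hence equals $p$. Thus $p$ is totally invariant and the local degree of $F$ at $p$ equals the global topological degree, which is $>1$ in the non-birational case (the birational case being trivial via the minimal resolution). The paper's route here is more direct: if a singular point $p$ is not totally invariant then some \emph{smooth} point maps to it and Theorem~B(1) gives $(X,p)$ klt at once; only totally invariant singular points require Theorem~B(3)--(4). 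Note also that a surjective holomorphic self-map of a normal projective surface is automatically finite, so your discussion of contracted curves is unnecessary.

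Your second step contains a genuine gap and goes in the wrong direction. Blowing up points on the exceptional elliptic curve $E_p$ produces $E_p$ (with decreased self-intersection) together with attached rational $(-1)$-curves; such a configuration is never a cycle of \emph{rational} curves, so it cannot be contracted to a cusp. The paper instead modifies lc non-cusp singularities toward \emph{quotient} (klt) singularities. At a simple elliptic point this is immediate: the minimal resolution is smooth and $F$ lifts holomorphically to it, because the image of an indeterminacy point is always a rational curve and hence cannot be the elliptic $E_p$. The substantive case---which your proposal does not address---is when $(X,p)$ is a nontrivial finite quotient of a simple elliptic or cusp singularity (case~(3) of Proposition~\ref{p:class}). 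This is the content of Proposition~\ref{p:lift}: using the Jacobian formula and an intersection-theoretic argument with the dual divisors $Z_\nu$, one shows that the branch points of the dual graph $\Gamma(\mu)$ are totally invariant under $F_\bullet$; blowing down all exceptional components except those corresponding to the branch points then yields a model with only cyclic quotient singularities on which $F$ remains holomorphic.
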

In fact, as a corollary of his classification, Nakayama proved that one can rule out the case where the singularities are cusp, see~\cite[Theorem~1.4]{naka2}.

Note that in the birational case, because any normal singularity admits a minimal desingularization (see~\cite{laufer}), then one can actually find a \emph{smooth} holomorphic model.
In Section~\ref{s:holo}, we present a number of examples of non-invertible holomorphic maps preserving klt or lc singularities.

\medskip

Finally, we answer Question~\ref{q3} in a special case.
Recall that a model is a rational (singular) surface together with a bimeromorphic map
 $X \dashrightarrow \mathbb{P}^2$ defined up to isomorphism.
\begin{thmD}[Uniqueness of minimal model]
Suppose $F: \mathbb{P}^2 \dashrightarrow \mathbb{P}^2$ is a rational dominant map which is holomorphic in some model and satisfies $e< \la^2$. Then there exists a holomorphic model $X_0$ for $F$ such that for any other holomorphic model $X$, the natural map $X \to X_0$ is regular.
\end{thmD}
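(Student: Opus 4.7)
The plan is, given any holomorphic model $X$ of $F$, to construct a canonical birational contraction $\pi_X \colon X \to X_0(X)$ by collapsing the null locus of the nef and big Cartier class $\eta_X \= \theta^*|_X + \theta_*|_X$, and then to establish that $X_0(X)$ is independent of $X$.

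Pick $X$ a holomorphic model of $F$; by Corollary~C we may assume $X$ has at worst quotient singularities, so $X$ is projective and klt. The classes $\theta^*|_X, \theta_*|_X \in \NS(X)$ are nef Cartier, appearing as eigenvectors of $F^*$ and $F_*$. The intersection identities $(F^*\alpha)^2 = e\,\alpha^2$ with $e < \la^2$ force $(\theta^*)^2 = (\theta_*)^2 = 0$, and $\theta^* \cdot \theta_* > 0$, since otherwise both isotropic nef classes would be proportional to the class of an $F$-invariant rational fibration, contradicting the skew-product exclusion of Theorem~A. Hence $\eta_X$ is nef and big, and the contraction-to-ample-model theorem on the klt projective surface $X$ produces a birational morphism $\pi_X \colon X \to X_0(X)$ onto a normal projective surface, collapsing exactly the null locus
\[
N(X) \;=\; \set{C \text{ irreducible}\,:\, \theta^*\cdot C = \theta_*\cdot C = 0}.
\]
The projection formula $F_*F^* = e\,\id$ combined with $F^*\theta^* = \la\theta^*$ and $F_*\theta_* = \la\theta_*$ yields the dual identities $F_*\theta^* = (e/\la)\theta^*$ and, symmetrically, $F^*\theta_* = (e/\la)\theta_*$. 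Both $F^*\eta_X$ and $F_*\eta_X$ therefore lie in the cone $\R_{\geq 0}\theta^* + \R_{\geq 0}\theta_*$, and the null locus $N(X)$ is totally $F$-invariant. Consequently $\pi_X \circ F$ contracts $N(X)$, factors through $\pi_X$, and Zariski's main theorem promotes the induced rational factorization to a morphism $F_0 \colon X_0(X) \to X_0(X)$.

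To show $X_0(X)$ is independent of $X$, take two holomorphic models $X, X'$, form a common bimeromorphic resolution $Y$ dominating both, and iteratively blow up the indeterminacy points of the induced rational self-map $F_Y \colon Y \dashrightarrow Y$ until they vanish. The resulting $\tilde Y$ is a holomorphic model of $F$ dominating both $X$ and $X'$. Write $\sigma \colon \tilde Y \to X$, $\sigma' \colon \tilde Y \to X'$ for the dominating morphisms. By functoriality $\eta_{\tilde Y} = \sigma^* \eta_X = \sigma'^* \eta_{X'}$; since the exceptional curves of $\sigma$ lie automatically in $N(\tilde Y)$, one obtains $N(\tilde Y) = \sigma^{-1}(N(X)) = \sigma'^{-1}(N(X'))$, so that the morphism $\pi_{\tilde Y}$ factors through both $\pi_X \circ \sigma$ and $\pi_{X'} \circ \sigma'$. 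This gives canonical isomorphisms $X_0(\tilde Y) \cong X_0(X) \cong X_0(X')$, and the composition $X' \to X_0(X') \cong X_0(X)$ is the regular map required by the theorem.

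The principal obstacle is the construction of the common holomorphic dominating model $\tilde Y$: a naive common resolution $Y$ may carry a non-holomorphic lift $F_Y$, and the iterative blow-up procedure to resolve its indeterminacy is only guaranteed to terminate via an algebraic-stabilization argument parallel to the one at the core of Theorem~A, where the strict inequality $e < \la^2$ is essential. A secondary technical point is the identity $F^*\theta_* = (e/\la)\theta_*$ when $F \colon X \to X$ is not a finite morphism; there $F^* F_* = e\,\id$ fails by an error term supported on $F$'s contracted divisors, but a projection-formula computation parallel to those at the heart of Theorem~B shows that these contracted divisors themselves lie in $N(X)$, so the error term does not affect the vanishing of $\eta_X$ on curves in $N(X)$.
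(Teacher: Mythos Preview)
Your overall strategy --- contract the null locus of a canonical nef and big class on each holomorphic model, then show the contractions all agree --- is sound and close in spirit to the paper's. But the proof has a genuine gap at exactly the point you flag as the ``principal obstacle'': producing a holomorphic model $\tilde Y$ dominating two given ones $X$ and $X'$.

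Your proposed fix, iteratively blowing up indeterminacy of $F_Y$ on a common resolution $Y$, is not known to terminate, and invoking ``algebraic stabilization parallel to Theorem~A'' is a red herring. Algebraic stability is the condition $(F^n)^\# = (F^\#)^n$, which is strictly weaker than holomorphicity; Theorem~A does not produce a holomorphic model by blow-ups but by contracting curves orthogonal to $\theta^*$, and this uses hypotheses (Cartier-ness of both classes, non-skew-product) you cannot assume for the lift to $Y$. The paper instead has a one-line construction (Lemma~\ref{l:induct}): take $X''$ to be the graph of the birational map $X' \dashrightarrow X$ inside $X' \times X$. The map induced by $F$ on $X''$ is then the restriction of the product map $(F_{X'}, F_X)$, which is holomorphic because each factor is. This gives the common dominating holomorphic model directly, with no induction and no appeal to $e<\la^2$.

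Your ``secondary technical point'' is also resolved more simply than you suggest. On a holomorphic model $X$, $F_\#$ is injective on $\NS(X)$ (since $e^{-1/2}F^\#$ is an isometry, it is bijective; dualize). From $F_\# F^\# = e\,\id$ one gets $F_\#(F^\#\theta_*) = e\theta_* = F_\#((e/\la)\theta_*)$, whence $F^\#\theta_* = (e/\la)\theta_*$. No projection-formula computation with contracted divisors is needed.

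For comparison, the paper's own proof is less symmetric: it contracts only the curves orthogonal to $\theta^*$ (not to $\theta^*+\theta_*$), using Theorem~\ref{t:contract} to know the contraction is holomorphic. This forces a case split according to whether $\theta^*$ is the class of a fibration, which your approach avoids --- so once the graph lemma is plugged in, your argument is arguably cleaner. A minor imprecision: Corollary~C only gives quotient \emph{or cusp} singularities, but on a normal surface you don't need klt to contract the null locus of a nef and big class; Grauert's criterion suffices.
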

When $F$ is not invertible, one can rely on the classification of non-invertible endomorphisms of rational surfaces with $e<\la^2$ given in \cite[\S 9]{naka2}, and deduce that $X_0$ has only quotient singularities. Theorem~B only implies that the singularities of $X_0$ are lc.

On the other hand, when $F$ is invertible, it automatically admits many holomorphic models, as one may blow-up any periodic cycle. Theorem~D is thus particularly interesting in this case. 
By~\cite{DJS} the singularities of $X_0$ are then either rational or elliptic Gorenstein. 
Let us state the following amusing corollary of the previous theorem.
\begin{corE}
Under the same assumption of the previous theorem, either $F$ preserves a fibration, or it admits no invariant curve on its minimal model.
\end{corE}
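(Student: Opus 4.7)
The plan is to derive a contradiction from the hypothetical existence of an invariant curve by contracting it to produce a strictly smaller holomorphic model, violating the minimality asserted by Theorem~D. Suppose therefore that $F$ does not preserve a fibration yet admits an invariant curve $E = C_1\cup\cdots\cup C_r \subset X_0$ with irreducible components $C_i$. The finite-dimensional subspace $V = \R\langle [C_1],\ldots,[C_r]\rangle \subset \NS(X_0)$ is $F_*$-invariant, since $F$ permutes the $C_i$ up to integer multiplicities (with the convention $F_*[C_i] = 0$ if $C_i$ is contracted).

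The heart of the argument is to show that $E$ has negative-definite intersection form. By Theorem~A the nef Perron classes $\theta^*,\theta_*\in\NS(X_0)$ are Cartier and satisfy $F^*\theta^* = \la\theta^*$, $F_*\theta_* = \la\theta_*$; combined with $(F^*\alpha)^2 = e\,\alpha^2$ (valid since $F$ is holomorphic of topological degree $e$), the assumption $e<\la^2$ forces $(\theta^*)^2 = (\theta_*)^2 = 0$. Let $\mu$ denote the spectral radius of $F_*|_V$. If $\mu = \la$, then Perron--Frobenius provides a nonzero nonnegative eigenvector $v = \sum v_i[C_i]\in V$ with $F_* v = \la v$; the Perron ergodic convergence $F_*^n\alpha/\la^n\to c_\alpha\theta_*$ in $\Ltwo(\fP)$ from \cite{deggrowth} then forces $v\propto\theta_*$, so $v$ is nef with $v^2 = 0$, and Hodge index applied to the nef isotropic classes $v$ and $F^*v$ (which has $(F^*v)^2 = 0 = F^*v\cdot v$) shows $F^*v\propto v$, so the fibration defined by $|m v|$ is $F$-invariant---contradicting the hypothesis. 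Hence $\mu < \la$, so $\la$ is not an eigenvalue of $F_*|_V$; the functional $\alpha\mapsto\alpha\cdot\theta^*$ on $V$, which satisfies $(F_*\alpha)\cdot\theta^* = \la(\alpha\cdot\theta^*)$ by projection formula, therefore vanishes identically, giving $[C_i]\cdot\theta^* = 0$ for every $i$. Hodge index applied to the nef isotropic $\theta^*$ then yields $[C_i]^2 \le 0$, with equality only when $[C_i]\propto\theta^*$, which once more produces an $F$-invariant fibration. A parallel Perron argument on the non-positive matrix $(C_i\cdot C_j)$ rules out any semidefinite degeneracy: a nonnegative null vector would give an effective class of zero self-intersection proportional to $\theta^*$, hence a fibration. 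The intersection matrix is therefore negative definite.

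By Grauert's contractibility criterion one obtains a birational morphism $\pi\colon X_0 \to X_0'$ to a normal projective surface contracting $E$. Since $F(E)\subset E$, the composition $\pi\circ F$ factors through $\pi$ to a self-map $F'$ of $X_0'$ holomorphic off the contracted points; Riemann extension on the normal surface $X_0'$ promotes $F'$ to a global holomorphic self-map, making $X_0'$ a holomorphic model for $F$. Theorem~D now supplies a regular map $X_0' \to X_0$, which together with $\pi$ yields birational morphisms in both directions between normal projective surfaces; these must be mutually inverse isomorphisms, contradicting the fact that $\pi$ contracts a nontrivial curve. The most delicate step of the plan is the treatment of the case $\mu = \la$ in the second paragraph, resting crucially on the uniqueness (up to scalar) of $\theta_*$ as the nef $\la$-eigenvector of $F_*$ in $\Ltwo(\fP)$ established in \cite{deggrowth}.
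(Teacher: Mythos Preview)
Your argument reaches the right conclusion but takes a long detour around the paper's two-line proof and acquires real gaps along the way. In the paper, ``invariant curve'' means $F_*C=C$, and then the projection formula gives immediately $\lambda(C\cdot\theta^*)=C\cdot F^*\theta^*=F_*C\cdot\theta^*=C\cdot\theta^*$, hence $C\cdot\theta^*=0$ since $\lambda>1$. One then simply invokes Theorem~\ref{t:contract}: either $\theta^*$ induces an $F$-invariant fibration, or every curve orthogonal to $\theta^*$ can be contracted to a smaller holomorphic model, contradicting minimality of $X_0$. Your spectral-radius discussion of $F_*|_V$ is unnecessary under this reading (indeed, a nonnegative matrix with a strictly positive eigenvector of eigenvalue $1$ has spectral radius $1<\lambda$, so the case $\mu=\lambda$ never occurs).

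More seriously, the places where you write ``hence a fibration'' are exactly where the paper's hard work lives. An effective nef class of square zero does \emph{not} automatically yield a pencil: this is the content of Step~2 in the proof of Theorem~\ref{t:contract}, which splits into case~2a (infinitely many orthogonal curves, giving a genuine fibration) and the delicate case~2b (finitely many, handled via Sakai's classification of Halphen-type surfaces and a topological degree argument to reach a contradiction with $e<\lambda^2$). Your ``nonnegative null vector $\Rightarrow$ proportional to $\theta^*\Rightarrow$ fibration'' and ``$|mv|$ defines a fibration'' both skip this analysis. Similarly, your justification of $(\theta_*)^2=0$ is incomplete: the identity $(F^*\alpha)^2=e\,\alpha^2$ applies to $\theta^*$ because $F^*\theta^*=\lambda\theta^*$, but you have no a priori relation $F^*\theta_*=c\,\theta_*$. (It is in fact true on $X_0$: injectivity of $F_*$ on $\NS(X_0)$ forces $F^*\theta_*=(e/\lambda)\theta_*$, whence $(e/\lambda)^2(\theta_*)^2=e(\theta_*)^2$ and $(\theta_*)^2=0$; but this step is missing.) The cleanest repair is simply to cite Theorem~\ref{t:contract} once you know $C\cdot\theta^*=0$, rather than reproving its negative-definiteness and contraction content by hand.
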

Indeed if $F_*(C) = C$ for some curve then we would have $\la ( C \cdot \theta^*) = C \cdot \theta^*$, and one could either contract $C$ or find an invariant fibration by applying Theorem~\ref{t:contract} below.
This corollary can be seen as a discrete dynamical version of Jouanolou's theorem which states that
a holomorphic foliation which has infinitely many invariant compact curves must be a fibration.
We refer to the recent preprint~\cite{cantat-fib} for more general results in this direction.

\medskip

We would like to conclude this introduction by stating the following
\begin{qst}
Does there exist an algorithm for checking if $F$ admits a holomorphic model?
Same question for knowing when $F$ preserves a fibration.
\end{qst}
In the birational case, the question reduces to build an algorithm for constructing a model 
in which no contracted curve by $F$ is eventually mapped to a singular point (see~\cite{diller-favre}). 

$$\diamond$$
The paper is divided into three sections. The first one is devoted to the necessary material on valuation spaces of normal surface singularities. The presentation is quite sketchy as most 
of the theory is a direct generalization of the smooth case that we already developed in great detail in~\cite{valtree} with M.~Jonsson. In the second section, we prove Theorem~B and Corollary~C and describe examples of holomorphic self-maps both in a local context and on rational singular surfaces.
The third and last part is of a more global nature. We prove there a refined version of Theorem~A on the characterization of holomorphic maps, see Theorem~\ref{t:contract}.  We then prove Theorem~D.

$$\diamond$$

\noindent {\bf Acknowledgements:}
it is a pleasure to thank my collaborator and friend Mattias Jonsson to whom I owe so much. The material presented here is a natural extension of our joint work on valuation spaces.
I also thank Professors Nakayama and Wahl for letting me know about their papers~\cite{naka2} and~\cite{wahl} respectively, and Sebastien Boucksom and Jeffrey Diller for many valuable comments.

\section{Normal singular surfaces}\label{s:singular}
Our aim is to extend the results of~\cite{valtree} to normal surface singularities. To deal with general non $\Q$-Gorenstein singularities, we use Mumford's numerical pull-back of Weil divisors on surfaces.

\subsection{The valuation space of a normal surface singularity}\label{s:valuation}
~
\medskip

\noindent \emph{Basics}.
Let $(X,0)$ be a germ of \emph{normal} surface singularity. We denote by $\cO_X$ the ring of holomorphic germs defined on $(X,0)$. It is a local ring with maximal ideal $\fm_X\= \{ f\in\cO_X,\, f(0) =0 \}$. Following~\cite{valtree}, we introduce the set $\hcV$ of all valuations $\nu: \cO_X \to \R_+\cup\{+\infty\}$ which are trivial on $\C$, non identically $+\infty$ and positive on $\fm_X$. It is a cone over the subset $\cV_X$ of valuations normalized by the condition $\nu(\fm_X) \= \min \{ \nu(f), \, f \in \fm_X \} = +1$. With the topology of the pointwise convergence, $\cV_X$ is compact and $\hcV_X$ is locally compact.

A morphism $\mu: \hX \to X$ is called a resolution if $\hX$ is smooth and $\mu$ is a regular bimeromorphism.  A resolution $\mu'$ dominates $\mu$ if the map 
$\mu^{-1}\circ \mu'$ is regular.
Any germ of normal surface singularity admits a unique minimal resolution $\mu_0 : \hX_0 \to X$ characterized by the fact that any other resolution $\mu$ dominates $\mu_0$, see~\cite{laufer}.
A resolution for which $\mu^{-1} (0)$ has only normal crossing singularities,  all its irreducible components are smooth, and the intersection of any two components is a single point, is called a \emph{good resolution}. Any resolution is dominated by a good resolution. As we shall see below good resolutions are adapted to
interpret dual graphs as subsets of $\cV_X$.

A valuation $\nu$ is \emph{divisorial} if one can find a resolution $\mu$, a prime divisor $E \subset \mu^{-1} (0)$, and a positive real number $c>0$ such that $\nu(f) = c \times \ord_E(f \circ \mu)$ for any $f \in \cO_X$. In this case we shall write $\nu = c\times \mu_* \ord_E$. When we want to emphasize that $\nu$ is associated to the prime divisor $E$, we write $\nu = \nu_E$. With this notation, we always assume $\nu_E$ to be normalized by $\nu_E(\fm_X) = +1$.
To any divisorial valuation $\nu = \nu_E$ is attached a positive integer $b(\nu) = b_E$ called the \emph{generic multiplicity}. It is defined by $b_E \= \ord_E (\mu^* \fm_X)$, so that the valuation $b_E^{-1} \mu_* \ord_E$ is normalized.

\medskip

\noindent \emph{Metric and affine structure}.
Fix a good resolution $\mu: \hX \to X$, and  denote by $\cE(\mu)$ the finite set of prime divisors $E \subset \mu^{-1}(0)$. Define the dual graph $\Gamma(\mu)$ to be the following finite simplicial graph. Vertices are in bijection with $\cE(\mu)$ and  edges with the set of singular points of $\mu^{-1}(0)$.
Two vertices $E$ and $E'$ are connected by an edge $p$ if $E\cap E' =p$.
For convenience, we fix a parameterization by the segment $[0,1]$ of any edge.

The graph $\Gamma(\mu)$ naturally embeds in the normalized valuations space $\cV_X$ as follows. We map any vertex $E \in \cE(\mu)$ to the valuation $\nu_E \= b_E^{-1} \mu_* \ord_E$.  Pick an edge $p = E \cap E'$, and take local coordinates $(z,z')$ at $p$ such that $E= \{ z=0 \}$, $E' = \{ z' =0 \}$. 
For any $ t \in [0,1]$, and any holomorphic germ $g(z,z') = \sum g_{ij} z^i (z')^j$ at $p$, 
we define:
\begin{equation}\label{e:monom}
\nu_t (g) = \min\left\{ \frac{i}{b_E}t + \frac{j}{b_E'} (1-t),\, g_{ij} \neq 0 \right\}~. 
\end{equation}
Such a valuation is called monomial as it is determined by its values on the monomials $z$ and $z'$.
Note that $\nu_t$ is divisorial iff $t$ is a rational number.
Now we map the point in the edge $p$ given by the parameter $t$ to the valuation $\mu_* \nu_t$ defined by $\mu_* \nu_t (f) = \nu_t (f \circ \mu)$ for all $f\in \cO_X$. By construction $\mu_* \nu_t$ is normalized and belongs to $\cV_X$, and the map $\Gamma(\mu) \to \cV_X$ is continuous. It is not hard to prove that this map is actually injective. In the sequel, we shall always identify the simplicial graph $\Gamma(\mu)$ with its image in $\cV_X$. 

Any valuation which is proportional to a valuation belonging to the dual graph of some good resolution is called \emph{quasi-monomial}. The set of all quasi-monomial valuations in $\hcV_X$ (resp. $\cV_X$) will be 
denoted by $\hcVqm$ (resp. by $\cVqm$).

A \emph{segment} in $\cV_X$ is a closed subset which coincides with the edge of some dual graph $\Gamma(\mu)$ of some good resolution. In an equivalent way, a segment consists of all valuations  
of the form $\mu_* \nu_t$, $t\in [0,1]$ with $\nu_t$ as in~\eqref{e:monom}.

Fix a good resolution $\mu$. On $\Gamma(\mu)$, we put the unique metric $d_\mu$ such that for any edge $E \cap E'$ one has
$d_\mu (\mu_* \nu_t, \mu_* \nu_{t'}) = |t-t'|/(b_E b_{E'})$ in the parameterization given 
by~\eqref{e:monom}. The $d_\mu$-length of the edge associated to the point $E \cap E'$ is thus equal to $(b_E b_{E'})^{-1}$. If $\mu'$ dominates $\mu$, then the regular map $\mu^{-1} \circ \mu$ is a composition of point blow-ups. Arguing inductively, one shows that $\Gamma(\mu)$ embeds naturally into $\Gamma(\mu')$ and that the embedding is isometric for the metrics $d_\mu$, $d_{\mu'}$.
The metrics $d_\mu$ thus patch together. In other words, there exists a unique metric $d$ on $\cVqm$ such that its restriction to any $\Gamma(\mu)$ equals $d_\mu$ for all good resolutions.
 
Note that the topology induced by the metric $d$ on $\cVqm$ is not locally compact, and is stronger than the topology of the pointwise convergence. 
Note also that the metric $d$ induces on any segment of $\cVqm$ an affine structure. A map $f: \cVqm \to \R$ is \emph{affine} on $\Gamma(\mu)$ if its restriction to any edge is affine that is $f (t) = at +b$ in the parameterization given by~\eqref{e:monom} for some $a,b \in \R$.
%
\medskip

\noindent \emph{Global topology of $\cV_X$}.
Pick a valuation $\nu \in \hcV_X$, and a resolution $\mu: \hX \to X$. 
For any (scheme-theoretic) point $p$ in $\mu^{-1}(0)$, then $\nu$ induces a valuation on the local ring $\cO_{\hX,p}$ and one can show that there exists a unique point for which $\nu$ is positive on the maximal ideal $\fm_{\hX,p}$. This point is called the \emph{center} of $\nu$ in $\hX$. When the center is positive dimensional, then it is a prime divisor $E$ and $\nu$ is a divisorial valuation proportional to $\mu_* \ord_E$.

Suppose now that $\mu$ is a good resolution. One can construct a natural retraction $r_\mu: \cV_X \to \Gamma(\mu)$ in the  following way. On $\Gamma(\mu)$ we set $r_\mu = \mathrm{id}$.
Pick $\nu\in \cV_X\setminus\Gamma(\mu)$. Then the center of $\nu$ is  a point. If
it is not the intersection of two exceptional primes, then it is included in a unique prime $E$. In this case, we set $r_\mu(\nu) \= \nu_E = b_E^{-1} \mu_*\ord_E$. Otherwise, the center of $\nu$ is equal to $p = E\cap E'$. We fix local coordinates $z,z'$ at $p$ with $E = \{ z=0 \}$, $E'= \{ z'=0 \}$, and set $r_\mu(\nu)$ to the unique monomial valuation in the coordinates $z,z'$ which agrees with $\nu$ on $z$ and $z'$. In the notation of~\eqref{e:monom}, we have $r_\mu(\nu)\= \mu_* \nu_t$ with $\nu_t(z) = \nu(z)$, $\nu_t(z') = \nu(z')$. The map $r_\mu$ is continuous for the topology of the pointwise convergence.

Suppose $\mu'$ is another good resolution which dominates $\mu$. Then $\mu'$ is the composition of $\mu$ with a finite sequence of point blow-ups. In the case this sequence is reduced to the blow-up at one point $p$, then either $p= E \cap E'$ for some $E, E' \in \cE(\mu)$ and $\Gamma(\mu')=\Gamma(\mu)$; or $p$ belongs to a single $E\in\cE(\mu)$ and  $\Gamma(\mu')$ is obtained from $\Gamma(\mu)$ by adding one segment attached to the vertex corresponding to $E$. In any case, we see that there is a natural retraction map
$\Gamma(\mu') \to \Gamma(\mu)$. This retraction coincides with $r_\mu$.

Note also that these retraction maps are compatible in the sense that $r_\mu = r_\mu|_{\Gamma(\mu')} \circ r_{\mu'}$. We can thus consider the following topological space $\varprojlim_\mu \Gamma(\mu)$.
It is a compact space and there is a natural continuous map $\cV_X \to \varprojlim_\mu \Gamma(\mu)$.
It follows from a theorem of Zariski  that this map is bijective (see~\cite{vaquie}). Hence
$\cV_X$ is homeomorphic to $\varprojlim_\mu \Gamma(\mu)$.

This presentation of $\cV_X$ has several consequences. Firstly, $\cVqm$  and the set of divisorial valuations are both dense in $\cV_X$. Secondly, any finite graph $\Gamma(\mu)$ is a retract
of $\cV_X$. In fact, one can show there is actually a strong deformation retract from $\cV_X$ to any $\Gamma(\mu)$, see~\cite{berkovich,thuillier}. In particular, $\cV_X$ has the homotopy type of the dual graph of the minimal resolution of $(X,0)$.

One can also build the injective limits of all graphs $\Gamma(\mu)$. In this way, we get back the space
$\cVqm$.  We may transport the affine structure of the graphs to $\cVqm$ using this presentation, and make the following definition.
A  \emph{piecewise affine function} $f$ on $\cV_X$ is a real-valued function such that there exist
a good resolution $\mu$ and a continuous function $g$ on $\Gamma(\mu)$ which is affine on each the edge of $\Gamma(\mu)$ for the metric $d$, and such that $f(\nu) = g \circ r_\mu(\nu)$.

\medskip

\noindent \emph{Transform of divisors}.
Fix any good resolution $\mu : \hX \to X$. The intersection form on $\Div(\hX) \= \sum_{E \in \cE(\mu)} \R [E]$ is negative definite, hence for any prime divisor $E\in \cE(\mu)$ there exists a (unique) divisor $Z_E\in \Div(\hX)$ such that
\begin{equation}\label{e:dual}
 Z_E \cdot Z = \ord_E(Z)
\end{equation}
for all $Z \in \Div(\hX)$. The following lemma is classical, see~\cite[Corollary~4.2]{kollar-mori}.
\begin{lem}\label{l:neg}
For any good resolution $\mu$ and any prime divisor $E\in \cE(\mu)$, one has
$\ord_{E'}(Z_E) <0$ for all $E'\in\cE(\mu)$.
\end{lem}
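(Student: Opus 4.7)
The plan is to exploit the negative-definiteness of the intersection form on $\Div(\hX) = \sum_{E\in\cE(\mu)} \R[E]$ together with the connectedness of the exceptional fiber $\mu^{-1}(0)$ (which holds because $(X,0)$ is normal, by Zariski's main theorem). Setting $D \= -Z_E$, the defining identity~\eqref{e:dual} applied to $Z = E_j$ (using $\ord_{E_j}(E_k) = \delta_{jk}$) reads $D \cdot E = -1$ and $D \cdot E_j = 0$ for $j \neq E$. The lemma amounts to showing that every coefficient of $D$ in $\Div(\hX)$ is strictly positive.

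First I would establish that $D$ is effective by a Zariski-style decomposition. Write $D = P - N$ with $P, N \ge 0$ effective and with disjoint supports. Suppose $N \neq 0$ and compute $D \cdot N$ in two ways. On one hand, $D \cdot N = P \cdot N - N^2$; here $P \cdot N \ge 0$ since the two effective divisors have disjoint supports and distinct exceptional primes meet non-negatively, while $N^2 < 0$ by negative-definiteness applied to the nonzero class $N$, hence $D \cdot N > 0$. On the other hand, expanding $N = \sum_i n_i E_i$ with $n_i \ge 0$ and using $D \cdot E_i = -\delta_{E,E_i}$ gives $D \cdot N = -n_E \le 0$. Contradiction, so $N = 0$ and $D \ge 0$.

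Second I would upgrade effectiveness to strict positivity using connectedness of the dual graph. Assume some $E' \in \cE(\mu)$ has $\ord_{E'}(D) = 0$. By connectedness, $E'$ is adjacent to some exceptional prime $E''$ with $\ord_{E''}(D) > 0$. Expanding $D \cdot E' = \sum_i \ord_{E_i}(D)\, (E_i \cdot E')$: the term from $E'$ itself vanishes (its coefficient is zero), the term from $E''$ contributes $\ord_{E''}(D) \cdot (E'' \cdot E') > 0$ since in a good resolution two distinct adjacent components meet transversally in one point, and the remaining terms are non-negative (non-negative coefficients times non-negative intersection numbers). Therefore $D \cdot E' > 0$, contradicting $D \cdot E' \in \{-1,0\}$. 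Hence every coefficient of $D$ is strictly positive, i.e.\ $\ord_{E'}(Z_E) < 0$ for all $E' \in \cE(\mu)$.

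The only real subtlety lies in the first step, namely getting the signs right in the Zariski-type decomposition; once effectiveness of $D$ is in hand, the strictness upgrade is a routine connectedness argument. No dynamical input is needed; the statement is a purely linear-algebraic consequence of $M$-matrix-type properties of the intersection matrix of a connected negative-definite exceptional configuration.
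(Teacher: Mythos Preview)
Your argument is correct. The paper itself does not supply a proof: it simply declares the lemma classical and refers to \cite[Corollary~4.2]{kollar-mori}. What you wrote is essentially the standard proof one finds there, exploiting negative-definiteness of the intersection form together with connectedness of the exceptional configuration (the latter being a consequence of normality via Zariski's main theorem, as you note).

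One small imprecision in the strictness step: you begin by fixing an \emph{arbitrary} $E'$ with $\ord_{E'}(D)=0$ and then assert it is adjacent to some $E''$ with positive coefficient. Connectedness only guarantees that \emph{some} zero-coefficient component lies on the boundary of the support of $D$; you should choose such a boundary $E'$ rather than start from an arbitrary one. This is purely cosmetic and does not affect the validity of the argument.
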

Suppose one is given a Cartier divisor $C$ in $X$ defined by the equation $\{ f=0 \}$ with $f \in \cO_X$.
Then for any $\nu \in \hcV_X$ one can set $\nu(C) \= \nu(f)$. This does not depend on the choice of the equation $f$. If $C$ is merely a Weil divisor, one can still define $\nu(C)$ using Mumford's pull-back as follows.
Pick a resolution $\mu: \hX \to X$, and let $p$ be the center of $\nu$ in $\hX$.
Then $\nu = \mu_* \nu'$ for some valuation $\nu'$ defined on the regular local ring $\cO_{\hX,p}$.
Denote by $\mu_*^{-1}(C)$ the strict transform of $C$ by $\mu$.
There exists a unique divisor $Z_C\in\Div(\hX)$ such that $\mu_*^{-1}C + Z_C$ is numerically trivial, i.e. $(\mu_*^{-1}C+Z_C) \cdot E =0 $ for all $E\in \cE(\mu)$. We pick an equation $g \in \cO_{\hX,p}$ of $\mu_*^{-1}C + Z_C$ near $p$, and we set $\nu(C) \= \nu' (g)$.
This definition is compatible with the standard one in the Cartier case. Indeed if $C$ is the divisor of $f\in \cO_X$, then $Z_C = \sum_{E \in \cE(\mu)} \ord_E(\mu^*f) [E]$.
\begin{prop}\label{p:maps}
For any effective Weil divisor $C$, the function $g_C(\nu)= \nu(C)$ on $\hcV$ is $1$-homogeneous, continuous and non-negative with values in $\R_+\cup \{ + \infty \}$. For any good resolution $\mu$, one has $g_C \circ r_\mu \le g_C$ and the restriction $g_C|_{\Gamma(\mu)}$  is piecewise affine.

If $C'$ is any other Weil divisor having no common component with $C$, then
the function $\min \{ g_C, g_{C'} \}$ is piecewise affine on $\cV_X$.
\end{prop}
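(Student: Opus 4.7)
The plan is to verify each assertion via the local structure of $g_C$ near a point $p$ of the exceptional divisor of a good resolution $\mu : \hX \to X$, where $g_C(\nu)$ is computed as the valuation of a local equation of the numerical pullback $D \= \mu_*^{-1}C + Z_C$.

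First I would establish $1$-homogeneity (immediate from the definition), non-negativity, and the inequality $g_C \circ r_\mu \le g_C$. For non-negativity it suffices to show that $D$ is effective: writing $Z_C = \sum_E a_E E$, the defining relation $D \cdot E' = 0$ becomes $(Ma)_{E'} = -(\mu_*^{-1}C \cdot E') \le 0$ for every $E' \in \cE(\mu)$, where $M = (E \cdot E')$ is symmetric and negative definite. By Lemma~\ref{l:neg} every entry of $M^{-1}$ is strictly negative, so the coordinatewise inequality $Ma \le 0$ forces $a \ge 0$ coordinatewise and $D$ is effective. A local equation $g \in \cO_{\hX,p}$ is then holomorphic, $g_C(\nu) = \nu'(g) \in \R_+ \cup \{+\infty\}$, and well-definedness (independence of $\mu$) follows by comparing two resolutions through a common refinement.

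Next I would analyze the local structure. Around a singular point $p = E \cap E'$ of $\mu^{-1}(0)$ with coordinates $(z, z')$ and local expansion $g = \sum g_{ij} z^i (z')^j$, the ultrametric inequality gives
$$
\nu(g) \ge \min \bigl\{ i\,\nu(z) + j\,\nu(z') : g_{ij} \neq 0 \bigr\}
$$
for any valuation $\nu$ centered at $p$, with equality when $\nu$ is monomial in $(z, z')$. Applied to $r_\mu(\nu)$, which by construction is the monomial valuation in $(z, z')$ sharing the values $\nu(z)$ and $\nu(z')$, this gives $g_C(\nu) \ge g_C(r_\mu(\nu))$; the analogous (and easier) argument at a smooth point of $\mu^{-1}(0)$ completes the proof of $g_C \circ r_\mu \le g_C$. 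Restricting to the edge parametrized by $t \in [0,1]$ as in~\eqref{e:monom} yields
$$
g_C(\mu_* \nu_t) = \min_{g_{ij} \neq 0} \left( \frac{i}{b_E} t + \frac{j}{b_{E'}} (1-t) \right),
$$
a concave minimum of finitely many affine functions of $t$, hence piecewise affine in the metric $d_\mu$ since $d_\mu$ is linear in $t$ on the edge. Continuity of $g_C$ on $\cVqm$ is immediate from this formula; continuity on all of $\cV_X = \varprojlim \Gamma(\mu)$ then follows by combining $g_C \circ r_\mu \le g_C$ with the density of $\cVqm$, exactly as in the smooth case treated in~\cite{valtree}.

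The last claim, piecewise affinity of $\min\{g_C, g_{C'}\}$ on $\cV_X$, is what I expect to be the main obstacle, as it requires a \emph{global} factorization through some retraction $r_{\mu'}$ rather than mere edgewise affinity. I would first choose a good resolution $\mu$ on which both $g_C|_{\Gamma(\mu)}$ and $g_{C'}|_{\Gamma(\mu)}$ are affine on every edge, achievable by blowing up at the finitely many rational kink points apparent from the Newton-polygon formula above. Because $C$ and $C'$ have no common component, at each exceptional point $p$ the ideal $\mathfrak{a}_p \= (g, g')$ generated by local equations of the two numerical pullbacks has its common $1$-dimensional part supported only on exceptional divisors through $p$, hence monomial in $(z, z')$; factoring it out, the residual ideal has zero-dimensional vanishing locus. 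Applying standard principalization via additional blow-ups yields a good resolution $\mu'$ in which $\mathfrak{a}_p = (z^\alpha (z')^\beta)$ in adapted local coordinates at every new exceptional point. Then
$$
\min\{g_C, g_{C'}\}(\nu) = \nu(\mathfrak{a}_p) = \alpha\,\nu(z) + \beta\,\nu(z'),
$$
which depends only on $\nu(z)$ and $\nu(z')$, hence factors through $r_{\mu'}$ and defines a continuous function on $\Gamma(\mu')$ affine on every edge. The main technical care is in checking compatibility at each blow-up stage between the principalization procedure and the construction of the Mumford pullbacks, which is handled since blow-ups of good resolutions remain good resolutions and the divisors $Z_C, Z_{C'}$ transform in a controlled way.
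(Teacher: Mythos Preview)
Your proposal is correct and follows the same overall strategy as the paper: reduce every claim to a local monomial computation after passing to a sufficiently fine good resolution. The only tactical differences are that you prove $g_C\circ r_\mu\le g_C$ directly via the ultrametric inequality (the paper instead reduces to a single blow-up and argues by induction on the number of exceptional components of $\mu^{-1}\circ\mu'$), and for the final claim you phrase the needed resolution as a principalization of the ideal $(g,g')$, whereas the paper simply says ``pick a resolution of the divisor $C+C'$''; these are equivalent in dimension~$2$ and yield the same conclusion.
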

\begin{proof}
For the first statement, one only needs to prove $g_C \circ r_\mu \le g_C$ on any $\Gamma(\mu')$ for $\mu'$ dominating $\mu$. The proof is an easy induction on the number of exceptional components of $\mu^{-1} \circ \mu'$.
To prove that the restriction of $g_C$ is piecewise affine on any segment, we
may suppose  that the total transform of $C$ has only normal crossing singularities. Each edge of $\Gamma(\mu)$ corresponds to a point $p$, where the divisor $\mu_*^{-1}C+Z_C$
is defined by a monomial $z^a(z')^b = 0$ for some local coordinates $z,z'$. Then by~\eqref{e:monom},
$g_C(\nu_t) = at/b_E + b(1-t)/b_{E'}$ which is clearly affine in $t$.
The last statement is proved in the same way, by picking a resolution of the divisor $C+C'$.
\end{proof}
\smallskip
\subsection{Divisors on $\fX$}\label{s:potential}
~

\noindent \emph{Cartier and Weil classes}.
The set of all good resolutions forms an inductive set: for any two such resolutions $\mu_1, \mu_2$ there exists a third one $\mu_3$ dominating the two. One can thus consider the injective and projective limits of $\Div(X_\mu)$ over all resolutions $\mu: X_\mu\to X$.
\begin{defi}
A Weil divisor $Z$ is a collection of divisors $Z_\mu \in \Div(X_\mu)$ such that $\varpi_* Z_{\mu'} = Z_\mu$ whenever $\varpi \= \mu^{-1} \circ \mu'$ is regular.

A Cartier divisor is a Weil divisor such that there exists a good resolution $\mu_0$ for which 
$Z_\mu  = \varpi^* Z_{\mu_0}$ whenever $\varpi \= \mu_0^{-1} \circ \mu$ is regular.
\end{defi} 
The set of all Weil (resp. Cartier) divisors will be denoted by $W(\fX)$ (resp. $C(\fX)$).
For a Cartier divisor, a good resolution $\mu_0$ as above is called a \emph{determination} of $Z$. One also says that $Z$ is determined in $X_{\mu_0}$.

The set of all Weil classes can be endowed with the projective limit topology. In this topology, a sequence $Z_n$ converges to $Z$ iff $Z_{n,\mu} \to Z_\mu$ in the finite dimensional vector space $\Div(X_\mu)$ for all good resolutions $\mu$. 

There is a canonical bijection between functions on the set of divisorial valuations in $\cV_X$ and Weil divisors. To any such function $g$ and any good resolution we attach the divisor $Z_\mu(g) \= \sum_{E\in \cE(\mu)} g(\nu_E) b(\nu_E) [E]$. We write $Z(g)$ for the Weil divisor associated to $g$, and $g_Z$ for the function associated to $Z$. 
A Weil divisor is Cartier iff the function $g_Z$ is the restriction to divisorial valuations of a piecewise affine function as defined in Section~\ref{s:valuation}. The topology on Weil classes corresponds to the pointwise convergence on functions.

There is a natural pairing on $C(\fX)\times W(\fX)$. Pick $Z \in C(\fX)$, $W \in W(\fX)$ and a good resolution $\mu$ such that $Z$ is determined in $X_\mu$. Then one can set $Z \cdot W \=  Z_{\mu} \cdot W_{\mu}$. This does not depend on the choice of the determination.
For each $\mu$ the intersection product is negative definite on $\Div(X_\mu)$ hence the induced intersection form on $C(\fX)$ is also negative definite.

\smallskip

Fix  a resolution $\mu: X_\mu \to X$. A divisor $Z\in \Div(X_\mu)$ is effective if
$\ord_E(Z) \ge 0$ for any prime divisor $E\in\cE(\mu)$. In this case we write $Z\ge 0$. A divisor is \emph{nef}\footnote{formally speaking $Z$ is relatively nef} if $Z \cdot E \ge 0$ for any $E\in\cE(\mu)$.
 The cone of nef classes is generated by  the divisors $\{ Z_E\} _{E \in \cE(\mu)}$ as defined 
in~\eqref{e:dual}. If $Z\neq 0$ is nef, then $-Z$ is effective and $\ord_E(Z) <0$ for all $E$ by Lemma~\ref{l:neg}. A Weil class $Z$ is effective (resp. nef) if all its incarnations $Z_\mu \in \Div(X_\mu)$ are effective (resp. nef).

\medskip
\noindent \emph{Dual divisors}.
Pick an arbitrary valuation $\nu\in \cV_X$, and a Cartier divisor $W$ determined in some model $X_\mu$.
Let $p$ be the center of $\nu$ in $X_\mu$. 
Then  $\nu$ induces a valuation $\nu'$ on the local ring at $p$, and
we set $\nu(W) \= \nu'(f)$ where $f$ is a local equation for $W$ at $p$.
\begin{prop}\label{p:depend}
For any $\nu \in \cVqm$ there exists a unique 
nef Weil divisor $Z_\nu$ such that
$Z_\nu \cdot W = \nu (W)$ for any Cartier divisor $W$.
%
Moreover, for any fixed $\nu_0 \in \Gamma(\mu)$
there exist positive constants $c_0, c_1 >0$ such that:
\begin{equation}\label{e:bdd}
c_0 \, Z_{\nu_0} \le Z_{\nu} \le c_1 \, Z_{\nu_0}~,
\end{equation}
for all $\nu \in \Gamma(\mu)$.
\end{prop}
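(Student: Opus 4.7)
\emph{Construction.} First I would fix a good resolution $\mu$ and work in the finite-dimensional space $\Div(X_\mu) \otimes \R$; the intersection form there is negative definite, hence non-degenerate, so the linear functional $W \mapsto \nu(W)$ on this space (viewed as the Cartier divisors determined in $X_\mu$) is uniquely represented by a class $Z_{\nu,\mu}$ with $Z_{\nu,\mu} \cdot W = \nu(W)$. I would then show that the collection $(Z_{\nu,\mu})_\mu$ is a Weil divisor by verifying $\varpi_* Z_{\nu,\mu'} = Z_{\nu,\mu}$ whenever $\mu' \ge \mu$ via $\varpi$: by the projection formula together with the invariance $\nu(\varpi^* W) = \nu(W)$, one has $\varpi_* Z_{\nu,\mu'} \cdot W = \nu(W) = Z_{\nu,\mu} \cdot W$ for every Cartier $W$ determined in $X_\mu$, and non-degeneracy forces the desired equality. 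Nefness follows from $Z_{\nu,\mu} \cdot F = \nu(F) \ge 0$ for every prime $F \in \cE(\mu)$ regarded as an effective Cartier divisor, using that $\nu$ is non-negative on the local ring at its center, while uniqueness is again a direct consequence of non-degeneracy.

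\emph{Estimate in the fixed model.} For~\eqref{e:bdd} my plan is to first prove the inequality inside $X_\mu$ and then propagate. Continuity of $\nu \mapsto \nu(F)$ for each $F \in \cE(\mu)$ yields continuity of $\nu \mapsto Z_{\nu,\mu}$ from $\Gamma(\mu)$ into the finite-dimensional vector space $\Div(X_\mu) \otimes \R$. Since $Z_{\nu_0,\mu}$ is nef and nonzero, Lemma~\ref{l:neg} guarantees that all of its coefficients are strictly negative; compactness of $\Gamma(\mu)$ then produces positive constants $c_0, c_1$ with $c_0 Z_{\nu_0,\mu} \le Z_{\nu,\mu} \le c_1 Z_{\nu_0,\mu}$ coefficient-wise in $X_\mu$ for every $\nu \in \Gamma(\mu)$.

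\emph{Propagation.} The main obstacle is to upgrade the previous step to an inequality of Weil divisors, i.e., to verify the same bound in every further model $\mu' \ge \mu$. Since effectivity of a Weil divisor is characterized by non-negative intersection with every dual class $Z_{F'}$, this reduces to the uniform bound
\[
c_0\, \nu_0(Z_{F'}) \;\le\; \nu(Z_{F'}) \;\le\; c_1\, \nu_0(Z_{F'})
\]
for every prime $F'$ in every $\mu' \ge \mu$ and every $\nu \in \Gamma(\mu)$. I would argue locally at the center of $\nu$ in $X_{\mu'}$: when $\nu$ and $\nu_0$ share a common center lying above a crossing point $p = E \cap E' \in X_\mu$, both are monomial valuations in local coordinates $(z, z')$ adapted to the branches $E, E'$, and a local equation $\sum f_{ij} z^i (z')^j$ of $Z_{F'}$ yields $\nu(Z_{F'}) = \min\{i \nu(z) + j \nu(z') : f_{ij} \neq 0\}$, and analogously for $\nu_0$; the ratio of two such minima taken over the same exponent set is controlled above and below by the extreme weight ratios $\nu(z)/\nu_0(z)$ and $\nu(z')/\nu_0(z')$, which are uniformly bounded on $\Gamma(\mu)$ by constants independent of $F'$. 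The delicate case is when $\nu$ and $\nu_0$ have distinct centers in $X_\mu$, typically when $\nu_0$ is a vertex of $\Gamma(\mu)$: the naive local monomial comparison breaks down because one of the weights $\nu_0(z), \nu_0(z')$ vanishes. There I would cover $\Gamma(\mu)$ by finitely many compact subgraphs on each of which a common center persists in all higher models, handle each piece by the monomial argument, and glue the resulting constants using the continuity already established in the fixed model $X_\mu$ together with the strict negativity of the coefficients of $Z_{\nu_0,\mu'}$ afforded by Lemma~\ref{l:neg}.
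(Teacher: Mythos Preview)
Your construction of $Z_\nu$ via Riesz representation in each $\Div(X_\mu)$, together with the compatibility check and the nefness argument, is correct and in fact more transparent than the paper's explicit interpolation. The compactness argument giving the inequality inside the fixed model $X_\mu$ is also fine.

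The genuine gap is in the propagation step. Your monomial comparison assumes that $Z_{F'}$ admits a local equation $\sum f_{ij}z^i(z')^j$ in the coordinates $(z,z')$ at $p\in X_\mu$, so that $\nu(Z_{F'})$ and $\nu_0(Z_{F'})$ are minima over the \emph{same} exponent set. But $Z_{F'}$ is only Cartier in the higher model $X_{\mu'}$; it has no local equation in $\cO_{X_\mu,p}$. To compute $\nu(Z_{F'})$ you must go to the center of $\nu$ in $X_{\mu'}$, and even when $\nu,\nu_0$ share a center in $X_\mu$, their centers in $X_{\mu'}$ typically differ, so the two minima are taken over different exponent sets and the ratio argument collapses. (One can show, using the negativity lemma applied to $\varpi^*\varpi_*(-Z_{F'})-(-Z_{F'})$, that $t\mapsto\nu_t(-Z_{F'})$ is \emph{concave} on each edge of $\Gamma(\mu)$; this gives the lower bound by the chord but gives no upper bound, so only one of the two inequalities in~\eqref{e:bdd} is reachable this way.) Your ``delicate case'' is then not a boundary issue but the heart of the matter, and the covering/gluing sketch does not close it.

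What the paper does instead is prove, for every $\nu\in\Gamma(\mu)$, that the function $g_\nu$ attached to $Z_\nu$ satisfies $g_\nu\circ r_\mu=g_\nu$. It obtains this by writing, on the edge $[\nu_0,\nu_1]$, the explicit formula $g_t=t\,g_1+(1-t)\,g_0-h_t$, where $g_0,g_1$ come from the Cartier divisors $Z_{\nu_0},Z_{\nu_1}$ determined in $X_\mu$ and $h_t$ is a tent function also factoring through $r_\mu$. Once $g_\nu\circ r_\mu=g_\nu$ is known, the pointwise comparison of $g_\nu$ with $g_{\nu_0}$ on all of $\cVqm$ reduces to their comparison on the compact graph $\Gamma(\mu)$, which is exactly your fixed-model estimate. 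If you want to stay within your framework, you can recover this retraction property without the explicit formula: for a single blow-up $\varpi:X_{\mu'}\to X_\mu$ at a point $q$ which is not the center of $\nu$, your characterization gives $(Z_\nu)_{\mu'}\cdot[E'']=\nu([E''])=0$ (the exceptional $E''$ misses the center), hence $(Z_\nu)_{\mu'}=\varpi^*(Z_\nu)_\mu$; iterating and handling satellite blow-ups (where $\nu_{E''}\in\Gamma(\mu)$ already) yields $g_\nu\circ r_\mu=g_\nu$.
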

\begin{proof}
The construction of $Z_\nu$ in the case $\nu$ is divisorial was done above, see~\eqref{e:dual}.
Let us indicate how to extend it to arbitrary quasi-monomial valuations.
Pick a good resolution $\mu: \hX \to X$,  and a point $p$ at the intersection of two exceptional primes $E_0$ and $E_1$ with associated divisorial valuations $\nu_0$ and $\nu_1$. For any $t\in[0,1]$, denote by $\nu_t$ the unique valuation in the segment $[\nu_0,\nu_1]$ at distance $t\times d(\nu_0,\nu_1)$ from $\nu_0$ for the metric $d$ defined on $\cVqm$ in Section~\ref{s:valuation}.
Let $g_0$ and $g_1$ be the functions on $\cV_X$ associated to the two Cartier divisors determined by $Z_{\nu_0}$ and $Z_{\nu_1}$ in $\hX$. For each $t$, set
\begin{equation}\label{e:def-lapl}
g_t = t g_1 + (1-t) g_0 - h_t
\end{equation}
on $\cVqm$ where $h_t$ is the unique continuous function on $\cVqm$ satisfying: $h_t \circ r_\mu = h_t$; $h_t \equiv 0$ on $\Gamma(\mu) \setminus ]\nu_0,\nu_1[$; $h_t$ is affine on the two segments $[\nu_0, \nu_t]$ and $[\nu_t, \nu_1]$; and
$h_t(\nu_t)  = (1-t) d(\nu_t,\nu_0)$.
We claim that $Z_{\nu_t}\= Z(g_t)$.

The proof of the claim can be done as follows. By continuity, it is sufficient to prove it for rational $t$, that is for a divisorial valuation $\nu_t$. By induction, one is  reduced to  the case one blows up once the point $E_0\cap E_1$. Let $E$ be the exceptional divisor. Then $b_E = b_0 + b_1$ hence $t=b_1/(b_0+b_1)$, and $Z_E = Z_{E_0} + Z_{E_1} - E$. The function $g_E$ is associated to the divisor $b_E^{-1} Z_E$, hence
$g_E = b_0 b_E^{-1} g_0 + b_1 b_E^{-1} g_ 1- b_E^{-1} h = tg_1+ (1-t) g_0 - b_E^{-1} h$ where $h$ is the function associated to the Cartier class determined by $E$.
To conclude that $b_E^{-1} h$ corresponds to $ h_t$, one only needs to show that the values of the two functions at $\nu_E$ are equal.
But $b_E^{-1} h_E(\nu_E) = b_E^{-2} = t (1-t) d(\nu_0, \nu_1)$, and $h_t(\nu_E) = h_t(\nu_0) + (1-t) d(\nu_t,\nu_0) = 0 + t (1-t) d(\nu_0, \nu_1)$.

%
Finally, to prove~\eqref{e:bdd} it is sufficient to find $c_0, c_1 >0$ such that
\begin{equation}\label{e:pf-fndtm}
-c_0 \le g_t \le -c_1
\end{equation}
for all $0 \le t \le 1$.
By Lemma~\ref{l:neg}, $g_\varepsilon$ is negative, and $g_\varepsilon \circ r_\mu = g_\varepsilon$, for $\varepsilon =0,1$. Thus~\eqref{e:pf-fndtm} holds for $t \in \{ 0,1\}$ with $ c_0 = - \min_{\Gamma(\mu)} \{ g_0, g_1 \}$ and $ c_1 = - \max_{\Gamma(\mu)} \{ g_0, g_1 \}$. Now for any $t$, one has $0 \le h_t \le c_2:= d(\nu_0,\nu_1)$. Since $g_t \circ r_\mu = g_t$, we conclude that $ -c_0-c_2 \le g_t \le - c_1$.
\end{proof}

\smallskip
\subsection{The Jacobian formula}
~
\medskip

\noindent \emph{Holomorphic germs}.
Let $(X,0)$, $(Y,0)$ be two germs of normal surface singularity and pick $F: (X,0) \to (Y,0)$ a finite holomorphic germ. For any $\nu \in \hcV_X$, we define $F_*\nu \in \hcV_Y$ by the formula $F_*\nu (f) = \nu ( f \circ F)$ for any $f \in \cO_Y$. The map $F_*: \hcV_X \to \hcV_Y$ is continuous. We further define the \emph{local contraction rate} at $\nu \in \hcV_X$ to be  $c(F,\nu) \= F_*\nu (\fm_Y) = \min \{ \nu ( f \circ F ) , \, f\in \fm_Y \} \in ]0, + \infty]$. As $F$ is finite, $c(F,\nu) < +\infty $ for all valuations, so that $c(F,\cdot)$ is continuous. On the other hand, note that $F^* \fm_Y \subset \fm_X$ implies $ c(F, \cdot) \ge 1$. We thus have a continuous map on normalized valuation spaces $F_\bullet: \cV_X \to \cV_Y$, defined by 
$F_\bullet \nu = c(F,\nu)^{-1}\times F_*\nu$.
\begin{lem}\label{l:finite}
If the map $F: (X,0) \to (Y,0)$ is finite, then  $F_*: \hcV_X \to \hcV_Y$ and
$F_\bullet : \cV_X \to \cV_Y$ are surjective. Moreover, any fiber $F_\bullet^{-1} \{ \nu \}$
is finite of cardinality bounded above by the topological degree of $F$.
\end{lem}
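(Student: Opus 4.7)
The plan is to reduce both statements to the classical extension theorem for valuations in a finite field extension. Since $F$ is a finite holomorphic germ, $F^*$ presents $\cO_X$ as a module-finite integral extension of $F^*\cO_Y$, so the induced finite field extension $K_Y := \mathrm{Frac}(\cO_Y) \hookrightarrow K_X := \mathrm{Frac}(\cO_X)$ has degree equal to the topological degree $e$ of $F$.

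For surjectivity of $F_*$, I would fix $\nu \in \hcV_Y$ and view it as a rank-one valuation on $K_Y$. By the classical extension theorem, there exists at least one valuation $\mu$ on $K_X$ whose restriction to $K_Y$ equals $\nu$, and any such $\mu$ is automatically rank one since $K_X/K_Y$ is algebraic. It then remains to check that $\mu$ restricts to an element of $\hcV_X$. Nonnegativity on $\cO_X$ is immediate from integrality: each $h \in \cO_X$ satisfies a monic equation over $F^*\cO_Y \subset R_\nu$, and $R_\mu$ is integrally closed, so $\cO_X \subset R_\mu$. Positivity on $\fm_X$ is the geometric input: since $F$ is finite, $F^{-1}(0)=\{0\}$ set-theoretically, hence $F^*\fm_Y \cdot \cO_X$ is $\fm_X$-primary; the prime $\fm_\mu \cap \cO_X$, being a prime of $\cO_X$ containing this primary ideal, contains its radical $\fm_X$, so $\mu(\fm_X) > 0$. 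By construction $F_*\mu = \nu$, and normalising this $\mu$ gives surjectivity of $F_\bullet$.

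For the finiteness of the fibre, I would argue as follows. Any $\mu \in F_\bullet^{-1}\{\nu\}$ satisfies $F_*\mu = c(F,\mu)\cdot \nu$ by definition of $F_\bullet$, so the unique rank-one extension of $\mu$ to $K_X$ has valuation ring $R_\mu$ satisfying $R_\mu \cap F^* K_Y = F^* R_\nu$. Since a rank-one valuation on $\cO_X$ is determined by its valuation ring together with the normalisation $\mu(\fm_X)=1$, distinct elements of the fibre produce distinct valuation rings of $K_X$ lying over $R_\nu$. The classical theorem then bounds the number of such valuation rings by $[K_X:K_Y]=e$, which yields $|F_\bullet^{-1}\{\nu\}| \le e$.

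The only point that requires genuine care is the translation between the analytic language of finite holomorphic germs and the purely ring-theoretic extension statement; specifically, the fact that $F$ being a finite holomorphic germ forces $F^*\fm_Y\cdot\cO_X$ to be $\fm_X$-primary. Once this is granted, every remaining step is a formal consequence of standard commutative algebra and the classical theory of valuations in algebraic extensions.
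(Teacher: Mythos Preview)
Your approach is correct and is essentially the standard one; the paper itself does not give a proof but defers to \cite[Proposition~2.4]{eigenval}, which proceeds along the same lines via the extension theory of valuations in a finite ring extension.

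One small technical caveat: valuations in $\hcV_Y$ are allowed to take the value $+\infty$ (for instance curve semivaluations), so not every $\nu\in\hcV_Y$ extends to a valuation on $K_Y=\mathrm{Frac}(\cO_Y)$ as you assume. To cover this case you should first pass to the support prime $\mathfrak p=\nu^{-1}(+\infty)\subset\cO_Y$, use going-up for the integral extension $F^*\cO_Y\subset\cO_X$ to find a prime $\mathfrak q\subset\cO_X$ lying over $\mathfrak p$, and then run your extension argument for the finite field extension $\mathrm{Frac}(\cO_Y/\mathfrak p)\hookrightarrow\mathrm{Frac}(\cO_X/\mathfrak q)$. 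The same adjustment is needed in the fibre-bound step: distinct $\mu\in F_\bullet^{-1}\{\nu\}$ give distinct primes $\mathfrak q$ over $\mathfrak p$ together with distinct extensions of $\nu$ on the corresponding residue field, and the total count is still bounded by $e$. With this modification everything you wrote goes through.
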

The proof is the same as in~\cite[Proposition~2.4]{eigenval}.
%

\noindent \emph{Thinness}.
We now introduce on $\hcV_X$ an important function called the thinness that measures the order of vanishing of the canonical divisor along a valuation. As a canonical divisor of $(X,0)$ might not be $\Q$-Cartier we follow~\cite[Chapter~4]{kollar-mori}.

A holomorphic $2$-form $\om$ on $(X,0)$ is by definition a holomorphic $2$-form on $X\setminus \{ 0\}$ equal to the restriction of some holomorphic $2$-form $\Om$ on $(\C^n,0)$ given some embedding of $X$ into $(\C^n,0)$. Its zero-locus $\Div(\om)$ defines a Weil divisor in $(X,0)$.
Suppose $F: (X,0) \to (Y,0)$ is a  holomorphic map between two normal surface singularities. Then we can embed $X$ and $Y$ into $\C^n$ and $\C^m$ and find a holomorphic map $\tilde{F}: \C^n \to \C^m$ such that
$\tilde{F}|_X = F$. If $\om$ is a holomorphic $2$-form on $Y$, then we set $F^* \om \= \tilde{F}^* \Om$.
The divisor of $F^*\om$ is related to the divisor of $\om$ by the formula
$$\Div(F^*\om) = F^*\Div(\om)+ JF~,$$ where $JF$ is the ramification locus, i.e. the Weil divisor defined by the vanishing of the Jacobian determinant of $F$ on the set of smooth points of $X$.

The \emph{relative canonical divisor} $K\mu \in \Div(\hX)$ is then defined by the relation
$$
\Div(\mu^* \om) = K\mu + \mu_*^{-1} \Div(\om) + Z_\om~,
$$
where $Z_\om$ is the unique divisor supported on $\mu^{-1}(0)$ such that
$\mu_*^{-1} \Div(\om) + Z_\om$ is numerically trivial. In other words, $\mu_*^{-1} \Div(\om) + Z_\om$
is Mumford's pull-back of $\Div(\om)$ by $\mu$.
Note that if $E$ is a prime divisor of genus $g$, the adjunction formula on $\hX$ yields the relation
$g = 1+ \frac12 (E^2 + E \cdot K\mu)$. This relation shows that the relative canonical bundle does not depend on the choice of holomorphic $2$-form.
\begin{prop}
There exists a unique function $A: \hcV \to \R \cup \{ + \infty\}$ such that
\begin{enumerate}
 \item $A(t\nu) = t \times A(\nu)$ for all $t>0$;
 \item for any good resolution $\mu:\hX \to X$ and any prime divisor $E \in \cE(\mu)$, then
$A(\mu_*\ord_E) = 1+ \ord_E(K\mu)$;
\item for any good resolution $\mu:\hX \to X$, the function $A$ is continuous on $\Gamma(\mu)$ and affine on each of its edges;
\item $A$ is a lower semi-continuous function, and $A = \sup_\mu A \circ r_\mu$ where the supremum is taken over all good resolutions.
\end{enumerate}
\end{prop}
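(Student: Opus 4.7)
The plan is to pin down $A$ in three stages—first on the divisorial cone using property~(2), then on the quasi-monomial locus $\hcVqm$ by affine interpolation along edges, and finally on $\hcV_X$ by supremum over retractions—and to observe that properties~(1)–(3) force $A$ on the dense subset $\hcVqm$ of $\cV_X$, so that uniqueness is automatic from property~(4). The key analytical content will be a single blow-up formula for the relative canonical divisor.

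For Step~1, formula~(2) combined with the homogeneity~(1) defines $A$ on every divisorial valuation. To see this value is intrinsic, given two good resolutions $\mu,\mu'$ of $(X,0)$ with a divisorial valuation represented by $E\in\cE(\mu)$ and its strict transform $\tilde E\in\cE(\mu')$ (after passing to a common refinement), one uses that for a blow-up $\varpi$ at a smooth point of $\hX$ the compatibility of Mumford's numerical pull-back gives $K\mu' = \varpi^*K\mu + F$, where $F$ is the new exceptional prime; hence $\ord_{\tilde E}(K\mu')=\ord_E(K\mu)$ and the value $1+\ord_E(K\mu)$ is well-defined. For Step~2, declare $A$ on each edge of $\Gamma(\mu)$ to be the affine function of the parameter $t$ of~\eqref{e:monom} interpolating between the two endpoint values from Step~1. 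Consistency under the two elementary refinements must be checked: blowing up an intersection point $p=E\cap E'$ produces a new exceptional component $F$ with $b_F=b_E+b_{E'}$ and $\ord_F(K\mu')=\ord_E(K\mu)+\ord_{E'}(K\mu)+1$, so
\begin{equation*}
A(\nu_F)=\frac{1+\ord_F(K\mu')}{b_E+b_{E'}}=\frac{b_E A(\nu_E)+b_{E'} A(\nu_{E'})}{b_E+b_{E'}},
\end{equation*}
which is exactly the affine-interpolated value at the parameter $t^*=b_E/(b_E+b_{E'})$ of $\nu_F$ along the edge $[\nu_E,\nu_{E'}]$. The case of blowing up a free point on a single $E$ is analogous with $b_F=b_E$ and only attaches a new pendant segment. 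This yields a well-defined continuous function on $\hcVqm$ satisfying~(1)–(3) by construction.

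For Step~3, define $A(\nu)\=\sup_\mu A\circ r_\mu(\nu)$ on all of $\hcV_X$. Each $A\circ r_\mu$ is continuous since $r_\mu$ is continuous and $A$ is continuous on the finite graph $\Gamma(\mu)$, so the supremum is lower semi-continuous, giving property~(4). To see this supremum recovers the previously defined value on $\hcVqm$, note that for $\nu\in\Gamma(\mu_0)$ any refinement $\mu'$ of $\mu_0$ has $\Gamma(\mu_0)\subset\Gamma(\mu')$ and fixes $\nu$ under $r_{\mu'}$, so $A\circ r_{\mu'}(\nu)=A(\nu)$; the reverse inequality $A\circ r_\mu(\nu)\le A(\nu)$ for arbitrary $\mu$ follows from the monotonicity of $A$ under a single blow-up (the computation in Step~2 shows that when one refines, the value on the interior of an edge can only \emph{increase} since $A(\nu_F)$ matches the interpolation but subsequent finer blow-ups above $F$ insert divisors whose values lie above the linear interpolation by the same $+1$ coming from the blow-up formula). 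Uniqueness is then clear: any function satisfying~(1)–(3) agrees with the above on $\hcVqm$, and condition~(4) fixes the extension to all of $\hcV_X$.

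The main obstacle is the blow-up compatibility at the heart of Step~2: one must check that the Mumford numerical pull-back used to define $K\mu$ in the non-$\Q$-Gorenstein setting still satisfies the clean additive transformation $K\mu'=\varpi^*K\mu+F$ under point blow-ups of the smooth model $\hX$, and that the combinatorial identity $b_F=b_E+b_{E'}$ conspires with the parameter $t^*=b_E/(b_E+b_{E'})$ of $\nu_F$ to make the log-discrepancy a genuinely affine function of the edge parameter. Granting this, the rest is bookkeeping about retractions and dense subsets, essentially identical to the smooth case treated in~\cite{valtree}.
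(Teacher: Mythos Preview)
Your proof is correct and follows the same route as the paper: define $A$ on vertices by the log-discrepancy formula, extend affinely on edges, check compatibility under a single blow-up via $K\mu'=\varpi^*K\mu+E$, and then extend to all of $\hcV_X$ by the supremum of $A\circ r_\mu$. Your parenthetical justification of the key inequality $A\circ r_\mu\le A$ in Step~3 is slightly garbled---blowing up a node gives exact equality on the subdivided edge, while the strict increase by $b_E^{-1}$ occurs only on the new pendant segment attached at a free blow-up---but the inductive reduction to a single blow-up is precisely what the paper does.
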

The value $A(\nu)$ is called the thinness of the valuation $\nu$.
\begin{rmk}\label{rem: ambro}
In Mori's theory, $\ord_E(K\mu)$ is called the discrepancy of $X$ along $E$, and the $1+\ord_E(K\mu)$ is referred to as the log-discrepancy, see~\cite{ambro}.
The introduction of the thinness/log-discrepancy is motivated by the fact that the discrepancy does not vary nicely on the space of \emph{normalized} valuations.
\end{rmk}

\begin{proof}
The uniqueness of $A$ is clear.

To construct $A$ we proceed as follows.
Pick a good resolution $\mu$. If $\nu$ is a vertex of $\Gamma(\mu)$ then
$\nu = b_E^{-1} \mu_* \ord_E$ for some $E \in \cE(\mu)$ and we set
$A_\mu(\nu) = b_E^{-1}(1+\ord_E(K\mu))$. We then extend $A_\mu$ to
$\Gamma(\mu)$ by imposing that it is affine on each edge.

We claim that the functions $A_\mu$ patch together and define a global function $A$ on $\cVqm$. To see that, one only needs to check that $A_\mu$ and $A_{\mu'}$ coincide
when $\mu'$ is obtained from $\mu$ by blowing up a single point. If $\varpi \= \mu^{-1} \circ \mu'$ and $E$ is the exceptional divisor of $\varpi$, then
$K\mu' = \varpi^* K\mu + E$. A direct computation leads to $A_{\mu'} = A_{\mu}$ on all vertices
of $\Gamma(\mu')$ belonging to $\Gamma(\mu)$ which implies $A_{\mu'} = A_{\mu}$ on $\Gamma(\mu)$. We then extends $A$ to $\hcVqm$ by imposing~(1).

The function one has obtained clearly satisfies (1), (2) and (3).
For this function $A$, property (4) is a consequence of the inequality $A(\nu) \ge A \circ r_\mu(\nu)$ for any good resolution and any $\nu\in \hcVqm$. As before, one reduces the proof of this inequality to the case $\nu$ is the divisorial valuation associated to the blow-up at a point $p\in\mu^{-1}(0)$. When $p$ is the intersection of two exceptional primes, then $\nu$ belongs to $\Gamma(\mu)$ and thus $r_\mu(\nu) = \nu$. Otherwise, it belongs to a unique prime $E$, and one has $A(\nu) = A(\nu_E) + b_E^{-1} > A(\nu_E) = A(r_\mu(\nu))$.
\end{proof}
From properties (3) and (4), one deduces the following important fact:
\begin{cor}\label{cor:thin}
The thinness is bounded from below on $\cV_X$, and attains its minimum.
The subset where the minimum is attained is a finite union of segments in $\cVqm$
which is included in the intersection $\cap_\mu \Gamma(\mu)$ taken over all good resolutions of $(X,0)$.
\end{cor}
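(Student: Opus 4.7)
The plan is to combine the lower semicontinuity from property~(4) with the explicit increment formula for a smooth-point blowup (already used at the end of the previous proof) to first produce a minimum and then force every minimizer to lie on every dual graph.

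\emph{Step 1 (existence of the minimum).} By property~(4), $A$ is lower semicontinuous on the compact space $\cV_X$, and properties~(1) and~(2) give $A(\nu_E) = b_E^{-1}(1+\ord_E(K\mu)) < +\infty$ at every divisorial valuation; hence $A$ is bounded below and attains its infimum $m$ on $\cV_X$. Set $S := A^{-1}(m)$.

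\emph{Step 2 (localization on every dual graph).} The heart of the proof is to show that $S \subseteq \Gamma(\mu)$ for every good resolution $\mu$. Given $\nu \in S$, property~(4) yields $A(r_\mu(\nu)) \le A(\nu) = m$, and minimality forces equality. Suppose $\nu \neq r_\mu(\nu)$. The identification $\cV_X = \varprojlim_{\mu'} \Gamma(\mu')$ then provides $\mu' \succeq \mu$ with $\nu' := r_{\mu'}(\nu) \in \Gamma(\mu') \setminus \Gamma(\mu)$ and still $A(\nu') = m$. One reaches a contradiction by showing $A > m$ on all of $\Gamma(\mu') \setminus \Gamma(\mu)$. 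For this, factor $\mu' \to \mu$ into a finite chain of point blowups and induct. A node blowup at a crossing $E \cap E'$ only refines an existing edge of the current dual graph and adds no new point to the underlying subset of $\cV_X$; a smooth-point blowup of a prime $E$ creates a new prime $E''$ adjacent to $\nu_E$, with $b_{E''} = b_E$ and $A(\nu_{E''}) = A(\nu_E) + b_E^{-1}$, the identity appearing at the end of the previous proof. Combined with the observation that a convex combination of values $\ge m$ with at least one value $> m$ remains $> m$, an induction on the number of blowups shows that every vertex of $\Gamma(\mu')$ lying in $\Gamma(\mu') \setminus \Gamma(\mu)$ has thinness strictly greater than $m$. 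Affinity of $A$ on each edge then propagates this inequality to the whole subset $\Gamma(\mu') \setminus \Gamma(\mu)$, contradicting $A(\nu') = m$.

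\emph{Step 3 (structure of $S$).} From Step~2, $S \subseteq \bigcap_\mu \Gamma(\mu)$ and $S = S \cap \Gamma(\mu)$ for every good resolution $\mu$. This last set is the minimum locus of the continuous piecewise-affine function $A|_{\Gamma(\mu)}$ on the finite graph $\Gamma(\mu)$, hence a finite union of vertices and closed edges, i.e.\ a finite union of segments of $\cVqm$. The main obstacle in this plan is the monotonicity argument in Step~2: property~(4) is pointwise, but what is required is the uniform geometric statement that thinness strictly exceeds $m$ on every newly created branch. The arithmetic input $A(\nu_{E''}) = A(\nu_E) + b_E^{-1}$ is in hand; the delicate part is the combinatorial bookkeeping along chains of alternating smooth-point and node blowups, and the passage from strict inequality at vertices to strict inequality on entire edges via piecewise affinity.
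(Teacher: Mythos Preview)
Your argument is correct and is the natural detailed unpacking of what the paper leaves implicit --- the paper simply asserts that the corollary follows from properties~(3) and~(4) of the preceding proposition, together with the increment $A(\nu_{E''})=A(\nu_E)+b_E^{-1}$ established at the end of its proof. The bookkeeping you flag in Step~2 is not a genuine obstacle: since a node blowup leaves the underlying subset of $\cV_X$ unchanged while each smooth-point blowup adjoins a half-open edge on which $A$ is affine and strictly exceeds its value at the attachment point, the induction hypothesis ``$A>m$ on the current dual graph minus $\Gamma(\mu)$'' is preserved at every step without further case analysis.
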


\medskip

\noindent \emph{The Jacobian formula}.
Recall that $JF$ is the Weil divisor defined on the regular part of $X$ by the vanishing of the Jacobian determinant of $F$.
\begin{prop}
Let $F: (X,0) \to (Y,0)$ be a finite holomorphic germ between two normal surface singularities.
Then one has:
\begin{equation}\label{e:jac}
 A(F_*\nu) = A(\nu) + \nu(JF)~,
\end{equation}
for any valuation $\nu$ centered at $0$ in $X$. 
\end{prop}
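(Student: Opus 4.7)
The plan is to establish~\eqref{e:jac} first for divisorial $\nu$ via a comparison of $2$-forms on a common resolution, then to propagate it to all quasi-monomial valuations by piecewise affinity, and finally to arbitrary $\nu\in\hcV_X$ using the monotone-limit description $A=\sup_\mu A\circ r_\mu$ from property~(4).

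For the divisorial case, normalize $\nu=b_E^{-1}\mu_*\ord_E$ attached to a prime $E\in\cE(\mu)$ for some good resolution $\mu\colon\hat X\to X$. Pick a good resolution $\pi\colon\hat Y\to Y$ and, by further blowing up points on $\hat Y$ if necessary, arrange that $F$ lifts to a regular finite morphism $\hat F\colon\hat X\to\hat Y$ with $\hat F(E)=E'$ a prime of $\cE(\pi)$ rather than a point. Let $k_E$ denote the ramification index of $\hat F$ along $E$. A direct pushforward computation on local generators gives $F_*\nu=(k_E/b_E)\,\pi_*\ord_{E'}$, whence $A(F_*\nu)=(k_E/b_E)(1+\ord_{E'}(K\pi))$ by $1$-homogeneity. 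Now pick any holomorphic $2$-form $\omega_Y$ on $(Y,0)$, with Weil divisor $C_Y$, and set $\omega_X=F^*\omega_Y$, whose divisor on $X$ is $F^*C_Y+JF$. Expand the equality $\Div(\mu^*\omega_X)=\Div(\hat F^*\pi^*\omega_Y)$ of divisors on $\hat X$ via the defining relations of $K\mu$ and $K\pi$ together with the ramification formula $\Div(\hat F^*\pi^*\omega_Y)=\hat F^*\Div(\pi^*\omega_Y)+R(\hat F)$ for the smooth-to-smooth morphism $\hat F$. Taking $\ord_E$ of both sides and using (a)~strict transforms of Weil divisors on $X$ vanish along the exceptional $E$, (b)~$\ord_E(\hat F^*D)=k_E\ord_{E'}(D)$ for any divisor $D$ supported on $\cE(\pi)$, (c)~$\ord_E R(\hat F)=k_E-1$ by the tame picture at a generic point of $E$, and (d)~Mumford's numerical pullback is compatible with the composition $F\circ\mu=\pi\circ\hat F$, collapses the identity to
\[
\ord_E(K\mu)+b_E\,\nu(JF)\;=\;k_E\ord_{E'}(K\pi)+(k_E-1).
\]
Dividing by $b_E$ and substituting the formulas for $A(\nu)$ and $A(F_*\nu)$ gives precisely~\eqref{e:jac}.

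For the extension, on each graph $\Gamma(\mu)$ (after refining $\mu$ to resolve $JF$ and so that $F_*\Gamma(\mu)$ is contained in some target graph $\Gamma(\pi)$) property~(3) of $A$ together with Proposition~\ref{p:maps} imply that both $\nu\mapsto A(\nu)+\nu(JF)$ and $\nu\mapsto A(F_*\nu)$ are piecewise affine, so equality on the dense divisorial subset propagates throughout $\cVqm$; for a general $\nu\in\hcV_X$, the identity then passes to the limit via $A=\sup_\mu A\circ r_\mu$ and the corresponding monotone approximation of $\nu(JF)$ in terms of the retractions. The principal technical obstacle is item~(d) above, i.e.\ the compatibility of Mumford's numerical pullback with the composition $F\circ\mu=\pi\circ\hat F$ when $C_Y$ is a non-Cartier Weil divisor on $Y$; this is the one genuinely new feature compared with the smooth Jacobian formula of~\cite{eigenval}, and also the reason the argument is sensitive to the choice of auxiliary $2$-form $\omega_Y$.
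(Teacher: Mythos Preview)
Your approach is essentially the paper's: reduce to divisorial valuations and compare orders of vanishing of a pulled-back $2$-form along $E$ computed via the two factorizations $\mu^*F^*\omega_Y$ and $\hat F^*\pi^*\omega_Y$. The resulting identity and the conclusion match the paper's computation line for line (the paper works with the unnormalized $\nu=\mu_*\ord_E$, but this is cosmetic). Your item~(d) is exactly the point the paper isolates, and it is dispatched there in one sentence: the two pullbacks $\hat F^*\pi^*C_Y$ and $\mu^*F^*C_Y$ agree outside the exceptional locus of $\mu$ and are both numerically trivial, hence equal.

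One slip: you cannot in general arrange the lift $\hat F\colon\hat X\to\hat Y$ to be \emph{finite}. Blowing up $\hat Y$ may force further blow-ups on $\hat X$ to keep $\hat F$ regular, and the resulting map will typically still contract some exceptional curves. What you can (and only need to) arrange is that $\hat F$ is \emph{regular} and that the specific prime $E$ is not contracted, i.e.\ $\hat F(E)=E'$ is a divisor; this is done by first choosing $\pi$ so that the center of $F_*\ord_E$ in $\hat Y$ is one-dimensional, and then refining $\mu$ to make $\hat F$ regular. None of your steps (a)--(d) actually use finiteness of $\hat F$, so the argument survives unchanged once you replace ``finite'' by ``regular''. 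Your extension to general $\nu$ via piecewise affinity on $\Gamma(\mu)$ and then the monotone description $A=\sup_\mu A\circ r_\mu$ is more explicit than the paper, which simply asserts that the divisorial case suffices.
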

\begin{proof}
It is sufficient to prove the formula for a divisorial valuation. 
We thus suppose that $\mu$ (resp. $\mu'$) is a good resolution of $X$ (resp. $Y$);
that $\hF \= (\mu')^{-1} \circ F \circ \mu$ is regular; that $\nu = \mu_* \ord_E$ for some
$E\in \cE(\mu)$; and that  $\hF(E) = E'$ for some $E'\in \cE(\mu')$. We denote by $k\in\N^*$ the unique integer such that $\hF^*[E']-k[E]$ has no support on $E$.

Pick $\om$ a holomorphic $2$-form on $Y$, and let $C$ be the (Weil) divisor of $\om$. 
Set $(\mu')^* C = (\mu'_*)^{-1}(C) + Z$ where $(\mu'_*)^{-1}(C)$ is the strict transform of $C$ and $Z$ is the unique exceptional divisor such that $(\mu')^* C$ is numerically trivial. Then the holomorphic $2$-form $\Om \= \hF^* (\mu')^* \om$ vanishes along $E$ at order
$$
(k-1) + \ord_E (\hF^* K\mu') + \ord_E( \hF^* (\mu')^* C)~.
$$
On the other hand, $\Om = \mu^* F^* \om$ hence the order of vanishing of $\Om$ along $E$ also equals:
$$
\ord_E(\mu^* F^{-1}(C)) + \ord_E(K\mu) + \ord_E (\mu^* JF)
$$
Let us now introduce $b_E, b_{E'}$ the respective generic multiplicities of $E$ and $E'$, so that
$b_E A(\nu_E) = 1 + \ord_E(K\mu)$ (and similarly for $E'$). Note that
$\ord_E(\hF^* K\mu') = (F_*\ord_E) (K\mu') = k \ord_{E'} (K\mu)$.
We infer
\begin{multline*}
(k-1) + k (b_{E'}\, A(\nu_{E'}) -1) + \ord_E ( \hF^*(\mu')^* C) =\\
= \ord_E(\mu^* F^{-1} (C)) + (b_E\, A(\nu_E) -1)
+  \ord_E(\mu^* JF)
~.
\end{multline*}
The two divisors $\hF^* (\mu')^*C$ and $\mu^* F^{-1}(C)$ are equal because they coincide outside the exceptional locus of $\mu$ and are both numerically trivial. 
We conclude by noting that $b_E \times c(F,\nu_E) = \ord_E(\mu^* F^*\fm_Y) = \ord_E ( \hF^* (\mu')^* \fm_Y) = b_{E'} \times k$.
\end{proof}

\noindent \emph{Action of $F$ on Cartier and Weil classes.}
In the whole discussion, we assume $F: X \to X$ is a finite holomorphic germ. 
For any Weil class $Z$, one can define $F_*Z$ as follows.
Pick some good resolution $\mu_0$, and take $\mu$ such that the lift
$\hF \= \mu_0^{-1} \circ F \circ \mu$ is holomorphic. Then we set $(F_*Z)_{\mu} \= \hF_* (Z_{\mu_0})$.
This does not depend on the choice of $\mu$, and the map $F_*$ is easily seen to be continuous
on $W(\fX)$.

In a similar way, the operator $F^*$ is naturally defined on Cartier classes.
If $Z$ is determined in $X_{\mu_0}$, and $\mu$ is a good resolution such that
$\hF\= \mu_0^{-1} \circ F \circ \mu$ is regular, then we set $F^*Z$ to be the Cartier divisor determined
in $X_\mu$ by $\hF^* Z(\mu_0)$. 

One can show $F^*$ extends continuously to $W(\fX)$, and that $F_*$ preserves the space of Cartier divisors, see~\cite{deggrowth}. For any pair $Z,W$ with one divisor being Cartier, one then
has the following two identities: 
\begin{eqnarray}
F^*Z \cdot W &=& Z \cdot F_* W ~,\label{e:pp}\\
F^* Z \cdot F^*W &=& e \times (Z \cdot W) ~.\label{e:dual2}
\end{eqnarray}
The action on the dual divisors is given as follows.
\begin{lem}\label{l:action-dual}
Pick any valuation $\nu \in \cVqm$. Then 
$F_* Z_\nu = Z_{F_*\nu}$; and $F^*Z_\nu = \sum_{F_\bullet \nu_k = \nu} a_k Z_{\nu_k}$
for some real numbers $a_k >0$.
\end{lem}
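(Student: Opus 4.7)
The first identity follows from Proposition~\ref{p:depend} and the adjunction~\eqref{e:pp}. Since $F^*$ preserves Cartier classes, any Cartier divisor $W$ with local equation $f$ pulls back to the Cartier divisor $F^*W$ with local equation $f\circ F$, so $\nu(F^*W) = (F_*\nu)(W)$. Combining with~\eqref{e:pp},
\begin{equation*}
F_* Z_\nu \cdot W = Z_\nu \cdot F^*W = \nu(F^*W) = (F_*\nu)(W) = Z_{F_*\nu} \cdot W
\end{equation*}
for every Cartier $W$; the negative definiteness of the intersection form on each $\Div(X_\mu)$ then forces $F_*Z_\nu = Z_{F_*\nu}$ as Weil classes.

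For the second identity, I would begin with the divisorial case $\nu = \nu_{E'}$, with $E'\in\cE(\mu')$ for a good resolution $\mu' : X_{\mu'}\to X$ of the target. Choose a good resolution $\mu : X_\mu \to X$ of the source making $\hF \= (\mu')^{-1}\circ F\circ\mu$ regular. The incarnation of $F^*Z_\nu$ in $X_\mu$ is then $b_{E'}^{-1}\hF^* Z_{E'}$, and the standard projection formula on the smooth surface $X_\mu$ yields
\begin{equation*}
\hF^* Z_{E'} \cdot E = Z_{E'} \cdot \hF_* E = \deg(\hF|_E)\cdot\delta_{\hF(E),\, E'}
\end{equation*}
for every $E\in\cE(\mu)$: this is positive precisely when $\hF$ sends $E$ dominantly onto $E'$, and vanishes otherwise (including when $\hF$ contracts $E$). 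Since $F_\bullet\nu_E = \nu_{E'}$ exactly in the dominant case, setting $a_E\=b_E b_{E'}^{-1}\deg(\hF|_E) > 0$ makes $\sum_{F_\bullet\nu_E = \nu_{E'}} a_E Z_{\nu_E}$ and $F^*Z_\nu$ share the same intersection with every $E_0\in\cE(\mu)$. Negative definiteness then gives equality of their incarnations in $X_\mu$; choosing $\mu$ arbitrarily fine, push-forward compatibility of Weil classes propagates this to equality in every model.

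For a general quasi-monomial $\nu$, I would extend by continuity. Proposition~\ref{p:depend}, via the explicit construction~\eqref{e:def-lapl}, provides continuity of $\nu\mapsto Z_\nu$ along each edge of $\Gamma(\mu')$; the operator $F^*$ is continuous on $W(\fX)$; and by Lemma~\ref{l:finite} the fiber $F_\bullet^{-1}\{\nu\}$ is finite. The main obstacle is tracking this fiber continuously as $\nu$ varies: working in a good resolution where $\hF$ is monomial (modulo units) at each preimage of the center of $\nu$, a monomial $\nu$ with weights $(s,t)$ admits preimages $\nu_k$ that are themselves monomial, with weights depending linearly on $(s,t)$ and coefficients $a_k(s,t)$ given by explicit positive rational expressions. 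Approximating $\nu$ by divisorial valuations (rational $(s,t)$) and passing to the limit, using the already-established divisorial case, yields the general identity.
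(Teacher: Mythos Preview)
Your argument is correct and parallels the paper's own proof. The paper likewise reduces to the divisorial case by continuity and, for the second identity, takes resolutions with $\hF=(\mu')^{-1}\circ F\circ\mu$ regular and identifies the coefficient of $Z_E$ in $F^*Z_{E'}$ as the topological degree of $\hF|_E:E\to E'$ (which, after your normalization by $b_E/b_{E'}$, is exactly your $a_E$); your treatment of the first identity via~\eqref{e:pp} and the defining property of $Z_\nu$ is marginally more direct than the paper's, but the substance is the same.
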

Note that the right hand side of the second equation is a finite sum thanks to Lemma~\ref{l:finite}.
\begin{proof}
By continuity it is sufficient to treat the divisorial case. The first equation is a direct consequence 
of~\eqref{e:pp}.  For the second equation, pick models $X_\mu$, $X_{\mu'}$ 
such that $\hF \= (\mu')^{-1} \circ F \circ \mu$ is holomorphic and $Z_\nu$ is determined in $X_{\mu'}$, i.e.
the center of $\nu$ in $X_{\mu'}$ is a prime divisor $E'$.
For any prime divisor $E\in X_\mu$, let $a_E$ be the topological degree of the restriction map $\hF : E \to E'$. By convention we set $a_E =0$ if $\hF(E) \not= E'$.
Then $F^*Z_{E'} = \sum a_E Z_E$ from which one easily infers the second statement of the lemma.
\end{proof}

\subsection{Special singularities}\label{s:special}
Let us begin with the following three classical definitions.
\begin{defi}
A normal surface singularity is called a \emph{simple elliptic} singularity
if the exceptional set of its minimal desingularization is a smooth elliptic curve.
\end{defi}
\begin{defi}
A normal surface singularity is called a \emph{cusp} singularity if 
the exceptional set of its minimal desingularization $\mu: \hX \to X$ is a cycle of rational curves.
This means: either $\mu^{-1}(0)$ is an irreducible  rational curve of self-intersection $\le -1$ with one nodal singularity; or $\mu^{-1}(0)$ is reducible, all components are smooth rational curves, and the dual graph of  $\mu^{-1}(0)$ is a polygon.
\end{defi}

\begin{defi}
A normal surface singularity $(X,0)$ is said to be Kawa\-mata-log-terminal (klt in short) if
$A(\nu) > 0$ for all $\nu \in \hcV_X$.
It is log-canonical (lc in short) if $A(\nu) \ge 0$ for all $\nu \in \hcV_X$.
\end{defi}
Thanks to Corollary~\ref{cor:thin}, it is sufficient to check the condition $A>0$ (or $A\ge 0$) on all edges
of the minimal resolution. In particular $(X,0)$ is klt iff there exists $A_0>0$ such that $A(\nu) \ge A_0$ for all $\nu \in \cV_X$.

Unwinding the definition of $A$ for divisorial valuations in terms of the relative canonical divisor, one sees that the two notions of klt and lc singularities coincide with the standard ones. It is well-known 
that klt and lc singularities in dimension $2$ can be characterized geometrically in a simple way, see~\cite[Theorem~9.6]{kawamata} for a proof.

\newpage

\begin{thm}
~
\begin{itemize}
 \item 
A normal surface singularity $(X,0)$ is klt iff it is a quotient singularity, i.e. it is locally
 isomorphic to the quotient of $\C^2$ by a finite subgroup of $\mathrm{GL}(2,\C)$.
\item
 A normal surface singularity $(X,0)$ is lc but not klt iff 
it is either simple elliptic, a cusp, or a finite quotient of these.
\end{itemize}
\end{thm}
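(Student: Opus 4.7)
The plan splits into two directions. The remark preceding the theorem identifies our valuation-theoretic definitions with the classical Mori-theoretic notions via $A(\mu_*\ord_E) = 1 + \ord_E(K\mu)$ on divisorial valuations, so the content of the statement is the geometric classification.

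For the forward direction, consider a finite $G \subset \mathrm{GL}(2,\C)$ and the quotient $\pi: \C^2 \to X = \C^2/G$. Replacing $G$ by its quotient modulo the normal subgroup of pseudo-reflections (whose quotient is again smooth by the Shephard--Todd--Chevalley theorem), we may assume $\pi$ is \'etale in codimension $1$, so $J\pi = 0$. By Lemma~\ref{l:finite} every normalized $\nu \in \cV_X$ lifts to some $\tilde\nu \in \cV_{\C^2}$ with $\pi_\bullet \tilde\nu = \nu$, and the Jacobian formula~\eqref{e:jac} then gives $A_X(\nu) = c(\pi,\tilde\nu)^{-1} A_{\C^2}(\tilde\nu) > 0$, using that $A_{\C^2} \ge 2$ on $\cV_{\C^2}$ (an elementary monomial computation: the minimum is attained at the multiplicity valuation $\ord_0$, where $A = 2$). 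For simple elliptic or cusp germs, adjunction on the minimal resolution forces every exceptional discrepancy to equal $-1$: if $E$ is elliptic of self-intersection $e<0$ then $0 = 2g-2 = e + ae$ forces $a=-1$; in a cycle of smooth rational curves the identity $K\mu = -\sum E_i$ is checked directly using that each $E_i$ meets exactly two neighbors. Consequently $A$ vanishes along the entire dual graph, and property~(4) of the proposition defining $A$ then yields $A \ge 0$ on all of $\hcV_X$. Finite quotients of such germs remain lc by the same Jacobian argument.

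For the converse, let $\mu: \hX \to X$ be the minimal resolution with exceptional primes $\{E_i\}$ and discrepancies $a_i = \ord_{E_i}(K\mu)$. Under the klt hypothesis $1+a_i > 0$ for all $i$. Combining the adjunction identity
\[
  2g_i - 2 = (1+a_i) E_i^2 + \sum_{j\neq i} a_j (E_i \cdot E_j)
\]
with the negative-definiteness of the intersection form and with the fact (derived using Lemma~\ref{l:neg}) that on the minimal resolution of a klt germ all $a_j \le 0$, one concludes $2g_i - 2 < 0$, so every $E_i$ is rational; a parallel estimate rules out cycles in the dual graph, which must therefore be a tree. The problem reduces to classifying negative-definite weighted trees of smooth rational curves whose discrepancy vector (uniquely determined by the intersection data) satisfies $a_i > -1$ everywhere. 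The resulting list is the classical one---cyclic, binary dihedral, tetrahedral, octahedral and icosahedral---and each graph is explicitly realized as a quotient of $\C^2$ by a finite subgroup of $\mathrm{GL}(2,\C)$ via invariant theory. In the lc-but-not-klt case some $1+a_i$ vanishes, and the same adjunction estimate forces either a single elliptic curve (simple elliptic germ) or a cycle of smooth rational curves (cusp germ); their finite quotients account for the remaining lc singularities.

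The main obstacle lies in the combinatorial classification of admissible weighted trees and the explicit identification of each as an invariant-theoretic quotient of $\C^2$. This exhaustive case-by-case analysis constitutes the bulk of the classical proof, which is why we defer to~\cite[Theorem~9.6]{kawamata} rather than reproduce it here.
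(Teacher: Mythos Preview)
Your proposal and the paper both ultimately defer the core classification to~\cite[Theorem~9.6]{kawamata}, so neither is a self-contained proof. That said, the routes sketched are genuinely different, and it is worth recording the comparison.

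For the forward direction you make elegant use of the paper's own machinery: the Jacobian formula~\eqref{e:jac} together with the surjectivity in Lemma~\ref{l:finite} gives $A_X>0$ (resp.\ $\ge 0$) directly from the corresponding inequality on $\C^2$ (resp.\ on the simple elliptic or cusp cover). The paper's comment does not spell out this direction at all, so your argument is a pleasant addition that stays entirely within the valuation-theoretic framework.

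For the converse the two sketches diverge. The paper reduces via the \emph{index-one cover}: a rational lc singularity is $\Q$-Gorenstein, its canonical cyclic cover is lc and Gorenstein, and rational Gorenstein surface singularities are exactly the Du~Val singularities, i.e.\ quotients of $\C^2$ by finite subgroups of $\mathrm{SL}(2,\C)$. You instead run a direct adjunction argument on the minimal resolution to force all exceptional components to be rational and the dual graph to be a tree, then hand off the combinatorial classification of weighted trees to Kawamata. Both strategies are standard; the cover argument is shorter and explains structurally why finite quotients appear, while your approach is more elementary but leaves more case analysis to the reference.

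One point in your lc-but-not-klt sketch is too hasty. You assert that adjunction ``forces either a single elliptic curve or a cycle of smooth rational curves''. This misses the configurations in case~(3) of Proposition~\ref{p:class}: trees of rational curves with one or two branch points, where $A=0$ only on the branch locus. Your adjunction inequality $2g_i-2 = (1+a_i)E_i^2 + \sum_{j\ne i}a_j(E_i\cdot E_j)$ with some $1+a_i=0$ does not by itself rule these out. They are, as you say, finite quotients of simple elliptic or cusp germs, but establishing \emph{that} is precisely where the index-one cover (or the detailed case analysis in~\cite{alexeev,kawamata}) is needed; it does not fall out of the adjunction estimate alone. Since you defer to~\cite{kawamata} anyway this is not fatal, but the sentence as written overstates what the direct argument yields.
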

Let us comment briefly on the proof.
One proves that a lc singularity is either rational or simple elliptic or a cusp,~\cite[Lemma~9.3]{kawamata}.
When it is rational, then it is  $\Q$-factorial, hence also $\Q$-Gorenstein. One can then construct a finite cyclic cover of $X$ which is lc and Gorenstein (which is referred to as the log-canonical cover). One concludes using the fact that rational Gorenstein singularities are quotients of $\C^2$ by finite subgroups of $\mathrm{SL}(2,\C)$.
The following can be extracted from the approach to the classification of lc singularities explained
in~\cite{alexeev}, see also~\cite[chapter~4]{kollar-mori}.
\begin{prop}\label{p:class}
Suppose $(X,0)$ is a lc singularity. Let
$\mu: \hX \to X$ be the  minimal desingularization of $(X,0)$, and set $\cA_0 \= \{ \nu \in \Gamma(\mu)\, , A(\nu) =0 \}$. Then the following holds.
\begin{enumerate}
\item When $(X,0)$ is klt, the set $\cA_0$ is empty.
\item When $(X,0)$ is simple elliptic or a cusp, then $\cA_0 = \Gamma(\mu)$.
\item In all other cases, $\Gamma(\mu)$ is a tree. It has either one branched point in which case
$\cA_0$ is reduced to this branched point; or two branched points of valence $3$ and $\cA_0$ is the segment joining them.
\end{enumerate}
\end{prop}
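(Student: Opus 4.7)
The plan is to handle the three cases in turn. Part~(1) is tautological: by the very definition of klt one has $A>0$ on $\hcV_X$, hence $\cA_0=\emptyset$.

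For Part~(2), I would exploit the Gorenstein property of simple elliptic and cusp singularities together with adjunction on $\hX$ to show that $\ord_{E_i}(K\mu)=-1$ for every exceptional prime $E_i$ of the minimal resolution. For a simple elliptic singularity $\mu^{-1}(0)=E$ is a smooth elliptic curve, so $K_{\hX}\cdot E=-E^2$; writing $K\mu=aE$ and using $\mu^*K_X\cdot E=0$ forces $a=-1$. For a cusp, an analogous computation using adjunction on each component $E_i$ of the exceptional cycle (accounting for the two cycle-neighbors in the polygonal case, or the self-node in the nodal-rational case) shows that $(K\mu+\sum_j E_j)\cdot E_i=0$ for every $i$, and the non-degeneracy of the exceptional intersection form then forces $K\mu=-\sum_j E_j$. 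In both cases $\ord_{E_i}(K\mu)=-1$, so $A(\nu_{E_i})=b_{E_i}^{-1}(1-1)=0$ at every vertex. Since $A$ is affine on each edge of $\Gamma(\mu)$ and vanishes at both endpoints, it vanishes identically on every edge, so $\cA_0=\Gamma(\mu)$.

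The key structural observation for Part~(3) is that log-canonicity plus the affineness of $A$ on edges forces $\cA_0\cap\Gamma(\mu)$ to be the full subcomplex spanned by the (finite) set $V_0\=\{\nu_E:A(\nu_E)=0\}$: a nonnegative affine function on a segment that vanishes at an interior point must vanish identically, hence in particular at both endpoints. So the task reduces to describing $V_0$ together with the edges connecting its members. By assumption $V_0\neq\emptyset$ but $(X,0)$ is neither simple elliptic nor a cusp, so the classification theorem above identifies $(X,0)$ as a nontrivial finite quotient of such a singularity. My plan is then to invoke the explicit enumeration of the resulting dual graphs given in~\cite{alexeev} and~\cite[Chapter~4]{kollar-mori}: in every case on that list $\Gamma(\mu)$ is a tree with either one branched point of arbitrary valence, or two branched points of valence $3$ joined by a chain of non-branching vertices. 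A direct inspection of the log-discrepancies then shows that $\cA_0$ reduces to the single branched point in the first configuration, whereas in the second it comprises the entire chain (vertices and edges alike) connecting the two valence-$3$ branched points.

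The main obstacle is clearly Part~(3). A purely intrinsic slope computation---showing for instance that blowing up a generic smooth point of an exceptional prime $E$ with $A(\nu_E)=0$ yields a new vertex with $A=b_E^{-1}>0$---already implies that $V_0$ cannot propagate into ``free'' tangent directions; but ruling out trees with three or more branched points, or the wrong valencies for the branched points in $V_0$, appears to require the explicit list of dual graphs of non-klt lc rational surface singularities from the cited references.
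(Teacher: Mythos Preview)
Your proposal is correct and matches the paper's approach. The paper in fact gives no self-contained proof of this proposition at all: it simply states that the result ``can be extracted from the approach to the classification of lc singularities explained in~\cite{alexeev}, see also~\cite[chapter~4]{kollar-mori}.'' Your treatment of Parts~(1) and~(2) via adjunction and the negative-definiteness of the exceptional intersection form is the standard direct computation, and for Part~(3) you invoke precisely the same references the paper does; your additional observation that the affineness and nonnegativity of $A$ force $\cA_0$ to be the full subcomplex on $V_0$ is a helpful organizing remark that the paper leaves implicit.
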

In Kawamata's terminology, a divisorial valuation in $\cA_0$ is called a log-canonical place, see~\cite{kawamata2}.

\section{Holomorphic maps on singular surfaces}\label{s:holo}

\subsection{Proof of Theorem~B}
Let $F: X \to Y$ be any finite holomorphic germ between normal surface singularities.
If $(X,0)$ is klt, then~\eqref{e:jac} gives $A(F_*\nu) \ge A(\nu) > 0$ for any 
valuation $\nu\in\hcV_X$. By Lemma~\ref{l:finite}, $F_*: \hcV_X \to \hcV_Y$ is surjective hence
$(Y,0)$ is klt. The same proof works for lc singularities. 

Suppose now $F: X \to X$ is a finite holomorphic self-map of a normal surface singularity.
Denote by $A_0$ the minimum of $A$ on $\cV_X$, and by $N_0$ the minimum of 
$\nu(JF)$ on $\cV_X$.

If $JF$ is not zero, then $N_0 >0$ by Proposition~\ref{p:maps}.
Pick a valuation $\nu \in \cV_X$, and write $F^n_*\nu = c(F^n,\nu) \times F^n_\bullet \nu$
with $F^n_\bullet \nu \in \cV_X$. Recall that $c(F^n,\nu) = \nu (F^{n*} \fm) \ge 1$.
Whence $A(F^n_*\nu) =  A(\nu) + \sum_{k=0}^{n-1} F^k_*\nu(JF) \ge A_0 + n N_0 >0$
for $n > -A_0/N_0$. Again $F$ being finite, then $F_\bullet: \cV_X \to \cV_X$ is surjective hence
$A>0$ on $\cV_X$. This shows $(X,0)$ is klt.

\smallskip

Now assume $JF\equiv 0$. Set $\cA_0 \=  \{\nu\in \cV_X, \, A(\nu) = A_0 \}$. We shall show that when $A_0 \neq 0$ then $F$ is invertible.
 
Suppose first that $(X,0)$ is klt. Then $(X,0)$ is a quotient singularity, hence there exists a finite map $\pi: (\C^2,0) \to (X,0)$ unramified outside $0$. 
Both maps $\pi$ and $F \circ \pi$ induce unramified maps from a punctured small ball around $0$ in $\C^2$ onto a punctured neighborhood $U^*$ of $0$ in $X$. They thus both define the universal cover of $U^*$. Whence $\pi$ and $F\circ \pi$ have the same degree, which implies $F$ to be invertible.

For the rest of the proof we assume that  $A_0 < 0$. 
For any $n\ge0$, define $\cA_n \= F_\bullet^{-n}(\cA_0)$. The Jacobian formula and the property $c(F,\cdot) \ge1$ implies $\cA_1 \subset \cA_0$ and $c(F,\nu) =1$ for all $\nu\in \cA_1$.

Fix $\nu_0 \in \cA_0$. By Proposition~\ref{p:depend} there exist two constants $c_0, c_1 >0$ such that 
$c_0 Z_{\nu_0} \le Z_\nu \le c_1 Z_{\nu_0}$.
Pick $n\ge 1$ and $\nu \in \cA_n$. Then we can 
write $F^{n*}Z_\nu = \sum a_k Z_{\nu_k}$ for some $\nu_k \in \cA_{2n}$ and $a_k >0$ 
by Lemma~\ref{l:action-dual}. 
But $F^n_* F^{n*} Z_\nu = e^n Z_\nu$ and $c(F^n,\nu_k) = 1$ for all $k$ hence
$e^n = \sum a_k$. Now we have
\begin{multline*}
c_1 (Z_{\nu_0}^2) e^n
\ge
(\sum a_k Z_{\nu_k}) \cdot Z_{\nu_0}
= F^{n*} Z_\nu \cdot Z_{\nu_0}
\\
\ge
c_0 c_1^{-1}\, (F^{n*} Z_{\nu_0} \cdot Z_\nu)
=
c_0 c_1^{-1}\,c(F^n, \nu)\, (Z_{\nu_0} \cdot F^n_\bullet Z_\nu)
\ge
c_0^2 c_1^{-1}\, (Z_{\nu_0}^2)
\end{multline*}
Note that $Z_{\nu_0}^2$ is negative (see Lemma~\ref{l:neg}). Letting $n\to\infty$, we get $e=1$.

%

\subsection{Proof of Corollary~C}
We shall rely on the following result.
\begin{prop}\label{p:lift}
 Suppose $F: (X,0) \to (X,0)$ is a finite germ of topological degree $e \ge2$.
Suppose $(X,0)$ is lc but is not a cusp. Then there exists a proper modification
$\bar{\mu}: \bar{X} \to X$ such that $\bar{X}$ has only klt singularities and $F$ lifts to $\bar{X}$ as a holomorphic map.
\end{prop}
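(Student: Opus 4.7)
The klt case is immediate: take $\bar{\mu} = \mathrm{id}$. So assume $(X,0)$ is lc but not klt. The plan is to identify a finite $F_\bullet$-invariant set $S$ of divisorial valuations along which the thinness vanishes, and to define $\bar{X}$ as the modification of $X$ extracting exactly the divisors in $S$. First, since $F$ is a finite non-invertible self-map and $(X,0)$ is not klt, Theorem~B~(3) forces $JF = 0$. The Jacobian formula~\eqref{e:jac} then gives $A \circ F_* = A$, so $A(F_\bullet \nu) = A(\nu)/c(F,\nu) \le A(\nu)$ on $\cV_X$; combined with the surjectivity of $F_\bullet$ from Lemma~\ref{l:finite}, this yields $F_\bullet(\cA_0) = \cA_0$, where $\cA_0 \= \{\nu \in \cV_X \mid A(\nu) = 0\}$.

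Second, Proposition~\ref{p:class} describes $\cA_0$ under the assumption that $(X,0)$ is neither klt nor a cusp: either (i) $\cA_0$ consists of a single divisorial valuation $\nu_*$ (the elliptic curve in the simple-elliptic case, or the unique branch point of $\Gamma(\hat\mu_0)$ in the remaining subcase of~(3) of that proposition), or (ii) $\cA_0$ is the segment $[\nu_1,\nu_2]$ joining two valence-$3$ divisorial valuations of the minimal resolution. I would set $S \= \{\nu_*\}$ in case~(i), which is automatically $F_\bullet$-invariant, and $S \= \{\nu_1,\nu_2\}$ in case~(ii). Invariance in case~(ii) would follow from the observation that $\nu_1$ and $\nu_2$ are intrinsically distinguished inside $\cA_0$ as the only points of the valuation space admitting three tangent directions, and this local branching structure must be preserved by the finite-to-one continuous map $F_\bullet$; hence $F_\bullet$ permutes $\{\nu_1,\nu_2\}$.

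Third, let $\bar{\mu}: \bar{X} \to X$ be the proper modification extracting exactly the valuations of $S$, equivalently obtained from the minimal resolution $\hat{X}_0$ by contracting every exceptional prime not represented in $S$. Each maximal chain so contracted consists of smooth rational curves carrying only divisorial valuations with $A > 0$ along it, so it collapses to a cyclic quotient (hence klt) singularity; the remaining exceptional configuration on $\bar{X}$ is either one smooth curve or two smooth curves meeting transversally, and the ambient surface is smooth there. Thus $\bar{X}$ has only klt singularities. Finally, the identity $F_\bullet(S) = S$ means that the bimeromorphic lift $\bar{F} : \bar{X} \dashrightarrow \bar{X}$ sends each exceptional prime of $\bar{\mu}$ onto another exceptional prime, and the finiteness of $F$ forbids $\bar{F}$ from contracting any curve to a point, so a standard normality argument on the (possibly finite) indeterminacy set of $\bar{F}$ promotes it to a regular map. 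The most delicate step is establishing the $F_\bullet$-invariance of $S$ in case~(ii), which requires understanding precisely how $F_\bullet$ interacts with the local tree structure of $\cV_X$ at the divisorial valuations $\nu_1,\nu_2$, and ultimately reduces, via the description of $F^*Z_\nu$ in Lemma~\ref{l:action-dual}, to the fact that $F_\bullet$ cannot collapse a three-pronged vertex of $\cA_0$ onto an interior point of the segment.
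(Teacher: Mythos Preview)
Your overall architecture coincides with the paper's: isolate the branch-point valuations of $\Gamma(\mu_0)$ lying in $\cA_0$, extract exactly those divisors, observe that the contracted chains yield cyclic quotient singularities, and argue that $F$ lifts holomorphically. Your handling of case~(i) is correct and in fact unifies what the paper treats separately (the paper dispatches the simple elliptic case by the ad hoc remark that the image of an indeterminacy point is rational, whereas your argument via $\cA_0=\{\nu_*\}$ being totally invariant works uniformly).

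The genuine gap is in case~(ii), and it is a matter of direction. For $\bar F$ to be holomorphic on $\bar X$ you need the \emph{total} invariance $F_\bullet^{-1}(S)\subset S$, not merely $F_\bullet(S)=S$: an indeterminacy point of $\bar F$ lying on the exceptional prime over $\nu_i$ arises exactly from a divisorial valuation $\nu'\notin S$ with $F_\bullet(\nu')=\nu_i$. Your ``standard normality argument'' does not rescue this --- normality removes codimension-two indeterminacy for holomorphic \emph{functions}, not for rational maps between surfaces (a Cremona involution of $\mathbb{P}^2$ is the obvious counterexample). Likewise your tangent-direction heuristic is pointed the wrong way: even granting that endpoints of $\cA_0$ map to endpoints, the question is whether \emph{interior} points of $\cA_0$ can map to an endpoint, and a continuous finite-to-one surjection of a segment can certainly do that (a tent map). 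Incidentally, ``three tangent directions in $\cV_X$'' does not characterise $\nu_1,\nu_2$ either, since every divisorial valuation has infinitely many tangent directions in $\cV_X$.

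The paper closes this gap with a computation you have not reproduced. From the forward fixing $F_\bullet\nu=\nu$ (so $F_*Z_\nu=c\,Z_\nu$ with $c=c(F,\nu)$) one writes $F^{n*}Z_\nu=\sum a_k Z_{\nu_k}$ with $\nu_k\in F_\bullet^{-n}(\nu)\subset\cA_0\subset\Gamma(\mu)$, invokes the two-sided estimate $c_0 Z_\nu\le Z_{\nu'}\le c_1 Z_\nu$ of Proposition~\ref{p:depend}, and by comparing $F^{n*}Z_\nu\cdot Z_\nu$ with $F_*^nF^{n*}Z_\nu=e^nZ_\nu$ deduces $e=c^2$. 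Then
\[
(F^*Z_\nu-cZ_\nu)^2 \;=\; eZ_\nu^2-2c\,(Z_\nu\cdot F_*Z_\nu)+c^2Z_\nu^2 \;=\; (e-c^2)\,Z_\nu^2 \;=\; 0,
\]
and negative-definiteness forces $F^*Z_\nu=cZ_\nu$, which by Lemma~\ref{l:action-dual} says $F_\bullet^{-1}(\nu)=\{\nu\}$. This total-invariance step is the core of the proof in case~(ii), and it is missing from your proposal.
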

The result is not true in the cusp case as shown in Proposition~\ref{p:cusp} below.
\begin{proof}
Let $\mu: \hX \to X$ be the minimal desingularization of $(X,0)$. If $(X,0)$ is klt there is nothing to prove.
If $\mu^{-1}(0)$ is an elliptic curve, then the lift $\mu^{-1} \circ F \circ \mu$ to $\hX$ is holomorphic since the image of an indeterminacy point is a rational curve.
Otherwise, we are in Case~(3) of Proposition~\ref{p:class}.
Recall that $\Gamma(\mu)$ has one or two branched points, and that the segment joining them
coincides with the set $\cA_0 = \{ A =0 \}$. By Theorem~B, one has $JF=0$, and the Jacobian formula implies $\cA_0$ to be totally invariant. Since $F_\bullet$ is continuous and surjective,
$F_\bullet$ fixes a branched point of $\Gamma(\mu)$ or permute two branched points. For sake of simplicity
we shall assume we are in the former case, i.e.
$F_\bullet \nu = \nu$ for a branched point $\nu$ of $\Gamma(\mu)$ with $A(\nu) =0$.
By Lemma~\ref{l:action-dual}, we may write $F_* Z_\nu = c \times Z_\nu$ with $c=c(F,\nu)$.

Write $F^{n*} Z_\nu = \sum a_k Z_{\nu_k}$ with $a_k >0$ and $\nu_k\in F^{-1}_\bullet\{ \nu\}$.
The Jacobian formula shows  $A(\nu_k) =  0$ and $\nu_k$ belongs to $\Gamma(\mu)$.
Pick $c_0, c_1>0$ such that $c_0\, Z_\nu \le Z_{\nu'} \le c_1 \, Z_\nu$ for all $\nu'\in \Gamma(\mu)$.
Then $e^n Z_\nu = F^n_* F^{n*} Z_\nu =(\sum a_k c(F^n,\nu_k) ) \, Z_\nu$
and $c_0 \times c^n \le c(F^n,\nu_k) \le c_1 \times c^n$ for all $k$ and $n$.
On the other hand, $c^n Z_\nu^2 = F^{n*}Z_\nu \cdot Z_\nu$, so that $c_0\, (\sum a_k) Z_\nu^2 \le c^n Z_\nu^2 \le c_1 \, (\sum a_k) Z_\nu^2$.
Letting $n\to\infty$, we conclude that $e = c^2$.
We may now write $(F^* Z_\nu - c Z_\nu)^2 = e Z_\nu^2 - c^2 Z_\nu^2 =0$. This implies $F^*Z_\nu = c Z_\nu$, hence $\nu$ is totally invariant by $F_\bullet$.

We can now apply the same argument to the other branched point of $\Gamma(\mu)$, and we conclude that all branched points of $\Gamma(\mu)$ are totally invariant. We now blow-down all exceptional divisors of $\mu$ except for the ones associated to the branched points of $\Gamma(\mu)$. In this way, one obtains a surface with cyclic quotient singularities on which $F$ is holomorphic.
\end{proof}

\begin{proof}[Proof of Corollary~C]
Suppose $p\in X$ is a singular point. If its orbit is not totally invariant, then  one can find
a smooth point $q \in X$ such that $F: (X,q) \to (X,p)$ is a finite map.
By Theorem~B~(1), $(X,p)$ is klt. We may thus assume that $p$ is
totally invariant by $F$. If the Weil divisor $JF$ is not zero near $p$, then Theorem~B~(3)
implies $(X,p)$ klt. When $JF=0$, then $(X,p)$ is lc by Theorem~B~(4). 
We apply the preceding proposition to conclude.
\end{proof}
~


\subsection{Maps on quotient singularities}\label{s:quotient}
~
\smallskip

\noindent \emph{Finite quotients of $\C^2$.}
Suppose $ X = \C^2 / G$ with $G$ finite and acting linearly on $\C^2$. 
Now pick any morphism $\rho : G \to G$, and a polynomial map $F:  \C^ 2 \to \C^2$ such that 
$F(g\cdot x) = \rho(g) \cdot F(x)$ for all $x\in \C^2$ and all $g \in G$. We moreover assume that $F(0)=0$; and that no curve is contracted to a point. Then $F$ descends as a finite holomorphic map to the quotient space $X$. Its local topological degree at $0\in X$ is equal to the local topological degree of $F$ at $0$.

For any $G$ one can find a finite holomorphic map $F$ of the germ $(\C^2/G,0)$ such that 
in any model dominating $(\C^2/G, 0)$ the map induced by $F$ is not holomorphic. 
One can show that this is the case as soon as the local topological degree is not the square of 
the asymptotic contraction rate as defined in~\cite{eigenval}. A concrete example is given by
  $G = \langle (-x,-y) \rangle$,  and $P(x,y) = ( x^3, y)$.

Note also that one can realize globally these examples on the quotient space $\mathbb{P}^2/G$, see~\cite{cantat}.
A particular instance of this construction is the case of weighted projective spaces.

\smallskip
\noindent \emph{Latt\`es examples.}
There are endomorphisms of abelian surfaces $F: A \to A$ and finite groups $G$ of automorphisms of $A$ such that $A/G$ is rational and $F$ descends to the quotient.

When $G$ is generated by complex reflections, then $A/G$ is a weighted projective space, see~\cite{TY}.
It may happen that $A/G$ is isomorphic to complex projective space\footnote{a classification of these examples is given in~\cite{KTY}}. There are dynamical  characterizations of these examples by Berteloot-Dupont~\cite{berteloot}.

Other examples than weighted projective spaces may appear. This is the case when $A = (\C/\Z[i])^2$, and $G$ is the cyclic group of order $4$ generated by $(z,w) \mapsto (iz, iw)$. Any linear map $(z,w) \mapsto (az+bw, cz+dw)$ with $a,b,c,d \in \N$ and $ ad-bc \neq 0$ induces a holomorphic map on the quotient of topological degree $|ad-bc|^2$.  One can play the same game with $A= (\C/\Z[j])^2$, $j$ a primitive $3$-root of unity, and $G$ of order $3$, $(z,w) \mapsto (jz,jw)$, or of order $6$, $(z,w) \mapsto (-jz, -jw)$.

Another example arises as follows. Take $\zeta$ a primitive $5$-root of unity. Then the ring of integer $\Z[\zeta]$ of the number field $\Q[\zeta]$ embeds as a lattice in $\C^2$ by the morphism $\sigma = (\sigma_1, \sigma_2)$ where $\sigma_1, \sigma_2$ are the two complex embeddings given by $\sigma_i (\zeta) = \zeta^i$, $i=1,2$. The quotient $\C^2/\sigma (\Z[\zeta])$ is an abelian variety, on which a cyclic group of order $5$ acts naturally generated by $(z,w) \mapsto (\zeta z,\zeta ^2 w)$. The quotient variety is  rational, and for any $\a \in \Z[\zeta]$, the map $(z,w) \mapsto (\sigma_1(\a) z , \sigma_2(\a) w)$ descends to the quotient and induces there a holomorphic map of topological degree the norm $N(\a)$.


\subsection{Maps on simple elliptic singularities}\label{s:elliptic}

Pick any smooth elliptic curve $E$, and $\phi: E \to E$ an endomorphism 
whose lift to the universal cover $\simeq \C$ is given by $z \mapsto \a \, z$ with $\a \in \C^*$. 
The topological degree of $\phi$ is then given by $d\= |\a|^2$. 
For any $a \in \Z$, denote by $\pic_a$ the affine space consisting of all line bundles of degree $a$ on $E$.
The map $L \mapsto \phi^* L \otimes (L^{-1})^{\otimes d}$ is an affine map sending $\pic_a$ to $\pic_0$ whose
linear part is given by $\bar{\a} - d$.
Hence for any $a$, there exists a line bundle $L$ on $E$ of degree $a$ and such that $\phi^*L $ is isomorphic to $L^{\otimes d}$. The map $\phi$ hence lifts as a holomorphic map $\hat{\phi}$ from the total space $X(L^{\otimes} d)$ of $L^{\otimes d}$ to $X(L)$, and acts linearly on the fibers. There exists a natural map $X(L) \to X(L^{\otimes d})$ fixing each point of the base $E$, and acting as $ \zeta \mapsto \zeta^d$ in the fibers.
Composing with $\hat{\phi}$ we get a holomorphic map $F: X(L) \to X(L)$. The zero section $E_0$ in $X(L)$ has self-intersection $a$. If $a$ is negative,
we may contract it to a simple elliptic surface singularity $X$, and $F$ descends to a finite holomorphic map on $X$.

Note that we may compactify $X(L)$ by adding a point at infinity in each fiber to get a ruled surface $\bar{X}(L) \to E$, such that $F$ extends as a holomorphic map on this surface.
Its topological degree is equal to the square of its asymptotic degree which is given by $d$, see Proposition~\ref{p:fibr}.

Building on the previous example and keeping the same notation, take $G$ a finite non-trivial group of automorphisms of $E$ commuting with $\phi$ and such that $g^* L \simeq L$ for any $g \in G$.
The group lifts to an action on $X(L)$ and on the surface $X$ obtained after contracting the zero section of $L$ which is linear in the fibers. The map $F$ induces holomorphic maps on the resulting quotient surfaces $X(L)/G$ and $X/G$.
Note that $X/G$ has a single lc singularity, whereas $X(L)/G$ has quotient singularities.
Finally note that the projective surface $\bar{X}(L) / G$ is rational since it is ruled over $E/G \simeq \P^1$.

For any fixed couple $(E,\phi)$ there exists a group of order $2$ satisfying these conditions. If $E \simeq \C/\Z[i]$, $i^2 =-1$,  (resp. $E \simeq \C/\Z[j]$ with $j^3=1$) and any endomorphism on these curves, then there exists a group of order $4$ (resp. $3$ and $6$) satisfying these requirements.


\subsection{Finite maps on cusp singularities}

It is not difficult to see that any cusp singularity admits a non-invertible finite holomorphic self-map.
We aim at proving the following more precise result.
\begin{prop}\label{p:cusp}
There exists a cusp singularity $(X,0)$ and a non-invertible finite holomorphic self-map $F: (X,0) \to (X,0)$ such that for any non-trivial modification $\bar{\mu}: \bar{X}\to X$ with $\bar{X}$ normal, the lift $\bar{F} \= \bar{\mu}^{-1} \circ F \circ \bar{\mu}$ is not holomorphic on $\bar{X}$.
\end{prop}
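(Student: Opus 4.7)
The plan is to construct an explicit example via the Hirzebruch–Inoue model of a cusp and rule out every non-trivial holomorphic lift by combining the intersection-theoretic argument from the proof of Proposition~\ref{p:lift} with a norm identity in a real quadratic field. Take $K = \Q(\sqrt 2)$ with ring of integers $\cO_K = \Z[\sqrt 2]$ and totally positive fundamental unit $\lambda = 3+2\sqrt 2$; embed $\cO_K$ as a lattice in $\R^2$ via the two real embeddings $\sigma_1,\sigma_2$, and let $\cO_K \rtimes \langle\lambda\rangle$ act on the tube domain $T = \R^2 + i \R_{>0}^2$ by real translations and by the matrix $\mathrm{diag}(\lambda,\lambda^{-1})$ respectively. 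The germ at infinity of the quotient is a cusp singularity $(X,0)$. Choose $\mu := 2+\sqrt 2 \in \cO_K$: this element is totally positive with norm $N(\mu) = \mu\mu' = 2$. The map $(z_1,z_2)\mapsto(\mu z_1, \mu' z_2)$ preserves the tube and commutes with the $\cO_K \rtimes \langle\lambda\rangle$-action (by commutativity of $K$ and $\mu\cO_K\subset\cO_K$), so it descends to a finite non-invertible holomorphic self-map $F:(X,0)\to(X,0)$ of topological degree $e = 2$.

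Now suppose $\bar F$ is holomorphic on some non-trivial normal modification $\bar{\mu} : \bar X \to X$. Then $\bar F$ permutes the prime exceptional divisors of $\bar{\mu}$, so the set $S \subset \cV_X$ of associated divisorial valuations is finite, non-empty, and $F_\bullet$-invariant. After replacing $F$ by a suitable iterate $G := F^k$ we can fix $\nu \in S$ with $G_\bullet \nu = \nu$. Because $\bar F$ is holomorphic, the preimages $\nu_j$ appearing in the expansion $G^{*n}Z_\nu = \sum_j a_j Z_{\nu_j}$ supplied by Lemma~\ref{l:action-dual} all belong to $S$; in particular they lie in a common dual graph $\Gamma(\mu_1)$ for some good resolution $\mu_1$ of $X$. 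Proposition~\ref{p:depend} then provides constants $c_0, c_1 > 0$ with $c_0 Z_\nu \le Z_{\nu_j} \le c_1 Z_\nu$, and the intersection estimate from the proof of Proposition~\ref{p:lift} applies verbatim, yielding first $e^k = c^2$ with $c = c(G,\nu)$ and then $G^* Z_\nu = c Z_\nu$. In other words, $\nu$ is totally invariant under $G_\bullet$.

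To derive a contradiction I would invoke the toric description of divisorial valuations on a Hirzebruch–Inoue cusp: those whose center in the minimal resolution lies on the exceptional cycle are in bijection with primitive vectors $v \in \cO_K \cap \R_{>0}^2$ modulo the $\lambda$-action, and in this parameterization $F_\bullet$ is induced by multiplication by $\mu$, which in the coordinates $(\sigma_1,\sigma_2)$ is the diagonal matrix $\mathrm{diag}(\mu,\mu')$. Writing $\nu = \nu_v$, the total invariance of $\nu$ under $G_\bullet = F_\bullet^k$ becomes $\mu^k v = \lambda^m v$ for some $m \in \Z$; since $v$ has non-zero $\sigma_1$- and $\sigma_2$-coordinates, this forces $\mu^k = \lambda^m$ in $K$. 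Taking norms yields $N(\mu)^k = N(\lambda)^m = 1$, contradicting $N(\mu) = 2$.

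The main obstacle is extending the toric description to all divisorial valuations in $S$, in particular to those obtained by blowing up a smooth point on a cycle component rather than a node: such valuations are not \emph{a priori} captured by primitive lattice vectors. I would handle them via the Jacobian formula: since $JF$ is empty (the Jacobian determinant of $F$ equals the non-zero constant $N(\mu)$), one has $A(F_\bullet\nu) = A(\nu)/c(F,\nu)$ with $c(F,\cdot) \ge 1$, and along any $F_\bullet$-periodic orbit this forces $c(F,\cdot) \equiv 1$. Combined with the explicit multiplicative form of $F$ on the tube domain, this rigid condition should rule out periodic non-toric divisorial valuations and reduce the problem to the toric case handled above.
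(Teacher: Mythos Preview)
Your construction and the intersection-theoretic reduction (steps 1--2) are essentially sound, but step~3 contains a decisive error that makes the chosen example fail. With $\mu = 2+\sqrt 2$ one computes
\[
\frac{\mu}{\mu'} \;=\; \frac{2+\sqrt 2}{2-\sqrt 2} \;=\; \frac{(2+\sqrt 2)^2}{2} \;=\; 3+2\sqrt 2 \;=\; \lambda,
\]
so the induced action of $F_\bullet$ on the dual graph $\Gamma$ (a circle, parameterized as in the paper) is the rotation by the \emph{rational} angle $\tfrac{\log(\mu/\mu')}{2\log\lambda}=\tfrac12$. In particular $\mu^2 = 2\lambda$, and the linear map $\phi_\mu^2$ equals $2\phi_\lambda$ on $N_\R$, which preserves the toric fan up to a scalar; a suitable $\phi_\mu$-invariant refinement of the fan then yields a non-trivial modification of $(X,0)$ on which $F$ is holomorphic. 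This is exactly the rational-angle situation the paper flags before presenting its own example.

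The flaw is in the passage from total invariance to $\mu^k = \lambda^m$. A divisorial valuation on $\Gamma$ is a \emph{ray} $\R_+\cdot\sigma(v)$ modulo the $\lambda$-action, so the equality $G_\bullet\nu_v = \nu_v$ only says $\sigma(\mu^k v) \in \R_+\cdot\sigma(\lambda^m v)$, i.e.\ $\mu^k = t\,\lambda^m$ for some $t\in\Q_{>0}$; taking norms gives $N(\mu)^k = t^2$, not $1$. For your $\mu$ this is witnessed by $\mu^2 = 2\lambda$ with $t=2$. The genuine obstruction is therefore not $N(\mu)>1$ but the irrationality of $\log(\mu/\mu')/\log\lambda$: indeed multiplication by any rational integer $n\ge 2$ has $N(n)=n^2$ yet angle $0$, and visibly lifts to the minimal resolution. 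The paper secures irrationality by taking $\alpha = 3+\sqrt 2$, for which $\alpha/\alpha' = (11+6\sqrt 2)/7$ is not an algebraic integer, so no power of it lies in $\lambda^{\Z}$; then \emph{no} divisorial valuation on $\Gamma$ is $F_\bullet$-periodic, and this is what blocks every non-trivial holomorphic lift (details deferred to~\cite{monomial}). Your proposed fix via the thinness does not help here either: on a cusp $A\equiv 0$ along the entire circle $\Gamma$, so the Jacobian formula gives no constraint on valuations in $\Gamma$, which is precisely where your argument already breaks.
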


The rest of this section is devoted to the proof of this proposition. We rely on a description of cusp singularities in arithmetical terms (see~\cite{oda}).

Let $d\ge2$ be square-free integer, and pick $N$ a rank $2$ free $\Z$-module of the quadratic field $\Q(\sqrt{d})$. One may assume $N = \Z + \Z \om$ for some irrational  $\om\in\Q(\sqrt{d})$. Denote by $\al'$ the conjugate of any $\al\in \Q(\sqrt{d})$.
The map $\sigma(\al) = (\al, \al')$ embeds $N$ as a lattice in $\R^2$.
An element $\al \in \Q(\sqrt{d})$ is said to be totally positive if $\al$ and  $\al'$ are both positive. The group of invertible elements $\e$ in the ring of algebraic integers of $\Q(\sqrt{d})$
such that $\e N = N$ is denoted by $U_N$, and its subset of totally positive elements  by $U^+_N$. By Dirichlet's theorem, one has $U^+_N\simeq \Z$, and $[U_N:U^+_N] = 1$ or $2$ depending on whether $-1$ belongs to $U_N$ or not.

The quotient of $\C^2$ by the action of $N$ given by $\C^2 \ni (z,z') \mapsto (z +n, z'+n')$ is a complex torus $T_N$. If $M$ denotes the set of all $\R$-linear maps $m$ on $\R^2$ such that $m(N) \subset \Z$, the functions $e(m) (z,z') \= \exp( 2i\pi \times m(z,z'))$ identify $T_N$ canonically with the toric variety defined by the trivial fan $(0)$ in $N$. Write $\pi: \C^2 \to T_N$ for the natural quotient map.

Fix a generator $\e$ of $U^+_N$. The map $\phi_\e(z,z') = (\e z, \e z')$ acts on $\C^2$ by preserving the product of the upper half planes $\H\times\H$. This action descends to $H_N \= \pi ( \H^2)\subset T_N$.
It is a fact that this action is properly discontinuous, that $X_N \= H_N/\langle \phi_\e \rangle$ is an open smooth surface to which one can add $[U_N:U^+_N]$ points to obtain a compact surface with cusp singularities. In order to explain the construction of the required endomorphism, we need to explain some aspects of the proof of this fact. 

Denote by $N_+$ the subset of all totally positive elements in $N$. We identify it by $\sigma$ with $N \cap \R_+^2$. Denote by $\Theta$ the convex hull of $N_+$. Its boundary consists of a broken line
which is the union of segments joining extremal points of $\Theta$ that we denote by 
$n_k$, $k \in \Z$. We do the same construction with $N\cap(\R_+ \times \R_-)$, and obtain another convex set $\Theta_*$ whose extremal points will be denoted by $n_s^*$, $s\in \Z$.
We now introduce the non-singular fan $\Delta$ which consists of the two infinite sets of rays $\R_+ n_k$, $\R_+ n_s^*$ for all $k,s$, and of the cones $\R_+ n_k + \R_+ n_{k+1}$, $\R_+ n_s^* + \R_+ n_{s+1}^*$. The toric variety $T_N(\Delta)$ one obtains in this way contains two infinite chains of rational curves (corresponding respectively to rays $\R_+ n_k$ and $\R_+ n_s^*$). 

The map $\phi_\e$ acts naturally on $N$ preserving $\Theta$ and $\Theta_*$. It induces a "translation" on the set of rays such that $\phi_\e (n_k) = n_{k+l}$, and $\phi_\e (n_s^*) = n_{s+t}$ for some $l,t \in \N^*$ and all $k,s \in \Z$. Using this, one can show that the action of $\phi_\e$ on $T_N(\Delta)$ is properly discontinuous and cocompact. The quotient space $\bar{X}_{N}$ is a smooth compact complex surface which contains $X_N$ as a dense open subset. Its complement is 
the quotient by $\phi_\e$ of the two chains of rational curves associated to the rays
 $\R_+ n_k$ and $\R_+ n_s^*$. It thus produces two cycles of rational curves on $\bar{X}_N$.
These cycles can be contracted yielding  a new singular surface
 $\bar{X}^*_N$ with two cusp singularities $p$ and $p_*$.
 
 We denote by $\Gamma$ and $\Gamma^*$ the  dual graphs of the minimal desingularizations of $p$ and $p_*$ 
respectively. The graph $\Gamma$ is naturally  the set of rays $\R_+^*\cdot (v,v') \in \R_+^2$  modulo the action of $\phi_\e$ given by $\phi_\e(\R_+^* \cdot (v,v')) = \R_+^* \cdot(\e v, \e v')$. One may thus identify $\R/\Z \simeq \Gamma$ by  mapping $t\in \R$ to $\R_+^* \cdot (1,\e ^{2t})$.
 The same description is valid for $\Gamma^*$.

Let us now fix a totally positive element $\a \in \Q(\sqrt{d})$ such that $\a N \subset N$. Then the map $\phi_\a(z, z') \= (\a z, \a' z')$ on $\C^2$ preserves $\H^2$ and  commutes with $\phi_\e$.
It thus induces a holomorphic map $F_\a$ on $X_N$, which extends to $\bar{X}^*_N$ by normality.
The  topological degree of $F_\a$ is the norm of $\a$, that is $N(\a)\= \a \a'$.

The map $\phi_\a$ also acts on $\R_+^2$ hence on the set of rays in the upper half quadrant. 
The induced action on the quotient of the set of rays modulo $\phi_\e$ gives us precisely the induced action of $F_\a$ on the dual graph $\Gamma$ through the identification above.
Concretely in the parameterization $\R/\Z \ni t \mapsto \R_+^* \cdot (1,\e ^{2t})$, we get 
$$F_\a (t) = t + \frac{\log (\a/\a')}{2 \log \e}~.$$  
The action is thus a rotation of angle $\log (\a/\a')/(2 \log \e)$.
 
 When this angle is a rational number (this is the case if $\a$ is chosen to be an integer), then $F_\a$
 lifts to a holomorphic map of some model of $\bar{X}^*_N$ with at most quotient singularities.
 Otherwise, $F_\a$ can not be lifted as a holomorphic map in any model dominating $\bar{X}^*_N$. A detailed 
argument of this fact  is given in a similar context in~\cite{monomial}.

 To conclude let us exhibit an example where the rotation is irrational. Take $d=2$, and $N$ to be the
ring of algebraic integers of $\Q(\sqrt{2})$, that is
$\fD = \Z + \sqrt{2} \Z$. A totally positive fundamental unit is given by $\e = 3 + 2 \sqrt{2}$.
Pick $\a  = 3 + \sqrt{2}$ which is also totally positive. Then $\a/\a' = \frac17 (11 + 6\sqrt{2})$.
This number is not integral, hence none of its powers are integral.
None of them are hence a power of $\e$, and $\log (\a/\a')/\log \e$ is irrational as required.

\section{Numerical characterization of holomorphic maps}\label{s:numeric}

In this section, we prove Theorem~A. The proof we give is based on the analysis of the action of $F$ on the N\'eron-Severi space of all birational models of $\mathbb{P}^2$. We thus need the analogous global construction of Weil and Cartier divisors that we described  in a local setting in Section~\ref{s:singular}.
We only sketch here the theory referring to~\cite{deggrowth} for details.

%
%
\subsection{Global Weil and Cartier classes}\label{s:WC}
A model of $\mathbb{P}^2$ is an equivalence class of bimeromorphic map $\pi: X_\pi \dashrightarrow \mathbb{P}^2$ where $X_\pi$ is a normal surface. The set of all models forms an inductive set. One defines $\fP \= \varprojlim_\pi X_\pi$, and endow this space with the product topology, each $X_\pi$ viewed as a scheme with its Zariski topology. For each $X_\pi$ smooth, we let $\NS(X_\pi)$ be its N\'eron-Severi space. For $X_\pi$ singular, $\NS(X_\pi)$ is defined to be the orthogonal complement in $\NS(X)$
with respect to the intersection form of the space generated by classes associated to exceptional divisors of some desingularization of $X \to X_\pi$. For any bimeromorphic regular map $X_{\pi'} \to X_\pi$, we have an orthogonal decomposition:
\begin{equation}\label{e:small-decomp}
\NS(X_{\pi'}) = \NS(X_\pi) \oplus \Z [E_i]
\end{equation}
where the sum is taken over all irreducible prime divisors $E_i$ contracted by $\pi^{-1} \circ \pi'$.
A Weil class is an element $Z\in \varprojlim_\pi \NS(X_\pi)$.  Equivalently, it is a collection $Z = \{ Z_{X_\pi} \}$ with $Z_{X_\pi} \in \NS(X_\pi)$ with the compatibility condition $ (\pi^{-1} \circ \pi')_* Z_{X_\pi'} = Z_\pi$ whenever $\pi^{-1} \circ \pi': X_{\pi'} \to X_{\pi}$ is regular. The class $Z_{X_\pi}$ is called the \emph{incarnation}  of $Z$ in $X_\pi$.
A Cartier class is an element $Z\in \varinjlim_\pi \NS(X_\pi)$. As in the local case, we may speak of a model determining a Cartier class.
The infinite dimensional space of Weil (resp. Cartier) classes is denoted by $\NS(\fP)$ (resp. by $\CNS(\fP)$).

The intersection pairing on each $\NS(X_\pi)$ induces a pairing $\CNS(\fP) \times \NS(\fP) \to \R$ whose restriction to $\CNS(\fP)$ is a symmetric bilinear form of Minkowski's type.  One can define the completion of $\CNS(\fP)$ with respect to this intersection form. This yields a space $\Ltwo(\fP)$ on which the intersection form is well-defined, and which is complete for  the Hilbert norm: $\| Z \|^2 \= - Z^2 + 2\, (Z\cdot \cL)^2$. Here $\cL$ is the Cartier class determined in $\mathbb{P}^2$ by $\cO(1)$.

For any fixed model $X \dashrightarrow \mathbb{P}^2$, we introduce  the set $\mV_X$ of all divisorial valuations on $\mathbb{C}(X)$ (up to equivalence) whose center in $X$ is a point. Each  $\nu \in \mV_X$  determines a Cartier divisor $E_\nu \in \CNS(\fP)$ as follows. Pick a  desingularization  $\mu: X' \to X$ such that $\nu$ has one dimensional center in $X'$, and $\pi' : X' \to \mathbb{P}^2$ is regular.
Contract all $\mu$-exceptional curves except for the center of $\nu$. You obtain in this way an intermediate model $X' \to X_0 \to X$. Then define $E_\nu$ to be the Cartier class determined in $X_0$ by the center of $\nu$.
Then we have the following orthogonal decompositions:
\begin{eqnarray}
\CNS(\fP) & = & \NS(X) \oplus_{\nu \in \mV_X} \R E_\nu  \label{e:ortho1} \\
\NS(\fP) & = & \NS(X) \oplus \mV_X^{\R} \label{e:ortho2}\\
\Ltwo (\fP) & = & \NS(X) \oplus \Ltwo(\mV_X)~, \label{e:ortho3}
\end{eqnarray}
where $\Ltwo(\mV_X)$ is the set of real-valued functions $f: \mV_X\to \R$ such that
$\sum_{\mV_X} f^2(\nu)$ is summable.

A Weil class $Z$ is pseudo-effective if its incarnations $Z_X\in\NS(X)$ are pseudo-effective for all models $X$. A Weil class is nef if $Z_X$ is nef for all $X$. This implies $Z\in\Ltwo(\fP)$ and $Z^2 \ge 0$.

\medskip

Take $F: \mathbb{P}^2 \dashrightarrow \mathbb{P}^2$ a dominant rational map of topological degree $e$. Then we have induced actions $F_*, F^*$ on $\CNS(\fP), \NS(\fP)$ and $\Ltwo(\fP)$ such that
\begin{eqnarray}
F_*F^* Z & = & e \times Z ~, \label{e:pp2}\\
F^* Z \cdot Z' & = & Z \cdot F_* Z'~, \label{e:trans}
\end{eqnarray}
for all $Z,Z' \in \Ltwo(\fP)$.
If $X$ is an arbitrary model, we denote by $F_\#,F^\#: \NS(X) \to \NS(X)$, the composition maps $F_\# \= \mathrm{p}_X\circ F_*$, $F^\# \= \mathrm{p}_X \circ F^*$ where $\mathrm{p}_X : \NS(\fP) \to \NS(X)$ is the orthogonal projection given by~\eqref{e:ortho2}. An alternative description of these maps can be given as follows. Let $\hX$ be the graph of $F$ in $X\times X$, and denote by $\pi_1, \pi_2$ the projection onto the first and second factor respectively so that $F = \pi_2 \circ \pi_1^{-1}$. Then $F_\# \= \pi_{2*} \circ \pi_1^*$ and $F^\# \= \pi_{1*} \circ \pi_2^*$.
For these linear operators,~\eqref{e:pp2} is no longer true but one still has:
\begin{eqnarray}
F^\# Z \cdot Z' & =&  Z \cdot F_\# Z'~,
\end{eqnarray}
for all $Z,Z'\in\NS(X)$.

The degree of $F$ is defined as the degree of the preimage of a generic line, and is equal to $\deg(F) = F^* \cL \cdot \cL$. The sequence $\deg(F^n)$ is sub-multiplicative, and one sets $\la \= \lim_n \deg(F^n)^{1/n}$.
In~\cite{deggrowth}, the following theorem is proved.
\begin{thm}\label{t:funda}
Suppose $e< \la^2$. Then there exist two nef classes $\theta^*, \theta_*$ such that 
$F^* \theta^* = \la \theta^*$, $F_* \theta_* = \la \theta_*$,  and
for all classes $\a \in \Ltwo(\fX)$:
$$
\la^{-n} F^{n*} \al \to (\al\cdot \theta_*) \theta^* ; \,
\la^{-n} F^{n}_* \al \to (\al\cdot \theta^*) \theta_*  
~.$$
\end{thm}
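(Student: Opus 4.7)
The plan is to run a Perron--Frobenius style argument inside the Hilbert space $\Ltwo(\fP)$, then exploit the Lorentzian structure together with the intertwining relation $F_*F^* = e\cdot\mathrm{id}$ to diagonalize $F^*$ into two isotropic lines plus a uniformly contracted subspace.

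\emph{Step 1: construction of the eigenclasses.} Set $\al_n \= \la^{-n} F^{n*}\cL$. Each $\al_n$ is nef, and from~\eqref{e:pp2} together with $\cL^2=1$ one gets $\al_n^2 = (e/\la^2)^n \to 0$, while $\al_n\cdot\cL = \la^{-n}\deg(F^n)$ lies in a bounded interval $[1,C]$ by submultiplicativity of the degree. Hence $\|\al_n\|^2 = -\al_n^2 + 2(\al_n\cdot\cL)^2$ is bounded in $\Ltwo(\fP)$. Pass to the Cesaro averages $\beta_N = N^{-1}\sum_{n=0}^{N-1}\al_n$, which stay nef and bounded, and extract a weak limit $\theta^*$. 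Then $\theta^*$ is nef (weak limits of nef classes are nef), and $\theta^*\cdot\cL\ge 1$ so $\theta^*\neq 0$. The identity $F^*\beta_N - \la\beta_N = \la N^{-1}(\al_N-\al_0)$ together with the (weak) continuity of $F^*$ on $\Ltwo(\fP)$ proved in~\cite{deggrowth} yields $F^*\theta^*=\la\theta^*$ in the limit. Weak lower semicontinuity of $-Z^2$ and the bound $(\theta^*)^2\ge 0$ (nef) force $(\theta^*)^2=0$. The symmetric construction with $F_*$ produces an analogous $\theta_*\in\Ltwo(\fP)$ that is nef, isotropic, and satisfies $F_*\theta_*=\la\theta_*$.

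\emph{Step 2: positivity of the cross pairing.} From~\eqref{e:trans} the pairing $\theta^*\cdot\theta_*$ is $F^*/\la$-invariant. If it vanished, the Hodge-index theorem in $\Ltwo(\fP)$ applied to two nef isotropic vectors with zero mutual pairing would force $\theta_*=c\theta^*$ for some $c>0$; combining $F^*\theta^*=\la\theta^*$, $F_*\theta_*=\la\theta_*$ and $F_*F^*=e\cdot\mathrm{id}$ would then yield $e=\la^2$, contradicting the standing hypothesis. Hence $\theta^*\cdot\theta_*>0$ and we normalize $\theta^*\cdot\theta_*=1$.

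\emph{Step 3: diagonalizing $F^*$.} I claim that $\theta_*$ is also an eigenvector of $F^*$ with eigenvalue $e/\la$. Indeed, $F^*\theta_*\cdot\theta_* = \theta_*\cdot F_*\theta_* = 0$ and $F^*\theta_*\cdot\theta^* = \theta_*\cdot F_*\theta^* = (e/\la)$ (the last identity coming from $F_*\theta^*=(e/\la)\theta^*$, itself forced by $F_*F^*\theta^* = e\theta^* = \la F_*\theta^*$). Writing $F^*\theta_* = a\theta^* + b\theta_* + \gamma$ in the decomposition
\begin{equation*}
\Ltwo(\fP) = \R\theta^* \oplus \R\theta_* \oplus V_0,\qquad V_0 \= (\theta^*)^\perp\cap(\theta_*)^\perp,
\end{equation*}
one reads off $a=0$ and $b=e/\la$. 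Since $(F^*\theta_*)^2 = e\theta_*^2 = 0$ reduces to $\gamma^2=0$, and the intersection form is negative definite on $V_0$, we get $\gamma=0$, proving the claim. The same type of computation shows that the decomposition above is $F^*$-invariant.

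\emph{Step 4: convergence.} On $V_0$ the intersection form is negative definite, and the identity $(F^*\gamma)^2 = e\gamma^2$ from~\eqref{e:pp2}-\eqref{e:trans} shows $F^*|_{V_0}$ has operator norm exactly $\sqrt e$ with respect to the norm $\sqrt{-\gamma^2}$, hence (by equivalence of norms on $V_0$ coming from the Hilbert norm of $\Ltwo(\fP)$) with respect to $\|\cdot\|$ up to a constant. Since $\sqrt e < \la$, for any $\al = c\theta^* + d\theta_* + \gamma$ with $\gamma\in V_0$ we find
\begin{equation*}
\la^{-n} F^{n*}\al = c\,\theta^* + d\bigl(e/\la^2\bigr)^n\theta_* + \la^{-n}F^{n*}\gamma \longrightarrow c\,\theta^*.
\end{equation*}
The coefficient is computed by $c = \al\cdot\theta_*$ using the chosen normalization, giving $\la^{-n}F^{n*}\al\to(\al\cdot\theta_*)\theta^*$. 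The symmetric argument, swapping the roles of $F^*$ and $F_*$ and of $\theta^*$ and $\theta_*$, yields the second convergence.

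The main obstacle is the spectral analysis on $V_0$: one must ensure that $F^*$ is genuinely bounded on $\Ltwo(\fP)$ so that the $\sqrt e$-contraction on $V_0$ translates into norm convergence (and not just a weak statement). This step relies on the continuity and boundedness results for $F^*$ on $\Ltwo(\fP)$ established in~\cite{deggrowth}; once they are in hand, everything else follows from the soft Perron--Frobenius/Hodge-index argument above.
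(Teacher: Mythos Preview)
The paper does not prove this theorem; it is quoted from~\cite{deggrowth} (see the sentence immediately preceding the statement). Your outline is essentially the argument carried out in that reference: construct $\theta^*,\theta_*$ as weak limits of normalized iterated pull-backs/push-forwards in the Lorentzian Hilbert space $\Ltwo(\fP)$, use Hodge index to separate them when $e<\la^2$, and exploit the isometry-up-to-$e$ relation $F^*Z\cdot F^*W = e\,(Z\cdot W)$ to obtain the $\sqrt e$-scaling on the negative-definite complement $V_0$.

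One genuine slip in Step~1: you claim $\la^{-n}\deg(F^n)\le C$ ``by submultiplicativity of the degree''. Submultiplicativity of $n\mapsto\deg(F^n)$ together with Fekete's lemma gives $\la=\inf_n\deg(F^n)^{1/n}$, hence the \emph{lower} bound $\la^{-n}\deg(F^n)\ge 1$, but not an upper bound. The standard remedy is to normalize by $\deg(F^n)$ rather than by $\la^n$ when extracting the weak limit (so that $\tilde\al_n\cdot\cL=1$ is automatic and $\tilde\al_n^2=e^n/\deg(F^n)^2\le (e/\la^2)^n\to 0$), and then identify the eigenvalue a posteriori; alternatively, the estimate $\deg(F^n)=O(\la^n)$ is itself one of the outputs of the spectral analysis in~\cite{deggrowth} rather than an input. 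Apart from this, Steps~2--4 are correct and match the reference; in particular your equivalence-of-norms concern on $V_0$ is handled by writing $\cL=p\,\theta^*+q\,\theta_*+\cL_0$ with $\cL_0\in V_0$ and applying Cauchy--Schwarz for the negative-definite form, which gives $(\gamma\cdot\cL)^2\le(-\cL_0^2)(-\gamma^2)$.
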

Using the invariance properties of $\theta^*$ and $\theta_*$ it is not difficult to show that $(\theta^*)^2 =0$  and $\theta^* \cdot \theta_* >0$.

We also gather here two computations of general interest.
\begin{lem}
Suppose $\theta_*, \theta^*$ are Cartier classes determined in a smooth model $X$.
Denote by $K_X$ a canonical divisor of $X$ and by $JF$ the critical divisor of $F$.
Then one has:
\begin{eqnarray}
 (e- \la) \, K_X \cdot \theta^* & = &  - \la \, (\theta^* \cdot JF)~;  \label{e:crit+}
\\
(\la -1) \, K_X \cdot \theta_* & = & -\theta_* \cdot JF~. \label{e-crit-}
\end{eqnarray}
\end{lem}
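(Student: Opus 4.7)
The plan is to lift everything to a smooth model $\hat X$ resolving $F$, with projections $\pi_1,\pi_2 : \hat X \to X$ such that $\pi_1$ is birational and $F = \pi_2 \circ \pi_1^{-1}$. On $\hat X$ there are two canonical divisor decompositions
\[
K_{\hat X} = \pi_1^* K_X + E_1 = \pi_2^* K_X + R(\pi_2),
\]
where $E_1$ is the effective $\pi_1$-exceptional divisor and $R(\pi_2)$ is the ramification divisor of $\pi_2$. The two basic facts I will use are $\pi_{1*}E_1 = 0$ and $\pi_{1*} R(\pi_2) = JF$ (the latter being the very definition of the critical divisor of the rational map $F$). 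The rest is the projection formula plus the identity that $\pi_2$ is generically $e$-to-one.

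For the first identity, I would start by observing that since $\theta^*$ is Cartier determined in $X$ and $F^*\theta^* = \la \theta^*$ in $\Ltwo(\fP)$, both sides are Cartier classes and hence the equality is already an equality of Cartier classes. Its incarnation in $\hat X$ reads
\[
\pi_2^* \theta^* = \la\, \pi_1^*\theta^* \quad \text{in } \NS(\hat X).
\]
Intersecting both sides with $K_{\hat X}$, using the decomposition $K_{\hat X} = \pi_2^*K_X+R(\pi_2)$ on the left and $K_{\hat X}=\pi_1^*K_X+E_1$ on the right, the left-hand side becomes $e\,(K_X\cdot \theta^*) + \pi_2^*\theta^* \cdot R(\pi_2)$ and the right-hand side simplifies to $\la\,(K_X\cdot \theta^*)$ (the term with $E_1$ vanishes by projection formula). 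To handle the remaining term $\pi_2^*\theta^*\cdot R(\pi_2)$, I substitute again $\pi_2^*\theta^* = \la\,\pi_1^*\theta^*$ and push forward by $\pi_1$, obtaining $\la\,(\theta^*\cdot JF)$. Collecting terms gives
\[
(e-\la)\,K_X\cdot \theta^* = -\la\,(\theta^*\cdot JF).
\]

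For the second identity, the input is $F_*\theta_* = \la\,\theta_*$, which by the definition $F_* = \pi_{2*}\pi_1^*$ reads $\pi_{2*}\pi_1^*\theta_* = \la\,\theta_*$ in $\NS(X)$. Intersecting with $K_X$ on both sides and using the projection formula,
\[
\pi_1^*\theta_* \cdot \pi_2^*K_X = \la\,(K_X\cdot \theta_*).
\]
Write $\pi_2^*K_X = K_{\hat X} - R(\pi_2)$ and expand: $\pi_1^*\theta_*\cdot K_{\hat X} = K_X\cdot\theta_*$ (again by $\pi_{1*}E_1=0$), while $\pi_1^*\theta_*\cdot R(\pi_2) = \theta_*\cdot \pi_{1*}R(\pi_2)=\theta_*\cdot JF$. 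This yields $K_X\cdot\theta_* - \theta_*\cdot JF = \la\,K_X\cdot\theta_*$, hence $(\la-1)\,K_X\cdot \theta_* = -\,\theta_*\cdot JF$.

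There is no real obstacle once the two decompositions of $K_{\hat X}$ and the identity $\pi_{1*}R(\pi_2)=JF$ are in place; the whole computation is pure intersection calculus on the graph. The only point requiring a brief justification is the passage from the $\Ltwo$-identity $F^*\theta^* = \la\theta^*$ to the model-by-model identity $\pi_2^*\theta^* = \la\pi_1^*\theta^*$, which is legitimate because $\CNS(\fP)$ injects into $\Ltwo(\fP)$ and $\theta^*$ is assumed Cartier.
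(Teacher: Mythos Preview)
Your proof is correct and follows essentially the same approach as the paper: resolve $F$ via a graph $\hat X \to X\times X$ with projections $\pi_1$ (birational) and $\pi_2$ (holomorphic), use the two decompositions of $K_{\hat X}$ coming from the ramification formula for $\pi_2$ and the blow-up formula for $\pi_1$, together with $\pi_{1*}R(\pi_2)=JF$, and then intersect with the invariance relations for $\theta^*$ and $\theta_*$. The paper sketches this in one sentence; you have simply written out the computation in full.
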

\begin{proof}
Pick a sequence of point blow-ups $\pi: Y \to X$ such that $F = G \circ \pi^{-1}$ with $G: Y \to X$ holomorphic. The proof is then a consequence of the invariance properties of $\theta_*$ and $\theta^*$, of the Jacobian formula $G^* K_X = K_Y - JG$,  and of the relation $\pi_* (JG) = JF$.
\end{proof}

%
%

\subsection{Holomorphic model}

The following proposition gives one implication in Theorem~A.
\begin{prop}\label{p:cartiert*}
Assume $e< \la^2$. Suppose there exists a model $X$ in which $F$ induces a holomorphic map.
Then both classes $\theta_*$ and $\theta^*$ are Cartier and determined in $X$.
\end{prop}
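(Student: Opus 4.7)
The plan is to exploit the fact that holomorphy of $F$ on $X$ forces both operators $F^*$ and $F_*$ on $\Ltwo(\fP)$ to preserve the finite-dimensional subspace $\NS(X)\subset\Ltwo(\fP)$, and then to extract $\theta^*$ and $\theta_*$ from this subspace by passing to the limit in Theorem~\ref{t:funda}. The starting point is the observation that when $F:X\to X$ is a morphism, the Cartier/Weil operators $F^*$ and $F_*$ restrict on $\NS(X)$ to the ordinary morphism pullback and pushforward, and therefore send $\NS(X)$ into itself. Being finite-dimensional, $\NS(X)$ is closed in $\Ltwo(\fP)$.

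Before applying the limit formulas, I need to know that the incarnations $(\theta^*)_X$ and $(\theta_*)_X\in\NS(X)$ -- i.e.\ the $\NS(X)$-components in the orthogonal decomposition~\eqref{e:ortho3} -- are both non-zero. Suppose $(\theta^*)_X=0$. Then $\theta^*\in\Ltwo(\mV_X)$, so for every smooth model $X'$ dominating $X$ the incarnation $(\theta^*)_{X'}$ is supported on the divisors contracted by $X'\to X$. On this subspace of $\NS(X')$ the intersection form is negative definite, so the nefness inequality $(\theta^*)_{X'}^2\geq 0$ forces $(\theta^*)_{X'}=0$ for every such $X'$. Since models dominating $X$ are cofinal, $\theta^*=0$ in $\Ltwo(\fP)$, contradicting $\theta^*\cdot\theta_*>0$. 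The identical argument handles $(\theta_*)_X$.

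With this in hand, pick any ample class $\alpha\in\NS(X)$. Since $(\theta_*)_X\in\NS(X)$ is nef and non-zero on the projective surface $X$, Kleiman's criterion gives $\alpha\cdot\theta_*=\alpha\cdot(\theta_*)_X>0$. By Theorem~\ref{t:funda},
\[
\lambda^{-n}F^{n*}\alpha \;\longrightarrow\; (\alpha\cdot\theta_*)\,\theta^*\qquad\text{in }\Ltwo(\fP).
\]
The left-hand side lies in $\NS(X)$ for every $n$ by the first paragraph, and $\NS(X)$ is closed in $\Ltwo(\fP)$, hence the limit belongs to $\NS(X)$. Dividing by the positive scalar $\alpha\cdot\theta_*$ yields $\theta^*\in\NS(X)$, i.e.\ $\theta^*$ is a Cartier class determined in $X$. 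The symmetric argument applied to $\lambda^{-n}F_*^n\alpha\to(\alpha\cdot\theta^*)\,\theta_*$, using $\alpha\cdot\theta^*=\alpha\cdot(\theta^*)_X>0$, then gives $\theta_*\in\NS(X)$.

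The main obstacle is the non-vanishing step in the second paragraph; once the incarnations $(\theta^*)_X$ and $(\theta_*)_X$ are known to be non-zero, closedness of $\NS(X)$ in $\Ltwo(\fP)$ together with the limit formulas of Theorem~\ref{t:funda} does the rest essentially for free.
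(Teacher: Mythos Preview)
Your overall strategy coincides with the paper's: show that both $F^*$ and $F_*$ preserve the finite-dimensional (hence closed) subspace $\NS(X)\subset\Ltwo(\fP)$, then pass to the limit in Theorem~\ref{t:funda} against a class with positive intersection with $\theta^*$ and $\theta_*$. Your non-vanishing argument for $(\theta^*)_X$ and $(\theta_*)_X$ is in fact more explicit than what the paper writes; the paper simply picks $\theta\in\NS(X)$ with $\theta^2>0$ and invokes Hodge, which amounts to the same thing.

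There is, however, a genuine gap in what you call the ``starting observation''. It is indeed immediate that $F^*$ preserves $\NS(X)$ when $F$ is holomorphic on $X$: the pull-back of a Cartier class determined in $X$ is by definition determined in $X$. But the analogous statement for $F_*$ is \emph{not} automatic. If $F$ contracted an irreducible curve $C\subset X$ to a point $p$, then the global push-forward $F_*[C]\in\Ltwo(\fP)$ would have incarnation $0$ in $X$ yet a non-trivial incarnation in the blow-up of $X$ at $p$, so $F_*[C]\notin\NS(X)$. Your sentence ``$F_*$ restricts on $\NS(X)$ to the ordinary morphism pushforward, and therefore sends $\NS(X)$ into itself'' is circular: it presupposes exactly what must be checked. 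The paper fills this in by observing that $\sqrt{e}^{\,-1}F^{\#}$ preserves the intersection form on $\NS(X)$ (from $F^*Z\cdot F^*W=e\,Z\cdot W$), hence is bijective on the finite-dimensional space $\NS(X)$; by duality $F_{\#}$ is injective, so no curve is contracted, and then $F_*$ does preserve $\NS(X)$ (equivalently, one can read off $F_*|_{\NS(X)}=e\,(F^*|_{\NS(X)})^{-1}$ directly from $F_*F^*=e\cdot\mathrm{id}$). This short argument is precisely what is missing from your first paragraph, and contrary to your last remark it, not the non-vanishing step, is the actual crux.
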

\begin{rmk}
One cannot drop the assumption $e < \la^2$. Take $F(x,y) = (y^2, x^3)$ so that $e =6$, $\la = \sqrt{6}$.
There exists a class $\theta_*$ such that $F_* \theta_* = \la \theta_*$.
However this class is not Cartier, as
it is  associated to the \emph{irrational} monomial
valuation $\nu(x) =-1$, $\nu(y) = -\sqrt{2/3}$ in the sense
$\theta_* \cdot Z(P) =  - \nu(P)$ for any polynomial $P \in \C[x,y]$. Here  $Z(P)$ is the Weil divisor
whose incarnation in a model $\pi: X \to \mathbb{P}^2$ is given by
the part of the divisor of the meromorphic function induced by $P$ on $X$ which lies above the line at infinity.
\end{rmk}
\begin{proof}
We claim that $F$ cannot contract any curve in $X$.
Indeed $\sqrt{e}^{-1} F^\#$ is a linear map on $\NS(X)$ which preserves the intersection form since $F$ is holomorphic. Therefore, it is bijective. By duality, $F_\#$ is also injective hence $F_\# C \neq 0$ for any irreducible curve $C$. This proves our claim.

The linear map $F^*$ preserves $\NS(X)$ as the pull-back of a Cartier class determined in $X$ is by definition determined in $X$ since $F$ is holomorphic. 
The map $F_*$ also preserves $\NS(X)$ since $F$ does not contract curves, see~\cite[Corollary~2.6]{deggrowth}.

Now pick any $\theta \in \NS(X)$ with $\theta^2 >0$ so that $\theta \cdot \theta^*, \theta\cdot \theta_* >0$. 
Then by Theorem~\ref{t:funda}, $\la^{-n} F^{n*}\theta \to (\theta\cdot\theta_*) \theta^*$ and 
$\la^{-n} F^{n}_*\theta \to (\theta\cdot\theta^*) \theta_*$. This shows $\theta^*,\theta_*\in\NS(X)$ and concludes the proof.
\end{proof}


\subsection{When are the classes $\theta^*, \theta_*$ Cartier?}
To conclude to the proof of Theorem~A, we need to prove that if $\theta^*$ and $\theta_*$ are Cartier then $F$ admits a holomorphic model or is a skew product. 
We first prove an intermediate result. 
\begin{thm}\label{t:contract}
Suppose $e< \la^2$ and $\theta^*, \theta_*$ are Cartier.
Then we are in one of the following two exclusive cases.
\begin{itemize}
 \item The set of all curves orthogonal to $\theta^*$ can be contracted, and $F$ induces a 
holomorphic map on the resulting normal surface.
\item There exists a model $X$ and a fibration $\varpi: X \to \mathbb{P}^1$ which is preserved by $F$ and 
such that the class of a fiber equals $\theta^*$.
\end{itemize}
\end{thm}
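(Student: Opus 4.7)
The plan is to fix a smooth model $X$ in which both Cartier classes $\theta^*$ and $\theta_*$ are determined, and to carry out a Hodge-index dichotomy on it. Set $\mathcal{C}=\{C\subset X\text{ irreducible} : C\cdot\theta^*=0\}$, and recall that $(\theta^*)^2=0$, that $\theta^*$ is nef, and that $\theta^*\cdot\theta_*>0$. Let $V\subset\NS(X)_{\R}$ denote the real span of $\{[C]:C\in\mathcal{C}\}$. By the Hodge index theorem, the intersection form on $(\theta^*)^\perp\subset\NS(X)_{\R}$ is negative semidefinite with one-dimensional kernel $\R\theta^*$, so the form on $V$ is negative semidefinite with kernel $V\cap\R\theta^*$. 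The two alternatives in the theorem will correspond to (A) $\theta^*\notin V$, where the form on $V$ is in fact negative definite, and (B) $\theta^*\in V$.

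In Case (B) the plan is to upgrade the $\R$-linear relation $\theta^*\in V$ to an honest pencil. A Zariski-lemma-type argument on each connected component of $\bigcup_{C\in\mathcal{C}}C$ should turn this into a positive rational combination, yielding an effective $\Q$-divisor $D$ with class proportional to $\theta^*$. Since $D$ is nef, effective, and satisfies $D^2=0$, some multiple of $D$ defines a base-point-free pencil and hence a fibration $\varpi:X\to\mathbb{P}^1$ with fiber class $\theta^*$; here one relies on the Sakai-type Halphen-pencil results invoked earlier in the paper, together with $X$ being rational. The invariance $F^*\theta^*=\lambda\theta^*$, interpreted in terms of linear systems, then forces the preimage of a generic fiber to consist of $\lambda$ fibers, so $F$ preserves $\varpi$ and covers a degree-$\lambda$ endomorphism of $\mathbb{P}^1$, giving the second alternative.

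In Case (A) the goal is to contract $\mathcal{C}$. The first (and main technical) step is to show that $\mathcal{C}$ is \emph{finite}: distinct curves in $\mathcal{C}$ have distinct classes (otherwise $(C_1-C_2)^2=0$ in a negative definite form forces $C_1=C_2$), and the self-intersections $C^2<0$ should be uniformly bounded using adjunction together with the dual control provided by $\theta_*$ being Cartier (curves orthogonal to both $\theta^*$ and $\theta_*$ lie in a strictly negative definite subspace coming from the signature-$(1,1)$ plane spanned by $\theta^*,\theta_*$). Granting finiteness, Mumford–Artin's contractibility theorem yields a bimeromorphic morphism $\sigma:X\to X_0$ onto a normal projective surface which contracts exactly the connected configurations formed by $\mathcal{C}$.

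It then remains to verify that $\widetilde F:=\sigma\circ F\circ\sigma^{-1}:X_0\dashrightarrow X_0$ is holomorphic. Take a resolution $\pi_1:Y\to X$ of the indeterminacy of $F$ with $\pi_2=F\circ\pi_1:Y\to X$ holomorphic; the Cartier identity $F^*\theta^*=\lambda\theta^*$ specializes to $\pi_2^*\theta^*=\lambda\pi_1^*\theta^*$ in $\NS(Y)$. For any $\pi_1$-exceptional prime $E$, the projection formula gives
\[\pi_{2*}E\cdot\theta^*=E\cdot\pi_2^*\theta^*=\lambda\,\pi_{1*}E\cdot\theta^*=0,\]
so $\pi_2(E)$ is either a point or a curve lying in $\mathcal{C}$. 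A connectedness argument on the (tree-like) fibers of $\pi_1$ then shows that $\sigma\circ\pi_2$ factors through $\pi_1$, producing a holomorphic $F_0=\sigma\circ F:X\to X_0$. Finally, for $C\in\mathcal{C}$ decomposing $F_*C=\sum m_iC_i'$ with each $C_i'$ irreducible and $m_i>0$, the chain $\sum m_i(C_i'\cdot\theta^*)=F_*C\cdot\theta^*=\lambda(C\cdot\theta^*)=0$ combined with the nefness of $\theta^*$ forces $C_i'\in\mathcal{C}$, so $F_0$ contracts $\mathcal{C}$ and factors through $\sigma$ as the sought holomorphic map $\widetilde F:X_0\to X_0$. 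The two steps I expect to be most delicate are the finiteness of $\mathcal{C}$ in Case (A) — which crucially uses \emph{both} $\theta^*$ and $\theta_*$ being Cartier — and the production of a genuine fibration from the purely cohomological relation in Case (B).
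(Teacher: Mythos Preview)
Your dichotomy (A)/(B) matches the paper's Cases~1/2, and your Case~(A) is essentially the paper's Case~1 argument. One correction there: finiteness of $\mathcal{C}$ in Case~(A) does not need $\theta_*$, and your adjunction sketch is beside the point. If $\mathcal{C}$ were infinite, a linear relation among the $[C_i]$ in the finite-dimensional space $\NS(X)_\R$ produces two effective divisors with disjoint support and equal class; that class is then nef, orthogonal to $\theta^*$, with non-negative square, hence proportional to $\theta^*$ by Hodge, forcing $\theta^*\in V$ --- so you are already in Case~(B). This is exactly the paper's Case~2a argument, and it shows that in Case~(A) the set $\mathcal{C}$ is automatically finite.

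The genuine gap is in Case~(B). Your claim that a nef effective $\Q$-divisor $D$ with $D^2=0$ on a rational surface has $h^0(nD)\ge2$ for some $n$ is \emph{false}: Sakai's generalized Halphen surfaces have $D=-K_X$ nef and effective with $h^0(-nK_X)=1$ for every $n\ge1$. Ruling this situation out is the heart of the proof (the paper's Cases~2b1 and~2b2) and is where the hypothesis $e<\la^2$ and the Cartier assumption on $\theta_*$ actually enter. The missing steps are: (i) $F^\sharp$ and $F_\sharp$ preserve the lattice $\NS(X)$ and fix the integral class $\theta^*$, forcing $\la\in\Z$ and $e/\la\in\Z$; (ii) if $K_X\cdot\theta^*\le0$, Riemann--Roch plus successive contractions reduce to $L^*=-K_X$ on a Halphen-type surface, and then~\eqref{e:crit+} shows $F$ is an unramified self-cover of $X\setminus V$, while a separate computation (Proposition~\ref{p:halphen}) gives $\pi_1(X\setminus V)$ finite, so $e=1$, a contradiction; (iii) if $K_X\cdot\theta^*>0$, \eqref{e:crit+} forces $e=\la$, and \emph{here} is where $\theta_*$ is used: one gets $F^*\theta_*=\theta_*$, hence $\theta_*^2=0$, and~\eqref{e-crit-} gives $\theta_*\cdot K_X<0$, so Riemann--Roch makes $\theta_*$ induce a rational fibration, leading to a further geometric contradiction. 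Your appeal to ``Sakai-type Halphen-pencil results'' does not engage with any of this; Sakai's classification is used not to produce the pencil but to describe the obstruction to it, which is then killed dynamically. As written, your Case~(B) argument never uses $e<\la^2$ or $\theta_*$, which is already a sign something is missing.
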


\begin{rmk}
This result  is proved in~\cite{DDG} in the case $e<\la$. Under this assumption, the second case never appears. 
To deal with the case $e \ge \la$, we rely on an argument in~\cite{DJS,cantat} together with the classification of rational surfaces with special anti-canonical divisors given in~\cite{sakai}.
\end{rmk}

\begin{rmk}
Let us construct an example where the fibration induced by $\theta^*$ is neither elliptic nor rational. Pick any smooth curve $C$ of genus $g\ge 2$. Take $L$ a line bundle of degree $0$ such that $L^{\otimes d}$ is isomorphic to $L$ for some integer $d\ge 2$. Then as in Section~\ref{s:elliptic}, on  the compactified total space $\bar{X}(L)$, the composition map $F: \bar{X}(L) \to \bar{X}(L^{\otimes d}) \to \bar{X}(L)$ 
is holomorphic. It preserves each fiber of the natural fibration $\bar{X}(L) \to C$, and leaves
the zero section $C_0$ totally invariant. The topological degree and the asymptotic degree of $F$ are both equal to  $d$. Hence $\theta^* = [C_0]$. One can also take finite quotients of these examples. 
\end{rmk}

\begin{proof}
Let $X$ be a smooth model of $\mathbb{P}^2$ in which both  classes
$\theta^*$ and $\theta_*$ are determined. Pick $Y$ a desingularization of the graph of $F$ in 
$X\times X$ and denote by $\pi: Y \to X$ and $G: Y \to X$ the projections onto  the first and second factor respectively. Then $\pi$ is a composition of point blow-ups, $G$ is a holomorphic map, and $F = G \circ \pi^{-1}$.

We let $H^*$ be the  
sub-lattice of $\NS(X)$ generated by classes of irreducible curves $C$ such that 
$C \cdot \theta^* = 0$, and denote by $H^*_\R$ the real vector space $H^* \otimes \R \subset 
\NS_\R(X)$.

\medskip

\noindent {\bf 1.} Suppose first that $\theta^* \notin H^*_\R$. Then the intersection form on $H^*$ is negative definite so that we may contract all curves $C$ whose class in $\NS(X)$ belongs to $H^*$. We get a holomorphic modification $\varpi: X \to X_0$ where $X_0$ is a compact normal analytic space.
We claim that $F$ induces a holomorphic map on $X_0$. If it were not the case, then $F$ would admit a point of indeterminacy $p$. We could then find a curve $D\subset Y$ such that $\varpi \pi(D) =p$ and
$G(D) = C$ is an irreducible curve which is not contracted  by $\varpi$.
By construction, $\pi(D)$ is contracted by $\varpi$ hence $D \cdot \theta^* =0$, and
the image of $D$ by $G$ is $C$. Being a Cartier class determined in $X$, the pull-back of $\theta^*$ is determined in $Y$ by $G^* \theta^*_X$, where $\theta^*_X$ is the incarnation of $\theta^*$ in $X$. From the equation of invariance
$F^* \theta^* = \la \theta^*$, we get $G^* \theta^*_X = \la \pi^* \theta^*_X$.
Intersecting with $D$, we conclude
$$
\theta^*_X \cdot G_*(D) = G^*\theta^*_X \cdot D
= \la \, \pi^*\theta^*_X \cdot D
= \la \,  \theta^*_X \cdot \pi_*D
=0~.
$$
As $G_*(D)$ is proportional to $C$, this shows that the class of $C$ belongs to $H^*$, hence $C$ should be contracted by $\varpi$. This gives the required contradiction.

\medskip

\noindent {\bf 2.} Assume now that $\theta^*$ belongs to $H^*_\R$. Recall that $(\theta^*)^2 =0$.
The intersection form on $H^*$ is integral and its kernel is one-dimensional generated by $\theta^*$.
Multiplying by a suitable rational number, the class $\theta^*$ becomes integral so that one can find a line bundle $L^*$ on $X$ such that
$\theta^* = c_1(L^*)$.

Consider the map $F^\sharp \= \pi _* \circ G^*$: it is a linear map from $\NS_\R(X)$ to itself which preserves the lattice $\NS(X)$. Fix a basis of $\NS(X) = \oplus_1^h \Z \alpha_i$, $h = \dim_\R \NS_\R(X)$, and write $\theta^*_X = \sum_1^h t_i \alpha_i$ with $t_i \in \Z$.  We have $G^* \theta^*_X = \la \pi^* \theta^*_X$, hence $F^\sharp \theta^*_X = \la \theta^*_X$. This implies for all $i$:
$\la t_i  \in \sum_1^h \Z t_j$. We conclude that $\la$ belongs to $\frac1q \Z$ for some fixed integer $q$
(which can be taken as any non-zero $t_i$).
The same argument applied to the $n$-th iterate of $F$ yields $\la^n \in \frac1q \Z$ for all $n$. This implies $\la$ is an integer.

By looking at the map $F_\sharp \= G_* \circ \pi^*$, and using $F_* \theta^* = (e/\la)\, \theta^*$, we get
$e/\la \in \N^*$ by the same argument.

\medskip

 {\bf 2a.} Suppose now that the set of irreducible curves $C$ in $X$ such that 
$C\cdot \theta^*=0$ is infinite. 
Pick an infinite sequence of such curves $\{C_i\}_{i \in \N}$. As $\NS_\R(X)$ is finite dimensional, one can find a non-trivial relation $\sum_I a_i C_i = \sum_J b_j C_j$, $I,J \subset \N$. We may assume all coefficients are positive integers and $I\cap J = \emptyset$. The class $\theta \= \sum_I a_i C_i$ is then a nef class and satisfies $\theta \cdot \theta^* =0$. It is thus proportional to $\theta^* = c_1(L^*)$. This shows that some power of $L^*$ admits at least two independent sections. As $(\theta^*)^2=0$, the linear system $|nL^*|$ for $n$ large enough induces a fibration $ \varpi : X \to \mathbb{P}^1$. As $F^*\theta^* = \la \theta^*$,  the map $F$ preserves this fibration.

To complete the proof, we have to show now that under our assumption $e<\la^2$, case 1 and 2a above are the only possible ones. From now on, we thus seek a contradiction.

{\bf 2b.} We are in the following situation. The set of irreducible curves $C$ in $X$ such that $C\cdot \theta^*=0$ is finite, 
and their classes generate a subspace in $\NS(X)$ which contains $\theta^*$. Recall that $\theta^* = c_1(L^*)$ for some line bundle $L^*$. We claim that $h^0(nL^*) =1$ for all $n\ge0$.

It is clear that $h^0(nL^*) \le 1$ by the finiteness of curves orthogonal to $\theta^*$. On the other hand, 
we may write $L^*= C_+ - C_-$ with $C_+ = \sum a_i C_i$, $C_- = \sum b_j C'_j$, $a_i, b_j \ge 0$, $C_i \cdot \theta^* = C'_j \cdot \theta^* =0$, and $(\cup C_i) \cap (\cup C'_j)$ is finite. If $C_-$ is non zero,
then $(C_-)^2= C_+ \cdot C_- \ge 0$, so $C_+ =0$ by Hodge. This is not possible as $\theta^*$ is nef.
Note that the same argument shows the support $V$ of the unique section of $L^*$ is connected.

 In the sequel, we write $L^* = \sum a_i C_i$ with $a_i \ge 1$, and denote by $C'_i$ the irreducible
curves  orthogonal to $\theta^*$ and not in $V\= \cup C_i$.
We shall also assume $L^*$ is non divisible in $\Pic(X)$. Finally  recall that we know that $e/\la$ is an integer, so that $e-\la \ge 0$.

\smallskip
%

{\bf 2b1.} Suppose first  $\theta^* \cdot K_X \le 0$.
By Riemann-Roch, we get
$h^0(nL^*) \ge 1 - \frac{n}2 L^* \cdot K_X$ for all $n$, so that $L^* \cdot K_X =0$. Again by Riemann-Roch:
$$
h^0(L^*+ K_X) + h^2(L^*+K_X) \ge 1 + \frac12 \left( (L^*+K_X)^2 - K_X\, (L^*+K_X) \right) =1 ~.
$$
By Serre's duality $h^2(L^*+K_X) = h^0(-L^*) =0$, hence $L^*+K_X$ is effective. As $L^* \cdot (L^* + K_X) = 0$, we may  decompose the canonical divisor as follows:
$K_X = \sum b_i C_i+ \sum c_j C'_j$ with $ b_i,c_j   \in \Z$, $a_i \ge - b_i$, and $c_j \ge 0$.
Note that since $L^*\cdot (L^*+K_X) =0$ the $C'_j$'s have disjoint support from $V$.
The intersection form on the lattice generated by the $C'_j$'s is negative definite, hence
$\sum c_j C'_j \cdot K_X = (\sum c_j C'_j)^2 < 0 $. In particular there exists at least one index $j$ such that $C'_j \cdot K_X <0$. By adjunction, it is necessarily an exceptional $(-1)$ rational curve that we may contract. In doing so, we get a smooth rational surface in which $\theta^*$ is still determined since $\theta^* \cdot C'_j =0$. After performing finitely many such contractions, we cancel all divisors $C'_j$'s and $K_X = \sum b_i C_i$.

We now proceed by contracting all divisors among the $C_i$'s that intersect negatively $K_X$. We arrive in a situation where $K_X\cdot C_i \ge 0$ for all $i$. Since $L^* = \sum a_i C_i$ with $a_i>0$ and $L^*\cdot K_X =0$ we get $K_X \cdot C_i =0$ for all $i$, whence $K_X^2 = K_X \cdot \sum b_i C_i = 0$.
Hodge index theorem then implies that $L^*$ and $K_X$ are proportional, and we may assume $L^* = -K_X$.
By~\eqref{e:crit+}, this implies the critical set of $F$ is included in $V$ (the support of $L^*$). Hence the restriction of $F$ to the complement of $V$ in $X$
is holomorphic and non ramified. Pick a desingularization $\Gamma$ of the graph of $F$. We have birational morphism $p: \Gamma \to X$, and a holomorphic map $G: \Gamma \to X$ such that $F = G \circ p^{-1}$.
Since $F^*\theta^* = \la \theta^*$ we have $G^* L^* = \la p^* L^*$ which implies $G^{-1} (V) = p^{-1}(V)$.
In particular, the restriction of $F$ to $X\setminus V$ is necessarily proper.
We conclude that $F$ induces a finite covering map from $X\setminus V$ onto itself.

Rational surfaces such that $-K_X$ is effective and  $K_X \cdot E =0$ for all curves $E$ in the support of the anticanonical divisor are called generalized surfaces of Halphen type. They were all classified by Sakai in~\cite{sakai}.  To conclude the proof, we note that the fundamental group of the complement of the support of $L^*$ in $X$ is finite, see Proposition~\ref{p:halphen} below. This implies the topological degree of $F$ to be equal to $1$. Whence $\la = e =1$ which contradicts our working assumption $ e < \la ^2$. 
\begin{prop}\label{p:halphen}
Let $X$ be a generalized surface of Halphen type such that $h^0(-nK_X) =1$ for all $n$, and denote by $V$ an effective anti-canonical divisor. Then $\pi_1(X\setminus V)$ is finite.
\end{prop}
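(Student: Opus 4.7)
The plan is to reduce the fundamental group $\pi_1(X \setminus V)$ to a computation on a minimal model of $X$, exploiting the birational structure forced by the hypotheses.

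I would first fix a birational morphism $\sigma : X \to X_0$ onto a minimal model $X_0$, either $\mathbb{P}^2$ or a Hirzebruch surface $\mathbb{F}_n$, and decompose it as a sequence of point blowups $X = X_N \to X_{N-1} \to \cdots \to X_0$ with exceptional divisors $E_1, \ldots, E_N$. Setting $V_0 := \sigma_* V$ gives an effective anti-canonical divisor on $X_0$. The key observation will be that the relation $K_X = \sigma^* K_{X_0} + \sum_i a_i E_i$ with $a_i \ge 1$, combined with the Halphen hypothesis $K_X \cdot C = 0$ for every component $C$ of $V$, forces each blowup center $p_i$ to lie on the strict transform of $V_0$ at stage $i-1$; in particular, every irreducible component of $V_0$ must carry at least one blowup center, as otherwise the strict transform $\tilde C$ in $X$ would satisfy $K_X \cdot \tilde C \neq 0$.

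Next, since $X_0$ is simply connected, $\pi_1(X_0 \setminus V_0)$ is generated by the meridians $\mu_{C'}$ around the irreducible components $C'$ of $V_0$. I would then analyze the effect of each blowup by van Kampen's theorem: going from $X_{i-1} \setminus V_{i-1}$ to $X_i \setminus V_i$ amounts to attaching the Zariski-open piece $E_i \setminus (E_i \cap V_i)$. When $p_i$ is a smooth point of $V_{i-1}$ on a single component $C'$, this piece is an open disk $\cong \mathbb{A}^1$ glued along a meridian of $C'$, which kills $\mu_{C'}$; when $p_i$ is a normal crossing point of two components $C',C''$, the piece is an annulus $\cong \mathbb{C}^*$ identifying $\mu_{C'}$ with $\mu_{C''}$. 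After running through all $N$ blowups, every meridian of a component of $V_0$ will be trivialized, so $\pi_1(X \setminus V)$ will be the quotient of $\pi_1(X_0 \setminus V_0)$ by a normal subgroup containing all its generators, and hence will be trivial---in particular, finite.

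The hard part will be the van Kampen analysis at blowup centers that are non-normal-crossing points of the strict transform of $V_0$ (tangencies, higher-order contacts, or degenerate singularities arising after earlier blowups). I would handle this by first passing to an embedded log resolution of the pair $(X_0, V_0)$ before running the blowup sequence, so that all subsequent blowup centers lie on normal-crossing configurations; the extra exceptional curves introduced by the log resolution should only kill further meridians and preserve the conclusion. A complementary check, using the Lefschetz duality identification $H_1(X_0 \setminus V_0;\mathbb{Z}) \cong \mathrm{coker}(H^2(X_0; \mathbb{Z}) \to H^2(V_0; \mathbb{Z}))$, will confirm directly that the abelianization of $\pi_1$ is already finite on the minimal model, giving a safety net for the finiteness statement.
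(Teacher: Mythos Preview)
Your argument overlooks that the running anti-canonical divisor $V_i$ picks up exceptional components whenever a blowup center is a singular point of $V_{i-1}$. Concretely, if $p_i$ is a node of $V_{i-1}$ then $m_{p_i}(V_{i-1})=2$, so $E_i$ enters $V_i=\tilde V_{i-1}+(m_{p_i}-1)E_i$ with coefficient $1$, and $X_i\setminus V_i\cong X_{i-1}\setminus V_{i-1}$: the blowup leaves the complement unchanged. Hence no meridian is killed or identified at such a step, contrary to your ``annulus $\cong\mathbb C^*$'' claim (that picture would be correct only if you removed the \emph{strict transform} of $V_0$, but then you are no longer computing $\pi_1(X\setminus V)$). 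The only blowups that genuinely kill a meridian are those at smooth points of $V_{i-1}$ lying on a single reduced component. There is no guarantee every component receives such a blowup, and in Sakai's cases Mul$8$--Mul$10$ one first reaches a toric surface with a nine-component boundary cycle and then blows up smooth points on only three of the nine components; the resulting $\pi_1(X\setminus V)$ is the quotient of $\pi_1((\mathbb C^*)^2)\cong\mathbb Z^2$ by the sublattice spanned by three primitive ray generators, which is finite but in general nontrivial. So the conclusion that $\pi_1(X\setminus V)$ is trivial is false, and your Lefschetz-duality ``safety net'' does not rescue it: for $V_0$ three lines in general position one already has $H_1(\mathbb P^2\setminus V_0;\mathbb Z)\cong\mathbb Z^2$, which is infinite.

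The paper proceeds instead through Sakai's explicit list of the $22$ configurations. For most of them one can (after a harmless topological deformation of the centers) arrange that every component of $V_0$ does carry a smooth-point blowup, and then the meridian-killing idea---combined with the Lefschetz surjection $\pi_1(H\setminus V_0)\twoheadrightarrow\pi_1(\mathbb P^2\setminus V_0)$ for a generic line $H$---does give $\pi_1=1$. The exceptional cases Mul$8$--$10$ and Add$7$--$8$ are handled separately, using the toric description (or an explicit look at the configuration) to verify that the meridians that \emph{are} killed span a finite-index subgroup.
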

We give a proof of this fact at the end of this section.



%

\medskip

{\bf 2b2.} Finally assume that $K_X\cdot \theta^* >0$. Note that by~\eqref{e:crit+}, $e =\la$. Write $\theta^* = c_1(L^*)$, $L^* = \sum a_i C_i$ with $a_i \ge1$ and $C_i$ irreducible as above, and let  $V\= \cup C_i$ as before.
As $F^* \theta^* = \la \theta^*$ and $nL^*$ has a unique section for all $n$, 
$V$ is totally invariant by $F$.
We now use the fact that $\theta_*$ is Cartier and assume it is determined in $X$. Since $\la = e$ is an integer, and $\la$ is a simple eigenvalue of $F_*$, the class
$\theta_*$ also belongs to $\NS(X)$.
Consider now the unique set of non-negative integers $b_i \ge0$ such that
$\sum b_i \ord_{C_i}(Z) = Z \cdot \theta_*$ for any divisor $Z$ supported on $V$.

Pick a model $\mu: X' \to X$ such that the induced map $F': X \dashrightarrow X'$  by $F$ does not contract any of the
divisors $C_i$. Let  $D_i$ be the (proper) image of $C_i$ by $F$.
Then $F_* \ord_{C_i}$ is proportional to $\ord_{D_i}$.
Pick any Cartier divisor $W$ which is determined in $X'$. Since $F_* \theta_* = \la \theta_*$ in $\NS(\fP)$, we have:
$$(F')^\#W_{X'}\cdot (\theta_*)_{X}= F^*W \cdot \theta_* = \frac{1}{\la} \, W \cdot \theta_*= W_{X'}\cdot (\theta_*)_{X'}$$
and
$\sum b_i (F_*\ord_{C_i})(W) = \frac{1}{\la} \,\sum b_i  \ord_{C_i}(W)$.
If $b_i>0$, the curve $D_i$ is thus one of the $C_j$'s.
We conclude that  $F_*$ permutes the set of divisorial valuations
$\ord_{C_i}$ with $b_i>0$. In particular, replacing $F$ by an iterate, we may suppose all curves with $b_i>0$ are fixed. Again by the invariance property of $\theta_*$, for  a curve $C_i$ with $b_i >0$,
we get that  $F^*[C_i] - \la [C_i]$ has no support on $C_i$.
But  the topological degree of $F$ being equal to $\la$, this means $C_i$ is totally invariant in the following strong sense. Pick a model $\pi: X'\to X$ dominating $X$ such that the map $F' \= F \circ \pi$ is holomorphic.
Then no prime divisor in $X'$ is mapped surjectively onto $C_i$ except for the strict transform of $C_i$ itself. Contract all curves $C_i$ with $b_i =0$. Then
$F$ is  holomorphic in this new model at least along the image of $V$. Contracting curves orthogonal to $\theta^*$ and not in $V$, we get an induced map which is holomorphic everywhere.
To simplify notation we continue to denote by $X$ this model.

Let us now focus on $\theta_*$. Because $\theta_*$ belongs to $\NS(X)$, we have $\theta_* = c_1(L_*)$ for some line bundle $L_*$.
Write $F^*\theta_* = \theta_* + h$ with $h \in \NS(X)$. Then  $e \theta_* = F_* F^*\theta_* = F_*\theta_* + F_* h$, hence $F_* h =0$ as $e=\la$. As in the proof of Proposition~\ref{p:cartiert*}, the map $F_*$ is injective on $\NS(X)$, so that $h=0$. We get $F^*\theta_* = \theta_*$ which implies $\theta_*^2 =0$.
From~\eqref{e-crit-}, we get $\theta_* \cdot K_X = - (\la-1)^{-1} \theta_* \cdot JF$ and
$JF \ge (\la-1) \sum C_i\ge \e \theta^*$ for some small $\e>0$, whence $\theta_*\cdot K_X <0$. Riemann-Roch then implies $h^0(L_*) \ge 2$. We conclude that $L_*$ determines a rational fibration which is preserved by $F$.

Let us go back to the support $V$ of $L^*$.  It is necessarily irreducible, otherwise $e$ would be equal to $\la^2$ (apply~\eqref{e:pp2} at an intersection point for instance and recall that $V$ is connected). As $\la >1$, $F$ is critical along $V$ hence all singular points on $V$ are quotient singularities by Theorem~C~(3). It follows that $V$ is $\Q$-Cartier.  The arithmetic genus of $V$ is greater or equal to $2$ because $K_X \cdot V >0$. Hence  the restriction map $F|_V$ is of finite order, so that we can assume it to be the identity.
For a point $x \in V$, denote by $L_x$ the fiber of the rational fibration lying over $x$, and $F_x$ the induced map on $L_x$. It is a rational self-map of $L_x$ which leaves totally invariant $L_x \cap V$.
Look at the set $V_0 = \{ (x,y), x\in V, y \in L_x, F_x(y) = y \}\setminus V$. It is a closed analytic subspace of $X$ which does not intersect $V$. Hence $V_0 \cdot V =0$. This gives a contradiction, as we assumed we had contracted all curves orthogonal to $\theta^*$.
\end{proof}

\begin{proof}[Proof of Proposition~\ref{p:halphen}]

Let $X$ be a generalized surface of Halphen type with $h^0(-nK_X) =1$ for all $n$  and let $\Gamma$ be its anti-canonical divisor. By~\cite{sakai}, the surface $X$ is obtained by blowing up a configuration of nine points in $\mathbb{P}^2$, and there are $22$ possible configurations.
\begin{lem}
 Let $C\subset X$ be any compact analytic curve on a smooth compact complex surface.
Consider $\pi: X' \to X$ the blow-up of $X$ at a point $p\in C$, and $C'$ the strict transform of $C$.
Then the map $\pi_1(X\setminus C) \to \pi_1(X'\setminus C')$ given by the inclusion map $\pi^{-1}: X \setminus C \to X'\setminus C'$ is surjective.
\end{lem}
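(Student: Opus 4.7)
The plan is to combine a global van Kampen decomposition with a local analysis near $p$. Pick a small open ball $B\subset X$ around $p$ such that $C\cap B$ has conical structure, let $B'\subset B$ be slightly smaller with $\overline{B'}\subset B$, and set $\tilde B=\pi^{-1}(B)$, $\tilde B'=\pi^{-1}(B')$. Since $\pi$ is biholomorphic outside $E$, the decomposition $X\setminus C=((X\setminus\overline{B'})\setminus C)\cup(B\setminus C)$ is identified via $\pi$ with the decomposition $X'\setminus C'=((X'\setminus\overline{\tilde B'})\setminus C')\cup(\tilde B\setminus C')$ both on the first piece and on the overlap $(B\setminus\overline{B'})\setminus C$. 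Applying van Kampen to both decompositions and comparing the resulting amalgamated products, one sees that surjectivity of $\pi_1(X\setminus C)\to\pi_1(X'\setminus C')$ reduces to surjectivity of the local map
\[
\pi_1(B\setminus C)\longrightarrow\pi_1(\tilde B\setminus C').
\]

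To prove the latter, I apply van Kampen once more inside $\tilde B\setminus C'$. Choose an open tubular neighborhood $W$ of $E$ in $\tilde B$, set $V_2=W\setminus C'$, and let $V_1$ be an open neighborhood of $\tilde B\setminus(C'\cup E)$ in $\tilde B\setminus C'$ that deformation retracts onto this set; via $\pi$, $V_1$ is homotopy equivalent to $B\setminus C$. Since $V_1\cup V_2=\tilde B\setminus C'$, van Kampen tells us that $\pi_1(\tilde B\setminus C')$ is generated by the images of $\pi_1(V_1)$ and $\pi_1(V_2)$, so it suffices to show that every loop in $V_2$ is homotopic in $\tilde B\setminus C'$ to a loop in $V_1$.

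Before removing $C'$, the tubular neighborhood $W$ deformation retracts onto $E\cong\mathbb{P}^1$ and is simply connected. The standard fact that the kernel of $\pi_1(W\setminus C')\to\pi_1(W)=1$ is normally generated by meridians of $C'$ shows that $\pi_1(V_2)$ is normally generated by meridian loops around the local branches of $C'$ at the points of $C'\cap E$. Each such branch is an analytic arc that leaves every tubular neighborhood of $E$, so the corresponding meridian can be slid along the branch inside $\tilde B\setminus C'$ until it reaches a point of $C'$ lying outside $W$; there it lies in $V_1$. This produces the required homotopies and establishes the lemma.

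The step I expect to require the most care is the meridian-sliding argument, in particular when several branches of $C'$ meet $E$ at the same point or when a branch is tangent to $E$. The cleanest way to handle these cases is to choose local coordinates at each $p_i\in C'\cap E$ and present $\pi_1(V_2)$ using the well-known description of the fundamental group of the complement of a plane analytic curve in a bidisk; the meridians are then defined up to conjugation and can be continuously transported along the branches of $C'$ outside any fixed compact subset of $\tilde B$.
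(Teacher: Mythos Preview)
Your argument is correct, but it is far more elaborate than necessary. The paper disposes of the lemma in a single sentence by invoking the connectedness of the fibers of $\pi$ restricted to $X'\setminus C'$. Unpacked: the image of $\pi^{-1}:X\setminus C\hookrightarrow X'\setminus C'$ is exactly $X'\setminus(C'\cup E)$, and what remains, namely $E\setminus C'$, is a connected complex curve of real codimension two in the four-manifold $X'\setminus C'$. By transversality, every loop in $X'\setminus C'$ can be perturbed off $E$ and hence into the image of $\pi^{-1}$; surjectivity on $\pi_1$ follows immediately.

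Your two-step van Kampen decomposition and meridian-sliding argument reach the same conclusion, and the delicate cases you flag at the end (tangent branches, several branches through one point of $E$) are precisely what the codimension-two observation sidesteps. Both routes are valid; the paper's buys a one-line proof, while yours makes the local geometry around $E$ explicit --- potentially useful if one also wanted to describe the kernel, but overkill for surjectivity alone.
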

\begin{lem}\label{l:lefs}
Suppose $V$ is a possibly reducible curve in $\mathbb{P}^2$.
Then for any line $H\subset \mathbb{P}^2$ which intersects $V$ transversely at smooth points, the map $\pi_1(H \setminus V) \to \pi_1(\mathbb{P}^2 \setminus V)$ is surjective.
\end{lem}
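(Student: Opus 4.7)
My plan is to prove this via a Lefschetz pencil argument. Fix an auxiliary point $q \in \mathbb{P}^2 \setminus (V\cup H)$ and blow it up, obtaining a surface $\widetilde{\mathbb{P}^2}$ with exceptional divisor $E$. The pencil of lines through $q$ becomes a morphism $f\colon \widetilde{\mathbb{P}^2} \to \mathbb{P}^1$ whose fibers are the strict transforms of these lines; let $t_0\in \mathbb{P}^1$ be the parameter with $f^{-1}(t_0)$ equal to the strict transform of $H$, and let $\tilde V$ denote the strict transform of $V$. Since $q\notin V$ we have $\tilde V\cap E = \emptyset$, and the identification $\widetilde{\mathbb{P}^2}\setminus(\tilde V\cup E) = \mathbb{P}^2\setminus(V\cup\{q\})$, together with the fact that removing a single point from a complex surface does not affect $\pi_1$, reduces the lemma to showing surjectivity of $\pi_1(H\setminus V)\to \pi_1(\widetilde{\mathbb{P}^2}\setminus(\tilde V\cup E))$.

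Let $B\subset \mathbb{P}^1$ be the finite set of parameters $t$ for which $f^{-1}(t)$ fails to meet $V$ transversely at smooth points (either tangent to $V$, or passing through a singular point of $V$); by hypothesis $t_0\notin B$. The restriction of $f$ over $\mathbb{P}^1\setminus B$ to the complement of $\tilde V\cup E$ is a locally trivial topological fibration with fiber $H\setminus V$, and its long exact homotopy sequence yields
\[
\pi_1(H\setminus V) \;\longrightarrow\; \pi_1\bigl(f^{-1}(\mathbb{P}^1\setminus B)\setminus(\tilde V\cup E)\bigr) \;\longrightarrow\; \pi_1(\mathbb{P}^1\setminus B)\;\longrightarrow\; 1.
\]
Moreover, the ``bad'' fibers form a subset of real codimension two inside $\widetilde{\mathbb{P}^2}\setminus(\tilde V\cup E)$, so the inclusion of the generic part induces a surjection on $\pi_1$ onto $\pi_1(\widetilde{\mathbb{P}^2}\setminus(\tilde V\cup E))$. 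The problem therefore reduces to showing that the image of $\pi_1(\mathbb{P}^1\setminus B)$ in this full $\pi_1$ is trivial, i.e.\ that every meridian around a bad parameter dies once the bad fibers are restored.

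For this final step, one takes a small loop $\gamma$ around an arbitrary $t_1\in B$, trivializes $f^{-1}(D)\cong D\times \mathbb{P}^1$ on a bidisk neighborhood $D\ni t_1$, and chooses a point $z_0\in \mathbb{P}^1$ lying outside the projection to the second factor of $(\tilde V\cup E)\cap f^{-1}(D)$. Such a $z_0$ exists for $D$ sufficiently small because $\tilde V$ contains no fiber of $f$ (as $V$ has no irreducible component that is a line through $q$) and $E$ meets each fiber in a single point varying continuously with $t$. The constant section $t\mapsto (t,z_0)$ then produces a holomorphic disk in $\widetilde{\mathbb{P}^2}\setminus(\tilde V\cup E)$ that fills in any lift of $\gamma$, killing the corresponding homotopy class. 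The main obstacle is precisely this local transversality/section argument at each bad parameter; once it is established, surjectivity of $\pi_1(H\setminus V)\to \pi_1(\mathbb{P}^2\setminus V)$ follows by combining the two exactness statements above.
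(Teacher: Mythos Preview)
The paper does not actually prove this lemma: it simply records it as ``a classical result of Lefschetz'' and points to \cite{kulikov}. Your proposal is essentially the standard Lefschetz--Zariski pencil proof, and the fibration/exact-sequence/local-section skeleton is correct.

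There is, however, a genuine setup error. You choose $q\in\mathbb{P}^2\setminus(V\cup H)$ and then assert that the strict transform of $H$ is a fiber $f^{-1}(t_0)$ of the pencil of lines through $q$. Since $H$ does not pass through $q$, this is false: $H$ is simply not a member of that pencil, and the generic fiber of your $f$ (a line through $q$ minus $\deg V$ points of $V$ and one point of $E$) is not even homeomorphic to $H\setminus V$. The remedy is to take $q\in H\setminus V$ instead. Then $H$ \emph{is} a line through $q$, its strict transform is $f^{-1}(t_0)$, and the fiber $f^{-1}(t_0)\setminus(\tilde V\cup E)$ identifies with $H\setminus(V\cup\{q\})$. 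Your argument then yields surjectivity of $\pi_1\bigl(H\setminus(V\cup\{q\})\bigr)\to\pi_1(\mathbb{P}^2\setminus V)$; since this map factors through $\pi_1(H\setminus V)$, the lemma follows. Note also that with $q\in H\setminus V$ one automatically has $q\notin V$, so no irreducible component of $V$ can be a line through $q$, and your local-section step goes through unchanged.
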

The first lemma is a consequence of the connectedness of the fibers of the map $\pi: X'\setminus C' \to X \setminus C$, whereas the second is a classical result of Lefschetz  (see~\cite{kulikov}).

\smallskip

Suppose first that $V$ is irreducible (case Ell$1$, Add$1$, Mul$1$ of~\cite[Appendix~B]{sakai}). Then $X$ is obtained by blowing up $9$ points on a irreducible cubic curve $V_0$ in $\mathbb{P}^2$.  Note that moving the position of the nine points on $V$ does not change the topological type of 
$X$ and $X\setminus V$, so that we may assume that three of these points are collinear and belong to some line $H$ intersecting $V$ transversely only at smooth points.
Thanks to Lemma~\ref{l:lefs}, $\pi_1(\mathbb{P}^2 \setminus V_0)$ is generated by small loops 
$\gamma$ included in $H$ and turning around an intersection point $p \in L \cap V_0$.
When $p$ is blown-up, the strict transform $H'$ of $H$  intersect the exceptional divisor at a point
$q$ which does not belong to $V$. We can hence find a homotopy of the pull-back of $\gamma$ to the constant path $q$ in $X\setminus V$.
Now pick a loop in $X\setminus V$. The exceptional divisor of the projection $\pi: X \to \mathbb{P}^2$ have codimension $2$, hence one can deform it to a loop which avoids $\pi^{-1} (V_0)$. 
It is homotopic in $\mathbb{P}^2 \setminus V_0$ to a sum of small loops in $H$ as above. The lift of these loops in $X\setminus V$ is homotopic to $0$. Hence $X\setminus V$ is simply connected.

In general $X$ is obtained by blowing up $9$ points (possibly infinitely near) on a curve $V_0$ in $\mathbb{P}^2$. Suppose one can find for each irreducible component $C$ of $V_0$ at least 
$\deg(C)$ distinct smooth points each of which is blown up only once (that is its preimage in $X$ is an irreducible exceptional $(-1)$ curve). Then the same argument
as before applies. This shows that $X\setminus V$ is simply connected in all cases but Mul$7$, Mul$8$, Mul$9$, Mul$10$, Add$6$, Add$7$, Add$8$, Add$9$, Add$10$, Add$11$.

Add$11$ is obtained by blowing up $9$ points on a line whose complement in $\mathbb{P}^2$ is simply connected. Hence
$X\setminus V$ is simply connected.

Add$9$ and Add$10$ (resp. Add$6$ and Mul$7$) are obtained by a sequence of blow-ups on the union of two (resp. three) lines, and there exists a point (resp. two on two distinct components) on the regular locus which is blown up only once. Again $X\setminus V$ is simply connected.

For Mul$8$, Mul$9$ and Mul$10$, then $X$ is obtained from a toric surface $X'$ by blowing three points on three distinct torus-invariant prime divisors. Small loops around these three points do not necessarily
generate the fundamental group of $X' $ minus the torus-invariant curves, but it generates a subgroup of finite index, see~\cite{danilov}.  Hence the fundamental group of  $X\setminus V$ is finite.

In case Add$8$, we may use the description of~\cite[p. 221]{sakai}. A small loop near $p_7$ turning around the exceptional divisor is mapped to  a finite number of times a loop at $p_4$ around $x=0$ in $\mathbb{P}^2$. We conclude that $\pi_1(X\setminus V)$ is finite.
The same argument applies in the last remaining case Add$7$.
\end{proof}


\subsection{Fibrations}\label{s:fibration}

Let us begin with some generalities on fibrations. 
A fibration on a surface $X$ is a regular surjective map $\varpi: X \to B$ with $B$ a Riemann surface such that $\varpi$ has connected fibers.  A dominant rational map $F: X \to X$ preserves a fibration $\varpi$ iff there exists a holomorphic map $g: B \to B$ such that 
$\varpi \circ F = g \circ \varpi $. When $X$ is rational, then $B$ is a Riemann sphere.
\begin{prop}\label{p:fibr}
Suppose $F$ preserves a fibration.
Let $e_\varpi$ be the topological degree of the restriction of $F$ to a generic fiber of the fibration; and $e_B$ be the topological degree of $g$. Then 
\begin{equation}\label{e:compute}
e = e_\varpi\times e_B \text{ and } \la = \max \{ e_\varpi, e_B\}~.
\end{equation}
\end{prop}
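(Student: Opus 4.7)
The equality $e = e_\varpi e_B$ I would establish by preimage counting: for a generic point $p \in X$ with $b = \varpi(p)$ a regular value of $g$, the preimage $g^{-1}(b)$ consists of $e_B$ distinct points $b_1, \ldots, b_{e_B}$, and on each fiber $\varpi^{-1}(b_i)$ the restriction of $F$ is a generically $e_\varpi$-to-one covering of $\varpi^{-1}(b)$; whence $\#F^{-1}(p) = e_Be_\varpi$.

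For $\la = \max\{e_\varpi,e_B\}$, the entire analysis hinges on the observation that the fiber class $f \= \varpi^*[\mathrm{pt}]$ is a Cartier class in $X$ satisfying $F^*f = e_B f$, as a direct consequence of $\varpi\circ F = g\circ\varpi$ together with $g^*[\mathrm{pt}] = e_B[\mathrm{pt}']$ in $\NS(B)$. Applying $F_*$ and invoking~\eqref{e:pp2} yields the dual relation $F_*f = e_\varpi f$. From these two identities I would derive both lower bounds: $\la \ge e_B$ because $f$ is a nef eigenvector of $F^*$ with eigenvalue $e_B$; and $\la \ge e_\varpi$ because, using~\eqref{e:dual2}, $F^*$ preserves the hyperplane $f^\perp \subset \Ltwo(\fP)$ (if $\al\cdot f = 0$, then $(F^*\al)\cdot f = e_B^{-1}(F^*\al)\cdot(F^*f) = e_B^{-1}e(\al\cdot f) = 0$) and acts on the quotient line $\Ltwo(\fP)/f^\perp$ by multiplication by $e_\varpi$.

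The upper bound I would obtain by exploiting the $F^*$-stable filtration $\R f \subset f^\perp \subset \Ltwo(\fP)$. Choosing any class $s$ with $s\cdot f = 1$ yields $\Ltwo(\fP) = \R s \oplus f^\perp$ in which $F^*$ becomes block upper triangular. The three pieces contribute, respectively, the scaling factors $e_\varpi$ (from the top quotient, as computed above), $e_B$ (from $\R f$, on which $F^*$ acts as multiplication by $e_B$), and $\sqrt{e} = \sqrt{e_Be_\varpi}$ from $f^\perp/\R f$: the intersection form there is negative definite by Hodge index applied to the Lorentzian space $\Ltwo(\fP)$ with isotropic vector $f$, and the relation $(F^*\al)^2 = e \al^2$ from~\eqref{e:dual2} forces $F^*$ to act as a similitude of ratio $\sqrt{e}$ on this piece. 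Gathering the three contributions, the spectral radius of $F^*$ is bounded above by $\max(e_B, e_\varpi, \sqrt{e_Be_\varpi}) = \max(e_B, e_\varpi)$, which matches the lower bounds and gives the formula.

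The hardest part will be the rigorous identification of $\la$ with the spectral radius of the bounded operator $F^*$ on $\Ltwo(\fP)$, together with the verification that the extension of $F^*$ to this completion preserves the filtration $\R f \subset f^\perp$ and still satisfies the intersection identity~\eqref{e:dual2} on it; both facts are part of the framework of~\cite{deggrowth}, and once they are granted the three-piece analysis above pins down $\la$ exactly.
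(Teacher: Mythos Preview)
Your proof is correct, but the route differs from the paper's. Both begin the same way: the fiber class $P$ (your $f$) satisfies $F^*P = e_B P$ and $F_*P = e_\varpi P$, and since $\la$ is the common spectral radius of $F^*$ and $F_*$ on $\Ltwo(\fP)$, this gives $\la \ge \max\{e_B,e_\varpi\}$ at once.

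For the reverse inequality the paper proceeds by a dichotomy rather than a filtration. If $\la^2 = e = e_B e_\varpi$, the lower bound alone forces $e_B = e_\varpi = \la$. If $e < \la^2$, the paper invokes Theorem~\ref{t:funda}: the only nef eigen-classes for $F^*$ or $F_*$ are $\theta^*$ and $\theta_*$, so $P$ must coincide with one of them, yielding $e_B = \la$ or $e_\varpi = \la$ directly. Your approach instead reads off the spectral radius of $F^*$ from the three graded pieces of the $F^*$-stable flag $\R f \subset f^\perp \subset \Ltwo(\fP)$, treating both cases uniformly and never invoking the existence or uniqueness of $\theta^*,\theta_*$. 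The trade-off is exactly what you flag at the end: you must know that $\la$ coincides with the spectral radius of the bounded operator $F^*$ on the completion, that~\eqref{e:dual2} extends there, and that the block-triangular spectral-radius identity holds for this infinite-dimensional flag. All of this is supplied by~\cite{deggrowth}, so your argument goes through. One small simplification: for the bound $\la \ge e_\varpi$ it is shorter to note, as the paper does, that $f$ is already an $F_*$-eigenvector with eigenvalue $e_\varpi$, rather than passing to the quotient $\Ltwo(\fP)/f^\perp$.
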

\begin{proof}
The computation of the topological degree is immediate. Let $P$ be the Cartier class determined in $X$ by the class of a fiber of $\varpi$. Then $F_* P = e_\varpi P$ and $F^* P = e_B P$. Hence $\la \ge \max \{ e_\varpi, e_B \}$. If $\la^2 = e$ then this implies $\la = e_B = e_\varpi$. Otherwise there exists
only two nef classes which are either $F_*$ or $F^*$ invariant, hence $P = \theta^*$ or $P = \theta_*$. In the first case one has $\la = e_B$ and in the second case $\la = e_\varpi$.
This shows~\eqref{e:compute}.
\end{proof}
An isotrivial fibration is a fibration for which any two smooth fibers are analytically diffeomorphic.
Basic examples are of the following type.
Pick a Riemann surface $C$ of genus $\ge 1$, an automorphism $h$ of $C$ of finite order, and $n$ a multiple of the order of $h$. Denote by $\Delta$ the unit disk in $\C$. Pick a primitive $n$-th root of unity $\zeta$, and define the space $X(h,n)$ to be the quotient of $\Delta \times C$ by the relation $(t,z) \simeq (\zeta t, h(z))$.
It is a complex analytic space with quotient singularities. The map $(t,z) \mapsto t^n$ descends to the quotient and induces an isotrivial fibration on
$X(h,n)$. Except for the central fiber which is isomorphic to $C/h$, the other ones are isomorphic to $C$.

Denote by $\Delta^*$ the complement of $0 \in \Delta$, and $X^*(h,n)$ the complement of the central fiber in $X(h,n)$. The following well-known result is left to the reader (for a proof consider the restriction of $\pi$ over $\Delta^*$ and lift the fibration to the universal cover of $\Delta^*$).
\begin{lem}\label{lem:str-iso}
Suppose $\pi: V \to \Delta$ is an isotrivial fibration, locally trivial over $\Delta^*$ whose generic fiber $C$
has genus at least $1$. Then there exist  an automorphism $h$ of $C$ of finite order, a multiple $n\in \N^*$ of its order, and a bimeromorphic map
$\Phi: X(h,n) \dashrightarrow V$ that is regular over $X^*(h,n)$ and such that $\pi \circ \Phi : X(h,n) \to \Delta$ is the natural fibration.
\end{lem}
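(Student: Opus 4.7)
The plan is to trivialize $V$ over the universal cover of $\Delta^*$, show that the resulting monodromy has finite order, and then recognize $V$ as the quotient appearing in the definition of $X(h,n)$.

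First, I would set $V^*\=\pi^{-1}(\Delta^*)$ and pull this locally trivial holomorphic fibration back along the universal covering $p: \H \to \Delta^*$, $\tau \mapsto e^{2i\pi\tau}$. Since $\H$ is contractible and Stein, the Oka--Grauert principle yields a holomorphic trivialization $p^*V^* \simeq \H \times C$ commuting with projection to $\H$. Under this trivialization, the generator $\tau \mapsto \tau+1$ of the deck group of $p$ lifts to a biholomorphism of $\H \times C$ of the form $(\tau,z)\mapsto (\tau+1,\phi(\tau)(z))$ with $\phi:\H\to\mathrm{Aut}(C)$ holomorphic. After possibly modifying the trivialization, we may assume $\phi\equiv h$ is constant: when $C$ has genus $\geq 2$ this is automatic because $\mathrm{Aut}(C)$ is finite, and when $C$ has genus $1$ one uses that $\phi$ lies in a single connected component of $\mathrm{Aut}(C)$ together with a cohomological adjustment by a holomorphic family of translations $\H \to C$, which is possible because $H^1(\H,\cO)=0$.

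The main obstacle is showing that $h$ has finite order. This is where one must use in an essential way that $\pi$ is proper on all of $\Delta$ and not just on $\Delta^*$. If $h$ had infinite order in $\mathrm{Aut}(C)$, then no finite cyclic base change would trivialize the monodromy, and the cyclic quotient $(\H \times C)/\langle (\tau,z)\mapsto(\tau+1,h(z))\rangle$ could not be extended to a proper holomorphic fibration across $0\in\Delta$ by adding a single compact fiber; in particular $h$ cannot be a translation of infinite order on an elliptic $C$ nor a free generator of the infinite discrete part of $\mathrm{Aut}(C)$ in the higher-genus case. This is the classical local monodromy statement for isotrivial families of curves of genus $\geq 1$, and one can prove it directly by analyzing the behaviour of the orbits of $\langle\tilde T\rangle$ near $\mathrm{Im}(\tau) = +\infty$ and showing that any proper completion would force $h$ to have a periodic orbit everywhere, hence finite order.

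Once $h^{n_0} = \mathrm{id}$ for some $n_0\geq 1$, take any positive multiple $n$ of $n_0$ and consider the cyclic base change $q:\Delta\to\Delta$, $s\mapsto s^n$. The monodromy of the pullback $q^*V^*$ equals $h^n = \mathrm{id}$, so $q^*V^*$ is holomorphically trivial and isomorphic to $\Delta^* \times C$. Descending by $q$ identifies $V^*$ with the quotient $(\Delta^*\times C)/(\Z/n)$ where the generator acts by $(s,z)\mapsto(\zeta s,h(z))$ with $\zeta = e^{2i\pi/n}$; this is precisely $X^*(h,n)$ by definition of the latter, producing a biholomorphism $\Phi^*: X^*(h,n)\xrightarrow{\sim} V^*$ compatible with the projections to $\Delta$. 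Since both $X(h,n)$ and $V$ are proper over $\Delta$ and agree over $\Delta^*$, the map $\Phi^*$ extends uniquely to the required bimeromorphic map $\Phi:X(h,n)\dashrightarrow V$, regular on $X^*(h,n)$ and intertwining the two fibrations by construction.
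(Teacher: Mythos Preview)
Your approach is exactly the one the paper suggests: it does not actually prove the lemma but only gives the hint ``consider the restriction of $\pi$ over $\Delta^*$ and lift the fibration to the universal cover of $\Delta^*$'', and your argument carries this out correctly. One small slip: for genus $\ge 2$ the group $\mathrm{Aut}(C)$ is already finite (Hurwitz), so there is no ``infinite discrete part'' and the finite-order step is only an issue in the elliptic case, where your sketch via non-extendability (equivalently, quasi-unipotence of local monodromy) is the standard argument.
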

The following result was already noticed in~\cite{cantat}.
\begin{prop}\label{p:isotrivial}
Pick $C$ a curve of genus $\ge 1$, $h,h '$ automorphisms of finite order, and $n,n'$ integers dividing the order of $h$ and $h'$ respectively. Any generically finite rational map  $F: X(h,n) \dashrightarrow X(h',n')$ that maps fibers to fibers extends holomorphically through the central fiber.
\end{prop}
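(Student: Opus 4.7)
The plan is to pass to finite covers that trivialize both isotrivial fibrations, lift $F$ to a rational map between the trivial bundles, use rigidity of morphisms of curves of positive genus to put the lift in product form, and then descend.

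Set $M := nn'$. The maps $\pi\colon \Delta_w\times C \to X(h,n)$, $(w,z)\mapsto [w^{M/n},z]$ and $\pi'\colon \Delta_{w'}\times C \to X(h',n')$, $(w',z)\mapsto [(w')^{M/n'},z]$ are $\mu_M$-Galois covers with actions $\eta\cdot(w,z) = (\eta w, h^{j(\eta)}(z))$ and $\eta\cdot(w',z) = (\eta w', (h')^{j'(\eta)}(z))$, where $\zeta^{j(\eta)} = \eta^{M/n}$ and $(\zeta')^{j'(\eta)} = \eta^{M/n'}$. The base map $g\colon\Delta\to\Delta$ induced by $F$ admits an expansion $g(s) = s^k u(s)$ near $s=0$ with $u(0)\ne 0$ (take $k=0$ if $g(0)\ne 0$); then $\tilde g(w) := w^k u(w^M)^{1/M}$ is a holomorphic lift, where we fix a branch of the $M$-th root of $u$. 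The resulting lifting problem for $F$ through $\pi'$ is solvable (the $\pi_1$-obstruction reduces to $M\mid Mk$), producing a rational map $\tilde F\colon \Delta_w\times C \dashrightarrow \Delta_{w'}\times C$ covering $\tilde g$, holomorphic on $\Delta_w^*\times C$. Being fiber preserving, $\tilde F(w,z) = (\tilde g(w), \phi_w(z))$ there, with $\{\phi_w\}_{w\in \Delta_w^*}$ a holomorphic family of nonconstant morphisms $C\to C$.

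The crucial step is rigidity. If $C$ has genus $\geq 2$, de Franchis' theorem gives finitely many nonconstant morphisms $C\to C$, so $\phi_w$ is locally constant, hence equal to a fixed $\phi$. If $C$ has genus $1$, fix an origin; every morphism then takes the form $z\mapsto \alpha z +\beta$ with multiplier $\alpha \in\mathrm{End}(C)\subset \C$, a discrete subring. Hence $\alpha$ is locally constant and $\phi_w(z) = \phi(z) + \tau(w)$ for a fixed $\phi$ and a holomorphic $\tau\colon\Delta_w^*\to C$. Since $\tilde F$ is a rational map between smooth surfaces, its indeterminacy has codimension at least $2$, so $\tau$ must extend at $w=0$ (otherwise the entire curve $\{0\}\times C$ would lie in the indeterminacy locus); compactness of $C$ then forces the extension to be holomorphic. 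Thus $\tilde F$ extends holomorphically to all of $\Delta_w\times C$.

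It remains to descend. On $\Delta_w^*\times C$ the identity $\tilde g(\eta w) = \eta^k\tilde g(w)$ shows that $\tilde F$ is $\mu_M$-equivariant for the homomorphism $\rho(\eta) := \eta^k$, a property inherited from the descent of $F$ to the original quotients. By continuity the equivariance persists on $\Delta_w\times C$, so $\tilde F$ descends to a continuous map $X(h,n) \to X(h',n')$, holomorphic on the smooth locus and agreeing with $F$ there; normality of the quotients extends the holomorphicity across the singular points. The main obstacle is the rigidity step, which is precisely where the hypothesis that $C$ has genus $\geq 1$ enters: for a rational fiber, $\phi_w$ could vary in a continuous family of M\"obius transformations, no product form would be available, and indeed the statement fails in that case.
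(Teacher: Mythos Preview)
Your proof is correct and follows the same architecture as the paper's: lift $F$ through the finite covers $\Delta\times C\to X(h,n)$ and $\Delta\times C\to X(h',n')$, extend the lift holomorphically across $w=0$, and descend. The difference lies in the extension step. The paper observes that once the lift is known to be a rational map on $\Delta\times C$ (which follows from the rationality of $F$ by taking the closure of the graph), it is automatically holomorphic, since $\Delta\times C$ contains no compact rational curve and hence a rational self-map of it can have no indeterminacy. You instead put the lift in product form via rigidity of morphisms of curves (de~Franchis in genus $\ge 2$, discreteness of the multiplier in genus $1$) and then, in the elliptic case, extend the residual translation $\tau$ using the codimension-$2$ indeterminacy of the rational lift. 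Your genus $\ge 2$ argument is in fact more elementary than the paper's, as it needs no appeal to the rationality of the lift at all; your genus $1$ case uses the same input as the paper but unpacks it by hand. The paper's route is terser and uniform across genera, while yours makes the role of the hypothesis $g(C)\ge 1$ completely explicit.
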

\begin{proof}
By assumption,  $F$ induces an unramified finite cover from $X^*(h,n)$ to $X^*(h',n')$.
On the other hand, one has two natural finite covers $\Delta^* \times C$ to $X(h,n)$ and $X(h',n')$. Using the lifting criterion for coverings, we see that
for any multiple $k$ of $n'$ the composition map
$$\xymatrix{
&&&
\Delta^* \times C \ar[d]
\\
\Delta^* \times C \ar[r]_{(t^k,z)}\ar@{.>}[urrr]^{\hF}
&
\Delta^* \times C \ar[r]
&
X^*(h,n) \ar[r]^{F}
&
X^*(h',n')
}$$
lifts as a holomorphic map $\hF$ through the finite cover $\Delta^* \times C \to X(h',n')$.
By assumption $F$ is a rational map, hence the closure of the graph of $\hF$ extends analytically
over the central fiber. This implies $\hF$ is also a rational map from $\Delta^*\times C$ to itself.
Since $C$ is not a rational curve, $\hF$ is necessarily holomorphic. This implies that $\hF$ descends to a holomorphic map from $X(h,n)$ to $X(h',n')$.
\end{proof}


\subsection{Proof of Theorems~A and~D}

We begin with Theorem~A.
\begin{proof}[Proof of Theorem~A]
Take $F: \mathbb{P}^2 \dashrightarrow \mathbb{P}^2$ a dominant rational map with $e < \la^2$. Suppose first $F$ admits a holomorphic model. Then Proposition~\ref{p:cartiert*} shows that both classes $\theta^*$ and $\theta_*$ are Cartier.
For the converse, suppose $\theta^*$ and $\theta_*$ are Cartier. By Theorem~\ref{t:contract}, either $F$ admits a holomorphic model, or $F$ preserves a fibration which is induced by $\theta^*$.
Suppose we are in the second case. By assumption, $F$ is not a skew product, hence the genus of the generic fiber is $\ge 1$. Following the notation of the previous section, we have $\la = e_B> e_\varpi$.

When the genus is at least two, then the restriction map from a generic fiber to its image is an analytic diffeomorphism.  Since the topological degree of the action on the base is $\la \ge 2$, the fibration is then isotrivial. In the elliptic case too, the fibration is isotrivial.
\begin{lem}\label{lem:isotrivial}
Suppose $\varpi: X \to B \simeq \P^1$ is an elliptic fibration on a projective surface,
$F: X \dashrightarrow X$ is a rational dominant map, and
$g: B \to B$ a holomorphic map such that $\varpi \circ F = g \circ \varpi$.
If $g$ has degree $\la\ge 2$ then the fibration is isotrivial.
\end{lem}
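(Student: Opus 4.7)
The plan is to follow the $j$-invariant function $J\colon B \to \mathbb{P}^1$ of the fibration: isotriviality amounts to $J$ being constant, and I will derive a contradiction from the assumption that $J$ is non-constant by comparing $J$ and $J\circ g$ through the classical modular polynomial.

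Concretely, for generic $b \in B$, both fibers $E_b$ and $E_{g(b)}$ are smooth elliptic curves, and the restriction $F|_{E_b}\colon E_b \to E_{g(b)}$ is a dominant morphism of degree $n := e_\varpi$. Since any dominant morphism between smooth projective curves of genus $1$ factors as a translation composed with an isogeny, $E_b$ and $E_{g(b)}$ are $n$-isogenous. By the classical theory of modular curves this is equivalent to
\begin{equation*}
\Phi_n\bigl(J(b), J(g(b))\bigr) = 0 \qquad \text{for every } b \in B,
\end{equation*}
where $\Phi_n \in \Z[X,Y]$ is the $n$-th classical modular polynomial.

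Consider now the resulting map $\Psi := (J, J\circ g) \colon \mathbb{P}^1 \to \mathbb{P}^1 \times \mathbb{P}^1$. Its image is contained in the irreducible algebraic curve $C_n := \{\Phi_n = 0\}$, which has bidegree $(\psi(n),\psi(n))$ with $\psi(n) := [\mathrm{SL}_2(\Z) : \Gamma_0(n)]$. If $J$ is non-constant then so is $\Psi$, and its image is then an irreducible subvariety of dimension $1$ inside the irreducible curve $C_n$, hence is all of $C_n$. Letting $k \ge 1$ be the degree of the induced map $\mathbb{P}^1 \to C_n$ and composing with the two coordinate projections $C_n \to \mathbb{P}^1$ (each of degree $\psi(n)$), I obtain
\begin{equation*}
\deg(J) = k\,\psi(n) \quad \text{and} \quad \deg(J\circ g) = k\,\psi(n).
\end{equation*}
On the other hand $\deg(J\circ g) = \deg(g)\cdot\deg(J) = \la\cdot \deg(J)$, forcing $\la = 1$ and contradicting the hypothesis $\la \ge 2$. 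Hence $J$ is constant and the fibration is isotrivial.

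The plan rests on two classical facts which I do not expect to cause any difficulty: that $n$-isogeny between elliptic curves is detected by the vanishing of $\Phi_n$ on their $j$-invariants, and that $\Phi_n$ is irreducible over $\C$ of bidegree $(\psi(n),\psi(n))$. The only point that genuinely needs to be checked is that $F|_{E_b}$ is of constant fiber-wise degree $n$ on a Zariski-open set of $b$, which is automatic for a dominant rational map preserving $\varpi$. Overall the main obstacle, if any, would be in handling small values of $n$ (where the modular curve $X_0(n)$ has genus zero and so admits non-constant maps from $\mathbb{P}^1$); but the bidegree comparison above bypasses this entirely, since it works uniformly in $n$.
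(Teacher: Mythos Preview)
Your argument is correct and takes a genuinely different route from the paper. One small imprecision is worth flagging: the classical modular polynomial $\Phi_n$ parameterizes \emph{cyclic} $n$-isogenies, so when $n=e_\varpi$ is not squarefree the relation $\Phi_n(J(b),J(g(b)))=0$ need not hold as stated. This is harmless: an arbitrary degree-$n$ isogeny factors as multiplication-by-$d$ followed by a cyclic isogeny of degree $m=n/d^2$, so the image of $\Psi$ lies in the finite union $\bigcup_{d^2\mid n}\{\Phi_{n/d^2}=0\}$. Since $\Psi(\mathbb{P}^1)$ is irreducible it lands in a single component $\{\Phi_m=0\}$, and because $\Phi_m(X,Y)=\Phi_m(Y,X)$ (dual isogeny) that component still has bidegree $(\psi(m),\psi(m))$. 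Your degree comparison then goes through verbatim with $m$ in place of $n$.

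The paper's proof is entirely different and surface-theoretic. It argues by contradiction: if $J$ is non-constant, its poles correspond to fibers with infinite local monodromy and form a finite set totally invariant under $g$. Fixing such a pole and passing to a relatively minimal model, the canonical bundle formula gives $K_X\equiv a\,[C_t]$; an estimate on the Jacobian divisor $JF$ along the totally invariant fiber then forces $a<0$, hence $\mathrm{kod}(X)=-\infty$. A short case analysis on the ruling (elliptic versus rational base) produces the contradiction.

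Your approach is more elementary and self-contained: it uses nothing about $X$ beyond the fact that generic fibers are isogenous elliptic curves, and reduces everything to the symmetry of the modular polynomial. The paper's argument, by contrast, stays within the toolkit (canonical divisors, the Jacobian formula, Kodaira's classification) already deployed in the surrounding sections, which is presumably why it was chosen there.
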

A proof is given below.

Now suppose $C$ is a singular fiber. By Lemma~\ref{lem:str-iso}, the fibration in $X\setminus C$ is locally isomorphic near $C$ to some $X^*(h,n)$ as described in Section~\ref{s:fibration}. We then replace each singular fiber of the fibration by its local quotient model $X(h,n)$. Thanks to Proposition~\ref{p:isotrivial}, the map $F$ is holomorphic in this model.
\end{proof}

\begin{proof}[Proof of Lemma~\ref{lem:isotrivial}]
There is a natural map $J: B \to \P^1$ mapping $t \in B$ to the $j$-invariant of the fiber
$C_t =\varpi^{-1}\{ t \}$.
This map is rational. We proceed by contradiction assuming that $J$ is non constant.
Consider the set of poles of $J$: it corresponds to fibers whose local monodromy is infinite, see~\cite[p.210]{BPHV}. This implies the set of poles of $J$ to be totally invariant.
In the sequel we fix $0 \in B$ such a pole, and assume the fibration is relatively minimal.
Since $0$ is totally invariant by $g$ which has degree $\la$, it is also a critical point, hence
$B$ is isomorphic to $\P^1$. Moreover the divisor $JF$ is at least $\la -1$ times $C_0$.

By~\cite[p.213]{BPHV} the canonical bundle is proportional to the class of a fiber $K_X = a [C_t]$.
From the numerical equality $(\la -1)[C_t] \le  JF = K_X - F^*K_X = a \, (1- \la)[C_t]$, we get that $a<0$.
In particular $\mathrm{kod}(X) = - \infty$.

If the surface $X$ is ruled over a non rational base, the base is elliptic and $X$ is birational to the product $\P^1\times E$ for some fixed elliptic curve. This shows that all fibers of $\varpi$ are isogeneous to $E$, hence $\varpi$ is isotrivial. This contradicts our standing assumption. Whence $X$ is rational, and in this case one has $K_X = - \frac1m [C_t]$ where $m$ is the multiplicity of the unique multiple fiber when it exists, and $m=1$ otherwise. We conclude that
$m =1 $ and as divisors $JF = (\la-1) C_0$. This would imply $g$ to have a single ramification point which is impossible.
\end{proof}

Before proving Theorem~D, we make some general remarks on holomorphic models.
Fix $F: \mathbb{P}^2 \dashrightarrow \mathbb{P}^2$ any dominant rational map.
Recall that 
%
a holomorphic model for $F$ is a model $\pi: X\to \mathbb{P}^2$ on which the induced map $F_X \= \pi^{-1} \circ F \circ \pi$ is holomorphic.
\begin{lem}\label{l:induct}
The set of all holomorphic models for $F$ forms an inductive set. In other words,  for any two holomorphic models $X, X'$, there exists a third one $X''$ which dominates both $X$ and $X'$.
\end{lem}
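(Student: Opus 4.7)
The plan is to take $X''$ to be the normalization of the closure of the graph of the birational map between $X$ and $X'$, and to exploit the holomorphicity of the two lifts of $F$ in order to produce a holomorphic lift on $X''$. I denote by $F_X: X \to X$ and $F_{X'}: X' \to X'$ the holomorphic lifts, and write $\phi \= \pi_{X'}^{-1} \circ \pi_X : X \dashrightarrow X'$ for the induced birational map. Since both $F_X$ and $F_{X'}$ are lifts of $F$, the intertwining relation $F_{X'} \circ \phi = \phi \circ F_X$ holds as an identity of rational maps from $X$ to $X'$.

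Let $\Gamma \subset X \times X'$ be the Zariski closure of the graph of $\phi$, endowed with its reduced structure. The key point I would verify is that the product morphism $(F_X,F_{X'}): X\times X' \to X\times X'$ preserves $\Gamma$: on the dense open subset of $\Gamma$ where $\phi \circ F_X$ and $F_{X'}\circ \phi$ are both regular one has $(F_X(x),F_{X'}(\phi(x))) = (F_X(x),\phi(F_X(x))) \in \Gamma$ by the intertwining relation, and the inclusion extends to all of $\Gamma$ by continuity. This yields a self-morphism $\Phi: \Gamma \to \Gamma$. I would then let $\nu: X'' \to \Gamma$ be the normalization; the two projections composed with $\nu$ give birational morphisms $X'' \to X$ and $X'' \to X'$, so that $X''$ is automatically a normal rational surface and a model of $\mathbb{P}^2$ dominating both $X$ and $X'$. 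The composite $\Phi \circ \nu: X'' \to \Gamma$ is dominant with normal source, so the universal property of normalization produces a unique morphism $F_{X''}: X'' \to X''$ satisfying $\nu \circ F_{X''} = \Phi \circ \nu$. A direct diagram chase using the compatibility of $\Phi$ with the two factor projections then shows that $F_{X''}$ coincides with the rational lift of $F$ to $X''$ through its model structure.

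The only genuine content in this argument is the invariance step: without the holomorphicity of \emph{both} $F_X$ and $F_{X'}$, the pair $(F_X,F_{X'})$ would not be a morphism on $X\times X'$, and one would have no reason for $\Gamma$ to be totally invariant. Once this invariance is in place, the remaining steps --- normalization and the universal property --- are purely formal.
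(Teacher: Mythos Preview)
Your argument is correct and follows the same strategy as the paper: take the graph of the birational map $X\dashrightarrow X'$ inside the product and observe that the product morphism $(F_X,F_{X'})$ restricts to it. The paper states this last point in one line without the density argument you spell out, and it does not pass to the normalization; your added normalization step is a genuine refinement, since models are by definition normal surfaces and the raw graph need not be.
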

\begin{proof}
Define $X''$ to be the graph of the map $\varpi \=  \pi^{-1} \circ \pi': X ' \to X$ in $X' \times X$.
Then we have two natural projection maps $\pi_1: X'' \to X'$ and $\pi_2: X '' \to X$ that are bimeromorphic, hence $X''$ dominates both $X$ and $X'$.
On $X''$, the map induced by $F$ is the restriction of the product map $(F_{X'}, F_X)$ on $X' \times X$.
It is thus holomorphic.
\end{proof}
~

 \begin{proof}[Proof of Theorem~D]
Suppose first that  $\theta^*$ is not associated to a fibration.
 Take any holomorphic model $X$ such that the dimension of $\NS(X)$ is minimal among all holomorphic models. 
Pick any other holomorphic model $X'$. By Lemma~\ref{l:induct}, there exists a third holomorphic model $X''$ such that the natural maps $\varpi: X'' \to X$ and $\varpi' : X'' \to X'$ are both regular.  Apply Theorem~\ref{t:contract} to $X''$.  By our working assumption on $\theta^*$ we may contract all curves orthogonal to $\theta^*$ in $\NS(X'')$. This yields a holomorphic model $X_0$ for $F$.
By Proposition~\ref{p:cartiert*}, the class $\theta^*$ is Cartier and determined in $X$, hence $\theta^*$ is orthogonal to all $\varpi$-exceptional curves. This means the map $X \to X_0$ is regular, hence by~\eqref{e:small-decomp} $\dim \NS(X_0 )\le \dim \NS(X)$. But $X$ was chosen with $\dim \NS(X)$ minimal, so that $X = X_0$.
On the other hand, $\theta^*$ is also orthogonal to all $\varpi'$-exceptional curves 
hence $X' \to X_0 = X$ is regular. This concludes the proof.

 \medskip
 
 Suppose now that $\theta^*$ is  associated to a fibration. 
As before pick a holomorphic model $X$ of $F$. Then there exists a fibration $\varpi : X \to \mathbb{P}^1$ which is determined by $\theta^*$ and preserved by $F$. The induced map $g$ on the base has topological degree $e_B  =\la \ge 2$ by assumption.

The set of reducible fibers is finite and totally invariant by $g$. Replacing $F$ by an iterate, we may assume that any prime divisor $C$ of a reducible fiber is also totally invariant so that $F^* C = \la C$. Whence $e\times  C^2 = (F^* C)^2 =\la^2 \times C^2$, which forces $C^2 =0$
as $e < \la^2$. In other words, $\varpi$ admits no reducible fiber. 

Now suppose $F$ admits a holomorphic model $X'$ dominating $X$. The same argument proves that the lift of the fibration to $X'$ has no reducible fiber either which implies $X'$ to be isomorphic to $X$. 
By Lemma~\ref{l:induct}, we conclude that $F$ admits a unique holomorphic model.
This is a stronger statement than the conclusion of Theorem~D.
\end{proof}


\subsection{Skew products}\label{s:skew}
Recall that a skew product is a rational map of $\mathbb{P}^1 \times \mathbb{P}^1$ which preserves
the rational fibration induced by the projection onto the first factor. Theorem~A fails for skew product as we shall explain now.

Let us introduce some notation. Let $\varpi:  \mathbb{P}^1 \times \mathbb{P}^1 \to \mathbb{P}^1$ be the  projection onto the base factor. For $x \in \mathbb{P}^1$, we denote by $L_x$ the fiber $\varpi^{-1}\{ x \}$.
If $F$ is a skew product, we let $g: \mathbb{P}^1 \to \mathbb{P}^1$ be the induced map on the base, and $F_x: L_x \to L_{g(x)}$ the natural map on the fiber.
As in Section~\ref{s:fibration}, write  $e_B$ for the topological degree of $g$ and $e_\varpi$ for the topological degree of $F_x$ for a generic $x$. We let $\mathrm{R}$ be the finite set of points $x \in \mathbb{P}^1$ such that  $\deg(F_x) < e_\varpi$.

 Suppose $e_B > e_\varpi$. Then $e < \la^2$ and the class $\theta^*$ is determined by a fiber $L_x$.
Suppose moreover that there exists no contracted curves, i.e. for any $x \in \mathrm{R}$ the map $F_x$ has degree $\ge 1$. Then by~\cite[Corollary~2.6]{deggrowth}, the space of Cartier classes determined in $\mathbb{P}^1 \times \mathbb{P}^1$ is preserved by $F_*$. We conclude that $\theta_*$ is Cartier and determined in  $\mathbb{P}^1 \times \mathbb{P}^1$. Finally assume $\mathrm{R}$ is not totally invariant by $g$. Then $R$ has an infinite backward orbit so that infinitely many points are eventually mapped by $F$ to a point of indeterminacy lying over $R$. Whence $F$ admits no holomorphic model.

A concrete example is given by $F(x,y)=(x^3+1, \frac{xy^2 -y}{y^2 -x})$ in $\C^2$. Then $e_B = 3 > e_\varpi = 2$, $g(x) = x^3 +1$ and $\mathrm{R} = \{ 0 , 1 , j, j^2\}$ with $j$ a primitive $3$-root of unity. This shows Theorem~A does not hold for skew products.


\end{document}